\newcommand{\PP}{\mathbb{P}}
\newcommand{\CC}{\mathbb{C}}
\theoremstyle{plain}
\newtheorem{theorem}{Theorem}[section] 
\newtheorem{proposition}[theorem]{Proposition}
\newtheorem{lemma}[theorem]{Lemma}
\newtheorem{corollary}[theorem]{Corollary}
\theoremstyle{definition}
\newtheorem{definition}[theorem]{Definition} 
\newtheorem{example}[theorem]{Example}
\theoremstyle{remark}
\newtheorem*{remark}{Remark}
\newcommand{\bl}{\boldsymbol{\ell}}
\newcommand{\bt}{\boldsymbol{t}}
\newcommand{\bs}{\boldsymbol{s}}
\newcommand{\bu}{\boldsymbol{u}}
\newcommand{\bv}{\boldsymbol{v}}
\DeclareMathOperator{\PGL}{PGL}
\newcommand{\new}[1]{\textcolor{black}{#1}}
\newcommand{\minorideal}[1]{I_{#1}}
\newcommand{\linemultiviewvariety}[1]{\mathcal{L}_{#1}}
\newcommand{\IC}{\minorideal{\mathcal{C}}}
\newcommand{\ICarg}[2]{\minorideal{\mathcal{C}(#1, \ldots, #2)} }
\newcommand{\ICt}[2]{\ICarg{t_{#1}}{t_{#2}}}
\newcommand{\ICtm}{\minorideal{\mathcal{C}(\bt)} }
\newcommand{\ICtbarm}{\minorideal{\mathcal{C}(\bs)} }
\newcommand{\GCt}[2]{G_{\mathcal{C}(t_{#1}, \ldots ,t_{#2})}}
\newcommand{\LC}{\linemultiviewvariety{\mathcal{C}}}
\newcommand{\collICT}{\widetilde{I}_{\mathcal{C}(\bv)}}
\newcommand{\speccollICT}{\widetilde{I}_{\mathcal{C}(\bu)}}
\DeclareMathOperator{\GL}{GL}
\definecolor{codedarkgreen}{RGB}{51, 133, 4}
\definecolor{codemaroon}{RGB}{133, 5, 63}
\definecolor{codeteal}{RGB}{0, 128, 96}
\lstdefinelanguage{Macaulay2}{
basicstyle=\small\ttfamily,
alsoletter=",
classoffset=1,
keywords={sub,join,matrix,minors,gb,transpose,det,ideal,apply,subsets,ker,gens,fold,flatten,entries},
keywordstyle={\color{blue}},
classoffset=2,
morekeywords={from, to, list},
keywordstyle={\color{codemaroon}},
classoffset=3,
morekeywords={QQ},
keywordstyle={\color{codedarkgreen}},
classoffset=4,
morekeywords={MonomialOrder},
keywordstyle={\color{codeteal}},
xleftmargin=1.5cm,
xrightmargin=1em,
columns=fullflexible,
keepspaces=true,
stepnumber=1,
numbers=none,
captionpos=b,
showspaces=false,
frame=none
}
\title{Line Multiview Ideals}
\date{}
\author[1]{Paul Breiding}
\author[2]{Timothy Duff}
\author[3]{Lukas Gustafsson}
\author[3]{Felix Rydell}
\author[1,4]{Elima Shehu}
\affil[1]{{\small Universit\"at Osnabr\"uck, Osnabr\"uck, Germany}}
\affil[2]{{\small University of Washington, Seattle, USA}}
\affil[3]{{\small KTH Royal Institute of Technology, Stockholm, Sweden}}
\affil[4]{{\small Max Planck Institute for Mathematics in the Sciences, Leipzig, Germany}}
\begin{document}

\maketitle
\abstract{We study the following problem in computer vision from the perspective of algebraic geometry: Using $m$ pinhole cameras we take $m$ pictures of a line in $\mathbb P^3$. This produces $m$ lines in $\mathbb P^2$ and the question is which $m$-tuples of lines can arise that way. We are interested in polynomial equations and therefore study the complex Zariski closure of all such tuples of lines. The resulting algebraic variety is a subvariety of $(\mathbb P^2)^m$ and is called line multiview variety. In this article, we study its ideal. We show that for generic cameras the ideal is generated by $3\times 3$-minors of a specific matrix. We also compute Gr\"obner bases and discuss to what extent our results carry over to the non-generic case.}

\medskip
\section{Introduction and Main Results}\label{sec:background}
A \new{\emph{pinhole camera}} is a projective linear map
$$
\mathbb P^3 \dashrightarrow \mathbb P^2,\quad x\mapsto Cx,$$
where $C\in\mathbb C^{3\times 4}$ \new{is} rank 3. The symbol $\dashrightarrow$ indicates that the map is not defined everywhere; it is not defined at the \emph{camera center} $c=\ker C$. 

Suppose now that $\mathcal C=(C_1,\ldots, C_m)\in(\mathbb C^{3\times 4})^m$ is an arrangement of $m$ cameras. Throughout this article, we assume that the centers are distinct and that there are at least two cameras.
We are interested in the tuples of lines that arise by taking pictures of a common line $L\subset\mathbb P^3$ using the $m$ cameras in $\mathcal C$. To make this precise, let us denote by $\mathbb{G}$ the Grassmannian of lines in $\PP^3$. 
\new{
A line in the image plane $\PP^2$ is represented by a linear equation~$x^{T}\ell=0$ for some $\ell \in \PP^2.$ 
For this reason, it is convenient to also denote the \emph{dual} of the image plane by $\PP^2$.}
We consider the \textit{joint camera map}
\begin{equation}\label{def_camera_map}
    \Upsilon_{\mathcal C}: \mathbb G \dashrightarrow (\mathbb P^2)^m,\quad
    L\mapsto (\ell_1,\ldots,\ell_m),
\end{equation}
where $\ell_i$ is the linear equation for the $i$-th image line $C_i\cdot L$.
\new{More explicitly,} if $L$ is spanned by two points $a,b\in \PP^3$  then $\ell_i=(C_ia)\times (C_ib)$, where $\times$ is the cross product. 
The Zariski closure of the image of this map is the \emph{line multiview variety} of $\mathcal{C},$
$$\LC := \overline{\Upsilon_{\mathcal C}(\mathbb G)}.$$
The \emph{ideal} $I(\LC)$ of $\LC$ \new{is the ideal of all polynomials that vanish on $\LC$, or equivalently of all polynomials that vanish on $\Upsilon_{\mathcal C}(\mathbb G)$}. The goal of this paper is to study the ideal $I(\LC)$\new{, solving} the implicitization problem~\cite[\S 3.3]{CLO15} for line multiview varieties.

Multiview varieties are fundamental objects in algebraic vision, a field of research that applies the tools of algebraic geometry and neighboring fields to topics in computer vision.
So far, most attention has been paid to multiview varieties that model 3D scenes involving points.
These point multiview varieties and the equations defining them were originally studied in the computer vision literature under various guises (eg.~the \emph{natural descriptor}~\cite{heyden1997algebraic}, the \emph{joint image}~\cite{Tri95,DBLP:conf/iccv/TragerHP15}, or via the Grassmann-Cayley algebra in~\cite{DBLP:conf/iccv/FaugerasM95}.)
From the point of view of algebraic vision, the results of~\cite{AST13,agarwal2019ideals} characterize the \emph{vanishing ideal} of the point multiview variety of a suitably generic camera arrangement.
In particular, these results solve the implicitization problem for point multiview varieties. 
While the point multiview variety has received much attention, the line multiview variety has less so. The study of line multiview varieties was initiated in \cite{line-multiview}, which focused on the geometric structure of $\LC$. Our paper extends this study, focusing on the \emph{line multiview ideal} $I(\LC)$.

In \cite[Theorem 2.1]{line-multiview} it is shown that $\LC$ is irreducible and $\dim \LC=\new{\dim \mathbb G}=4$. 
The line multiview ideal belongs to the polynomial ring
\begin{equation}\label{def_R}
R:=\mathbb C[\ell_{i,j} \mid 1\leq i\leq 3,1\leq j\leq m].\end{equation}
We write $\ell_{k}=(\ell_{i,k})_{i=1}^3$, and denote by $M(\ell)$ the matrix
\begin{equation}\label{eq:Ml}
    M(\ell) = \begin{bmatrix}
    C_1^T \ell_1 & \cdots & C_m^T \ell_m
    \end{bmatrix} \in R^{4\times m}.
\end{equation}
By \cite[Theorem 2.5]{line-multiview}, we have
$\LC= \left\{\ell \in (\mathbb P^2)^m \mid \mathrm{rank}\ M(\ell) \leq 2\right\}$
if and only if no four camera centers of the $C_i$ are collinear. We refer to this as the \emph{generic case}. If four camera centers are collinear we have $\LC\subsetneq \left\{\ell \in (\mathbb P^2)^m \mid \mathrm{rank}\ M(\ell) \leq 2\right\}$ and the remaining equations to describe $\LC$ are discussed in \cite[Theorem 2.6]{line-multiview} and treated with greater detail in \Cref{s: Set-Theory} of this article. The geometric idea behind the definition of~$M(\ell)$ is that $C_i^T \ell_i$ defines the plane in $\mathbb P^3$ projecting to $\ell_i$ under $C_i$ (called \emph{back-projected plane}), and there is a line in the intersection of all back-projected planes if and only if the rank of $M(\ell)$ is at most 2. \new{The geometry is illustrated in~\Cref{fig:three-view}.}

\begin{figure}
    \centering
\begin{tikzpicture}[scale=0.35]
\draw[fill] (1,5) circle (4pt) node[below] {${c_1}$};
\draw[fill] (11.5,1) circle (4pt) node[below] {${c_2}$};
\draw[fill] (22,5) circle (4pt) node[below] {${c_3}$};
\draw (1,8) -- (1,14);
\draw (1,8) -- (6,5);
\draw (6,5) -- (6,11);
\draw (1,14) -- (6,11)  node[above=8] {${H_1}$};
\draw (9,3) -- (9,9);
\draw (9,3) -- (14,3);
\draw (14,3) -- (14,9);
\draw (14,9) -- (9,9) node[above=8, midway] {${H_2}$};
\draw (17,4) -- (17,10);
\draw (17,4) -- (22,7);
\draw (22,7) -- (22,13);
\draw (17,10) -- (22,13) node[left=35] {${H_3}$};

\fill[ teal!10] (1,8) -- (6,5) -- (6,11) -- (1,14) -- cycle;
\fill[ teal!10] (9,3) -- (14,3) -- (14,9) -- (9,9) -- cycle;
\fill[ teal!10] (17,4) -- (22,7) -- (22,13) -- (17,10) -- cycle;

\draw[line width=1.8pt, violet] (2.6,8.5) -- (4.4,8) node[black, above, midway, font=\bfseries] {${\ell_1}$};
\draw[line width=1.8pt,  violet] (10.5,4.4) -- (12.7,5) node[black, above, midway, black!=\bfseries] {${\ell_2}$};
\draw[line width=1.8pt,  violet] (18.5,8) -- (20.5,9) node[black, above, midway, font=\bfseries] {${\ell_3}$};
\draw (1,5) -- (2.6,8.5);
\draw[dotted] (2.6,8.5) -- (4.2, 12);
\draw (4.2,12) -- (7,18);
\draw (1,5) -- (4.4,8);
\draw[dotted] (4.4,8) -- (5.99375, 9.40625);
\draw (5.99375, 9.40625) -- (8.4375, 11.5625);
\draw[dotted] (8.4375, 11.5625) -- (17,19);
\draw (11.5,1) -- (10.5,4.4);
\draw[dotted] (10.5,4.4) -- (9,9);
\draw (9,9) -- (7,18);
\draw (11.5,1) -- (12.7,5);
\draw[dotted] (12.7,5) -- (14,9);
\draw(17, 19) -- (14,9);
\draw (17.0063, 9.1375) -- (14.6,11.2);
\draw[dotted] (17.0063, 9.1375) -- (18.5,8);
\draw[dotted] (7,18) -- (14.6,11.2);
\draw (18.5,8) -- (22,5);
\draw (20.5,9) -- (22,5);
\draw[dotted] (20.5,9) -- (19.5625, 11.5);
\draw ((19.5625, 11.5) -- (17,19);

\draw[line width=1.5pt, violet] (7,18) -- (17,19) node[black, above] {${L}$};
\end{tikzpicture}
    \caption{Illustration of projecting three images of a line in three-dimensional space. The purple line, denoted by $L$, represents the actual line in 3D space. The three small purple lines represent three images of the line $L$, captured from different perspectives $c_i$. Additionally, transparent planes are shown, representing the backprojected planes $H_i$ defined via $C^T_i\ell_i$ corresponding to the captured images. This visualization demonstrates the relationship between the original 3D line, its multiple images, and the corresponding backprojected planes.}
    \label{fig:three-view}
\end{figure}

To any camera arrangement, we may also associate the ideal
$$\IC := \left\langle 3\times3\text{-minors of } M(\ell)\right\rangle.$$
Our first main result of this paper is the following.
\begin{theorem}\label{thm:main_ideal} 
Let $\mathcal{C}$ be an arrangement of $m$ cameras such that no four cameras are collinear. Then, the vanishing ideal of $\LC$ is generated by the $3\times3$-minors of $M(\ell)$; i.e.,
\[I(\LC) = \IC.\]
\end{theorem}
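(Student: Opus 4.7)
The inclusion $\IC \subseteq I(\LC)$ is immediate, since the rank condition $\mathrm{rank}\ M(\ell) \leq 2$ holds on every element of the image of $\Upsilon_{\mathcal C}$. By \cite[Theorem 2.5]{line-multiview}, the assumption that no four camera centers are collinear yields $V(\IC) = \LC$ set-theoretically. Combined with the irreducibility of $\LC$ from \cite[Theorem 2.1]{line-multiview}, this gives $I(\LC) = \sqrt{\IC}$, so the theorem reduces to showing that $\IC$ is radical.

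My plan is to establish radicality by exhibiting a monomial order $\prec$ on $R$ under which the $3\times 3$ minors of $M(\ell)$ form a Gröbner basis whose initial ideal is squarefree; radicality of $\IC$ then follows from the standard fact that an ideal with a squarefree initial ideal is radical (see e.g.~\cite{CLO15}). A natural candidate is a block lexicographic order in which the variable blocks $\ell_{\ast,1}, \ldots, \ell_{\ast,m}$ are separated by camera index, with a lex refinement inside each block. Every $3 \times 3$ minor of $M(\ell)$ is trilinear, being linear in each of the three blocks $\ell_{\ast,j_1}, \ell_{\ast,j_2}, \ell_{\ast,j_3}$ indexing its three chosen columns. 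Consequently the leading monomial of such a minor is necessarily a product of three distinct variables drawn from three different blocks, hence squarefree by construction.

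The hard part is verifying that the $3 \times 3$ minors actually constitute a Gröbner basis. One can attempt to check Buchberger's criterion for all pairs of minors, or equivalently show that the ideal generated by the claimed leading terms has the same multigraded Hilbert series as $R/I(\LC)$, whose dimension is $4$ and whose degree can be extracted from the description in~\cite{line-multiview}. The difficulty is that the columns of $M(\ell)$ are not independent linear forms but take values in the specific $3$-dimensional subspaces $\mathrm{im}\,C_i^T \subset \CC^4$, so the syzygies among our minors are richer than the Plücker relations of a generic $4 \times m$ matrix. The non-collinearity hypothesis on the camera centers is precisely what should ensure enough transversality between these subspaces for the standard straightening-law-style reductions to close up.

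If direct Buchberger computations prove unwieldy, a complementary strategy is to exploit the $\PGL_4$-equivariance of $I(\LC)$: apply a coordinate change on $\PP^3$ to put $\mathcal{C}$ into a convenient normal form, verify the Gröbner basis and squarefreeness property at one sufficiently generic arrangement, and then propagate to all arrangements with no four centers collinear via semicontinuity of Gröbner bases in the resulting flat family over the parameter space of cameras.
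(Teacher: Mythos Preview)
Your plan has a fatal flaw: the $3\times 3$ minors of $M(\ell)$ do \emph{not} form a Gr\"obner basis for $\IC$ with respect to \emph{any} monomial order. The paper states this explicitly (just before Theorem~\ref{thm:main2}), and one can verify it already for $m=3$ using the Gr\"obner basis detection algorithm of Gritzmann--Sturmfels. So the whole program of exhibiting a term order under which the minors are a Gr\"obner basis with squarefree initial ideal cannot succeed, and your backup sketch via equivariance and ``semicontinuity of Gr\"obner bases'' does not repair this; it would at best let you transport a Gr\"obner basis from one arrangement to another, but you never had one to begin with.

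There is also a gap earlier in your reduction. The equality $V(\IC)=\LC$ from \cite[Theorem~2.5]{line-multiview} is a statement in $(\PP^2)^m$, and in the multiprojective setting the Nullstellensatz only gives that $I(\LC)$ is the \emph{saturation} of $\sqrt{\IC}$ by the irrelevant ideal $\prod_i \langle \ell_{1,i},\ell_{2,i},\ell_{3,i}\rangle$, not that $I(\LC)=\sqrt{\IC}$ outright. The paper handles both issues differently: it first shows that $\IC$ has the expected codimension $2m-4$, so that the general Cohen--Macaulay theorem for determinantal ideals applies; then it uses the Jacobian criterion (Serre's $R_0$ together with Cohen--Macaulayness) to deduce that $R/\IC$ is reduced; finally, a separate density argument (Lemma~\ref{le: XC}) shows that $\IC$ is already saturated. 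None of these steps is replaced by anything in your outline.
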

We prove this theorem at the end of \Cref{sec:generators}.

Next, we study \emph{Gr\"obner bases} for the ideal $\IC$. Recall that a Gröbner basis is a specific type of generating set for an ideal. \new{A Gr\"obner basis} exist for every \emph{monomial order}; see, e.g., \cite[\S 2.5 Corollary 6]{CLO15}. A monomial order defines the notion of \emph{leading term} of a polynomial. The definition of a Gröbner basis $G$ is that the leading term of any polynomial in $I$ is divisible by one of the leading terms of an element in $G$. 

There are $N:=4\binom{m}{3}$ $3\times3$-minors of the $4\times m$-matrix $M(\ell)$.  Let us denote them by~$m_1,\ldots,m_N$, so that, by \Cref{thm:main_ideal},
$
    \IC = \langle m_1,\dots,m_{N}\rangle.
$
The first question is to decide whether $B:=\{m_1,\ldots,m_N\}$ is a Gröbner basis for \emph{some} monomial order or not. For this, we implemented the \emph{Gröbner basis detection \new{a}lgorithm} in \texttt{Macaulay2} \cite{M2} \new{as a part of the package \texttt{SagbiGbDetection}~\cite{SagbiGbDetectionSource}}. This algorithm, first described in~\cite{GriStu93} (see also~\cite[Ch.\ 3]{Stu96}), consists of two main steps. The first step involves polyhedral computations: we compute the Newton polytope $\mathrm{Newt}(B):= \mathrm{Newt}(m_1\cdots m_N)\subseteq \mathbb{R}^{3m}$, collect all vertices of $\mathrm{Newt}(B)$ whose normal cones intersect the positive orthant, and determine a \emph{weight order} from each of these cones. The second step then uses Buchberger's criterion to check if $B$ is a Gröbner basis with respect to each weight order. In our case, already for $m = 3$ the minors do not form a Gröbner basis for \emph{any} monomial order.

This opens the follow-up question: which polynomials in the ideal $\IC$ \emph{do} form a Gr\"obner basis? We discuss this in \Cref{sec:groebner}. In our study we restrict ourselves to specific monomial orders. Computing Gr\"obner bases for $\IC$ for \new{every} ordering remains an open problem. We emphasize that for the point multiview variety, Aholt, Sturmfels, and Thomas \new{give} a \emph{universal Gr\"obner basis}; that is, a subset of polynomials that is a Gr\"obner basis for \new{every} monomial order; see \cite[Theorem 2.1]{AST13}. For $m\ge 4,$ the structure of this universal Gr\"{o}bner basis is completely determined by its restrictions to subsets of four cameras, whose elements correspond to the $2,3,$ and $4$-view tensors of multiview geometry \cite[Ch.~17]{HZ04}.
Intriguingly, in our case, we obtain Gr\"obner bases that are determined by its restrictions to subsets of cameras of size \emph{five} rather than four. For $m=2,3,4$ one can compute  Gr\"obner basis of $\IC$ using \texttt{Macaulay2}. For $m\geq 5$, we have the following theorem.
\begin{theorem}\label{thm:main2}
Suppose that $m\geq 5$ and all camera matrices are of the form
$$C_i = \begin{bmatrix}
1 & 0 & 0 & s_{1,i } \\
0 & 1 & 0 & s_{2,i } \\
0 & 0 & 1 & s_{3,i } 
\end{bmatrix}$$
and that $\mathbf s=(s_{j,i})_{1\leq j\leq 3, 1\leq i\leq m}$ is generic. Then, the Gr\"obner basis $G_m$ for the GRevLex order consists of polynomials that are supported on at most five cameras. More specifically, 
$$G_m = \bigcup_{\sigma\in \binom{[m]}{5}} G_\sigma,$$ 
where $G_\sigma$ is the Gröbner basis with respect to the GrevLex order of the line multiview ideal involving only the cameras with indices in $\sigma$.
\end{theorem}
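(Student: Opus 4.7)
Proof plan: I would proceed by induction on $m \geq 5$, using Buchberger's criterion as the main technical tool. The base case $m = 5$ is tautological, since $\binom{[5]}{5} = \{[5]\}$ forces $G_m = G_{[5]}$.

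For the inductive step, two preliminary facts are needed. First, an \emph{invariance lemma}: for every $\tau \subseteq [m]$ with $|\tau| \geq 2$, one has $\IC \cap R_\tau = I_{\mathcal{C}(\tau)}$, where $R_\tau = \mathbb{C}[\ell_{i,j} : j \in \tau]$. This holds because the coordinate projection $\pi_\tau : (\mathbb{P}^2)^m \to (\mathbb{P}^2)^\tau$ satisfies $\pi_\tau \circ \Upsilon_\mathcal{C} = \Upsilon_{\mathcal{C}(\tau)}$, so it maps $\LC$ dominantly onto $\mathcal{L}_{\mathcal{C}(\tau)}$; vanishing ideals then correspond to subring intersection. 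In particular, each $G_\sigma$ lies in $\IC$. Second, \emph{generation}: by \Cref{thm:main_ideal}, $\IC$ is generated by the $3 \times 3$-minors of $M(\ell)$, each involving only three cameras, hence lying in $I_{\mathcal{C}(\sigma)}$ for any 5-subset $\sigma$ containing those cameras. Therefore $\mathcal{G} := \bigcup_{\sigma \in \binom{[m]}{5}} G_\sigma$ generates $\IC$.

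Now I verify Buchberger's criterion for $\mathcal{G}$. Take $f \in G_\sigma$, $g \in G_\tau$, and set $\rho = \sigma \cup \tau$. If $\sigma = \tau$, then $S(f,g)$ reduces via $G_\sigma$ itself. If $|\rho| \leq 5$, enlarge $\rho$ to a 5-subset $\sigma'$; both $f, g \in I_{\mathcal{C}(\sigma')}$ by the invariance lemma, so $G_{\sigma'} \subseteq \mathcal{G}$ reduces $S(f,g)$ to zero. If $5 < |\rho| < m$, apply the inductive hypothesis to the $|\rho|$-camera arrangement $\mathcal{C}(\rho)$: its GRevLex Gröbner basis is $\bigcup_{\sigma' \in \binom{\rho}{5}} G_{\sigma'} \subseteq \mathcal{G}$, and reduces $S(f,g) \in I_{\mathcal{C}(\rho)}$ to zero. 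Finally, when $|\rho| = m$, we have $|\sigma \cap \tau| = 10 - m$, so for $m \geq 10$ the sets $\sigma, \tau$ are disjoint; the leading monomials of $f, g$ then involve disjoint variable sets and are coprime, so Buchberger's first criterion gives $S(f,g) \to 0$.

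The main obstacle is the remaining case $|\rho| = m$ with $m \in \{6, 7, 8, 9\}$, where $\sigma$ and $\tau$ overlap in $10 - m$ indices but their union fills all of $[m]$, so that the induction does not close and the leading monomials may share variables. For these finitely many values of $m$ the claim can be verified directly by a \texttt{Macaulay2} computation of the reduced GRevLex Gröbner basis of $\IC$, checking that every element is supported on at most five cameras. A more conceptual route would be to write down the GRevLex leading monomials of the $3 \times 3$-minors of $M(\ell)$ under the assumed block form of the cameras and argue that variables indexed by $\sigma \cap \tau$ never appear simultaneously in the leading terms of $f$ and $g$, reducing the remaining cases to Buchberger's first criterion as well.
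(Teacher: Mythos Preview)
Your Buchberger-criterion reduction is essentially the same skeleton the paper uses, and the induction on $m$, the invariance lemma, the generation step, and the coprimality trick for $|\sigma\cup\tau|=m$ with $m\ge 10$ are all correct. The paper's proof of~\Cref{thm:GB} is precisely this argument: any S-pair involves at most $10$ cameras, so it suffices to verify the claim for $m\le 10$, and then Buchberger closes the induction for $m\ge 11$.

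The difference, and the point where your proposal has a genuine gap, is in how the finitely many small cases $m\in\{6,7,8,9\}$ are discharged. You propose a direct \texttt{Macaulay2} computation of the GRevLex Gr\"obner basis of $\IC$ for these $m$; but $\mathbf s$ is \emph{generic}, not given, so a numerical computation with random $\mathbf s$ is not a proof, and a symbolic computation over $\CC(\mathbf s)$ is exactly what you have to set up carefully. Your alternative ``more conceptual route'' via coprimality of leading terms does not work: when $\sigma\cap\tau\neq\emptyset$, the leading monomials of elements of $G_\sigma$ and $G_\tau$ can and do share variables (the Gr\"obner basis contains elements of degree up to $7$ supported on five cameras, and their leading monomials overlap nontrivially).

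The paper closes this gap by lifting the whole argument to a \emph{symbolic} setting: it introduces the indeterminate-translation ideal $\ICtm\subset\CC[\bl,\bt]$, computes the reduced Gr\"obner basis $G_m$ with respect to the product of GRevLex orders on $\CC[\bl]$ and $\CC[\bt]$ for $3\le m\le 10$ once and for all (\Cref{lem_5}), and reads off the explicit $\bt$-polynomials occurring as leading $\bl$-coefficients (\Cref{prop:GB}). The genericity of $\mathbf s$ is then \emph{defined} (\Cref{def:center-generic}) to mean that none of these finitely many polynomials vanish at $\mathbf s$, so that specialization $\bt\mapsto\mathbf s$ preserves the Gr\"obner basis property (\Cref{thm:gb-LMI}). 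This is the missing ingredient in your proposal: to make the computation for small $m$ rigorous and simultaneously pin down what ``generic'' means, you must work with indeterminate camera parameters and then specialize.
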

\begin{proof}
This follows from \Cref{thm:GB} and \Cref{thm:gb-LMI} below. In fact, \Cref{thm:gb-LMI} defines explicitly what it means for $\mathbf s$ to be generic.
\end{proof}
The assumption on the shape of camera matrices in \Cref{thm:main2} may appear to be restrictive. However, as we observe in~\Cref{sec:group} that the group $\mathrm{PGL}_4 \times \mathrm{PGL}_3^m $ which acts on camera arrangements by
\begin{align}\label{def_group_action}
h\cdot (C_1, \ldots, C_m) = (H_1 C_1 H^{-1}, \ldots, H_m C_m H^{-1}), \text{ where } h=(H, H_1, \ldots, H_m),
\end{align}
also acts on line multiview ideals as 
$
\mathcal{L}_{h\cdot \mathcal{C}} =L_h(\LC) 
$
or, equivalently,
$
 I(\mathcal{L}_{h\cdot \mathcal{C}})  = L_h^{-1} (I(\LC))$, where $L_h$ is defined by $L_h(\ell_i)= H_i^{-T} \ell_i$; see \Cref{prop: group action}. Any camera arrangement $\mathcal C$ can be transformed into an arrangement of the form specified in \Cref{thm:main2} by a suitable choice of $h\in \mathrm{PGL}_4 \times \mathrm{PGL}_3^m$. Moreover, one can find $h$ such that $h\cdot \mathcal C$ is general in the sense of \Cref{thm:main2}, if and only if no three camera centers in $\mathcal C$ are collinear. We prove this fact in \Cref{prop:when_is_transport_possible}.
 The group action can then be exploited to solve computational problems for line multiview ideals.
 For example, consider the problem of ideal membership. Suppose that we have $f\in R$ and want to decide whether $f\in I(\LC)$. Since $f\in I(\LC)$, if and only if $L_h^{-1}(f)\in L_h^{-1}(I(\LC))= I(\mathcal{L}_{h\cdot \mathcal{C}})$, we can use a Gröbner basis of the latter for the division algorithm from \cite[\S 2.6]{CLO15}. 

Finally, in the last part of the paper, Sections \ref{s: Set-Theory} and \ref{s: GrobColl}, we prove variants of \Cref{thm:main_ideal} and \Cref{thm:main2} for the case when \emph{all}  cameras are collinear. More specifically, in \Cref{s: Set-Theory}, we give an explicit set-theoretic description of line multiview varieties with arbitrary camera arrangements. This is an improvement over the treatment in \cite{line-multiview}, where the polynomial equations are described via elimination. This includes cases when there are 4 collinear cameras or when all cameras are collinear. \new{Note that we use the term \emph{collinear cameras} to refer to cameras that have collinear centers.} Next, in \Cref{s: GrobColl} we adapt the results from \Cref{s: Set-Theory} to the ideal-theoretic methods of \Cref{sec:groebner}. We produce a Gröbner basis for the multiview ideal for an arrangement of $m \geq 4$ collinear cameras. Notably, this Gröbner basis is, analogously to the generic case discussed above, determined by its restrictions to subsets of four cameras.

\subsection*{Acknowledgements} The research of Elima Shehu and Paul Breiding was funded by the Deutsche Forschungsgemeinschaft (DFG, German Research Foundation), Projektnummer 445466444. Tim Duff acknowledges support from an NSF Mathematical Sciences Postdoctoral Research Fellowship (DMS-2103310). Felix Rydell was supported by the Knut and Alice Wallenberg Foundation within their WASP (Wallenberg AI, Autonomous Systems and Software Program) AI/Math initiative. Lukas Gustafsson was supported by the VR grant [NT:2018-03688]. 

Part of this work was completed while the authors visited the Czech Institute of Informatics, Robotics, and Cybernetics as part of the Intelligent Machine Perception Project. We thank Tomas Pajdla for the invitation.

\medskip
\section{The Line Multiview Ideal For Generic Cameras}\label{sec:generators}
The goal of this section is to prove \Cref{thm:main_ideal}. For this, we introduce the \emph{cone} over the line multiview variety
\begin{equation}\label{def_cone_L_C}
\widehat{\mathcal L_{\mathcal C}} := \{\ell \in (\mathbb C^3)^m \mid \operatorname{rank} M(\ell) \leq 2\}.
\end{equation}
The key step for proving \Cref{thm:main_ideal} is to prove the following result. Recall that we assume all centers of an arrangement are distinct and that $m\geq 2$.
\begin{proposition}\label{thm:almost_main_ideal}
If no four cameras are collinear, then $\IC=I(\widehat{\mathcal L_{\mathcal C}})$.
\end{proposition}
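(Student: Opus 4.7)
The inclusion $\IC \subseteq I(\widehat{\mathcal L_{\mathcal C}})$ is immediate from the definition of $\IC$, so the task is the reverse inclusion. My first move would be to invoke \cite[Theorem 2.5]{line-multiview}, which under the hypothesis that no four cameras are collinear yields the set-theoretic equality $V(\IC) = \widehat{\mathcal L_{\mathcal C}}$. Combined with Hilbert's Nullstellensatz, this gives $\sqrt{\IC} = I(\widehat{\mathcal L_{\mathcal C}})$, reducing the proposition to showing that $\IC$ is a \emph{radical} ideal.

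Since $\widehat{\mathcal L_{\mathcal C}}$ is irreducible (as the affine cone over the irreducible variety $\LC$ of \cite[Theorem 2.1]{line-multiview}), the quotient $R/\IC$ has a unique minimal prime. By Serre's criterion, reducedness of $R/\IC$ will follow from Cohen--Macaulayness (providing $(S_1)$) together with generic reducedness $(R_0)$. The plan is therefore to prove (a) that $R/\IC$ is Cohen--Macaulay of codimension $2(m-2)$, and (b) that the Jacobian of the $3\times 3$-minors of $M(\ell)$ attains its expected rank $2(m-2)$ at some point of $\widehat{\mathcal L_{\mathcal C}}$. For (a), I would view $\IC$ as a specialization of the ideal $J$ of $3\times 3$-minors of a generic $4\times m$ matrix of indeterminates; classically $J$ is perfect and Cohen--Macaulay of codimension $2(m-2)$. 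The set-theoretic equality together with $\dim \widehat{\mathcal L_{\mathcal C}} = m+4$ forces the codimension of $\IC$ in $R$ to be $3m-(m+4) = 2(m-2)$, matching $J$, so generic perfection (in the style of Hochster--Eagon) should transfer Cohen--Macaulayness to $\IC$. For (b), I would take $\ell = \Upsilon_{\mathcal C}(L)$ for a sufficiently general $L \in \mathbb G$ and compute the Jacobian of the minors directly, using the parametric description of $\widehat{\mathcal L_{\mathcal C}}$ and the fact that the rank-drop locus is smooth at matrices of rank exactly~$2$.

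The main obstacle I anticipate is the generic-perfection step in (a). The substitution $z_{ij} \mapsto (C_j^T \ell_j)_i$ is far from generic: the entries in distinct columns of $M(\ell)$ depend on disjoint sets of variables, and each column is further constrained to lie in the plane $c_j^\perp \subset \mathbb C^4$. One must verify carefully that the ``no four collinear'' hypothesis is exactly what is needed to keep the codimension of $\IC$ at the expected value and to make the classical perfection-transfer theorems apply. If the abstract transfer proves delicate, a fallback is to exploit the $\mathrm{PGL}_4 \times \mathrm{PGL}_3^m$ action discussed in the introduction to reduce $\mathcal C$ to the canonical form appearing in \Cref{thm:main2}, in which the entries of $M(\ell)$ become extremely structured, and then to establish (a) directly by constructing a Gr\"obner basis or an explicit free resolution on the normalized matrix.
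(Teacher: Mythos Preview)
Your proposal is correct and essentially identical to the paper's argument: the paper also reduces to radicality of $\IC$, establishes Cohen--Macaulayness by verifying $\operatorname{codim}\IC = 2m-4$ and invoking the standard result that determinantal ideals of the expected codimension are Cohen--Macaulay (\cite[Theorem~2.25]{Decker_Lossen}, i.e.\ Hochster--Eagon), and then checks reducedness via the Jacobian criterion at a generic point of the image of $\Upsilon_{\mathcal C}$. Your anticipated obstacle is not one: the transfer theorem needs only the codimension hypothesis, not any genericity of the matrix entries, so once $\dim\widehat{\mathcal L_{\mathcal C}} = m+4$ is known the Cohen--Macaulay step is automatic and your fallback via the group action is unnecessary.
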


The idea for proving this proposition is to show that $\IC$ is a Cohen-Macaulay ideal in the case when no four cameras are collinear. We do so in \Cref{prop_CM} below and use this result to deduce in \Cref{reduced_ring} that~$R/\IC$ is reduced. We formally give the proof of  \Cref{thm:almost_main_ideal} together with a proof of \Cref{thm:main_ideal} at the end of this section.

We first need two lemmata.
\begin{lemma}\label{le: XC} Let $\mathcal{C}$ be an arrangement such that no four \new{cameras} are collinear. 
\begin{enumerate}
\item  Denote
$X_{\mathcal{C}}:=\{\ell =(\ell_1,\ldots,\ell_m)\in \widehat{\mathcal L_{\mathcal C}}\mid \ell_i\neq 0 \text{ for } 1\leq i\leq m\}.$
Then $X_{\mathcal{C}}$ is a Zariski dense subset of $\widehat{\mathcal L_{\mathcal C}}$, meaning $\overline{X_{\mathcal{C}}}=\widehat{\mathcal L_{\mathcal C}}$. 
\item $\widehat{\mathcal L_{\mathcal C}}\subset (\CC^3)^m$ is the closure of the image of the following map,
\begin{align*}
\widehat{\Upsilon}_\mathcal{C}:\CC^{4} \times \CC^{4} \times \CC^m &\dashrightarrow (\CC^3)^m,\quad (x, y, \lambda_1, \ldots , \lambda_m) \mapsto (\ell_1,\ldots,\ell_m), 
\end{align*}
where $\ell_i =  \lambda_i\, (C_ix)\times (C_iy)$ and $\times$ denotes the cross-product in $\mathbb C^3$ (so if $\ell_i\neq 0$, it is the equation of the projective line passing through $C_ix$ and $C_iy$).
\end{enumerate}
\end{lemma}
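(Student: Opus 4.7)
The plan is to prove both parts together. The containment $\widehat{\Upsilon}(\mathbb{C}^4 \times \mathbb{C}^4 \times \mathbb{C}^m) \subseteq \widehat{\mathcal L_{\mathcal C}}$ in part~(2) is immediate: if $\ell_i = \lambda_i (C_i x) \times (C_i y)$, then every column $C_i^T \ell_i$ of $M(\ell)$ annihilates the line $L = \operatorname{span}(x,y) \subset \mathbb{C}^4$, so the columns all lie in the 2-dimensional subspace $L^\perp \subset \mathbb{C}^{4 *}$, and $\operatorname{rank}(M(\ell)) \leq 2$.

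For the reverse direction I will take $\ell \in \widehat{\mathcal L_{\mathcal C}}$, let $W \subseteq \mathbb{C}^{4 *}$ be the column span of $M(\ell)$ (of dimension at most $2$), extend $W$ to a 2-dimensional subspace $\widetilde{W}$, and set $L = \widetilde{W}^\perp \subset \mathbb{P}^3$. The key linear-algebra fact is that when $c_i \notin L$ the preimage $(C_i^T)^{-1}(\widetilde{W}) \subseteq \mathbb{C}^3$ is one-dimensional and spanned by $(C_i x) \times (C_i y)$ for any basis $(x,y)$ of $L$; this follows from the identity $C_i^T((C_i x) \times (C_i y))(v) = \det[C_i x, C_i y, C_i v]$ together with a dimension count using $C_i^T \mathbb{C}^3 = c_i^\perp$. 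So the inclusion $C_i^T \ell_i \in \widetilde{W}$ forces $\ell_i = \lambda_i (C_i x) \times (C_i y)$ for a unique scalar $\lambda_i$ (possibly zero when $\ell_i = 0$), yielding $\ell = \widehat{\Upsilon}(x,y,\lambda)$. When $\dim W \leq 1$, a generic extension $\widetilde{W}$ produces a line $L$ that avoids every camera center; when $\dim W = 2$, however, the line $L = W^\perp$ is uniquely determined by $\ell$ and may pass through some centers.

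The main obstacle is therefore the degenerate case $\dim W = 2$ with $S := \{i : c_i \in L\}$ nonempty, where the no-four-collinear hypothesis forces $|S| \leq 3$. Here I will approximate: replace $L$ by $L(\epsilon) = \operatorname{span}(x + \epsilon u, y + \epsilon v)$, where $(u,v)$ is chosen so that $L(\epsilon)$ avoids every $c_i$ for small $\epsilon \neq 0$. For $i \in S$, writing $C_i y = \tau_i C_i x$ (valid for a generic basis since $c_i \in L$), the cross product $(C_i(x+\epsilon u)) \times (C_i(y+\epsilon v))$ vanishes at $\epsilon = 0$ and has leading term $\epsilon \cdot (C_i x) \times C_i(v - \tau_i u)$; rescaling $\lambda_i(\epsilon) = \mu_i/\epsilon$ and matching the limit to $\ell_i$ reduces to the single linear condition $(C_i^T \ell_i)(v - \tau_i u) = 0$. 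For $i \notin S$ the fixed $\lambda_i$ from the previous paragraph works. The full system is thus at most $|S| \leq 3$ linear equations on $(u,v) \in \mathbb{C}^8$, easily satisfiable. Passing to the limit $\epsilon \to 0$ exhibits $\ell$ as a limit of points of $\operatorname{Im}(\widehat{\Upsilon})$, completing part~(2).

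Part~(1) then follows formally: $\overline{\operatorname{Im}(\widehat{\Upsilon})}$ is irreducible as the closure of the image of an irreducible variety, so part~(2) gives irreducibility of $\widehat{\mathcal L_{\mathcal C}}$, and the non-empty open subset $X_{\mathcal{C}}$ — populated e.g.\ by applying $\widehat{\Upsilon}$ to any line $L$ avoiding all camera centers with every $\lambda_i \neq 0$ — is automatically dense in $\widehat{\mathcal L_{\mathcal C}}$.
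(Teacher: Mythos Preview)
Your argument is correct, and it takes a genuinely different route from the paper's proof.

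The paper proves part~(1) first and then deduces part~(2). For part~(1) it does \emph{not} use irreducibility: given $\ell\in\widehat{\mathcal L_{\mathcal C}}$ with some $\ell_i=0$, it projects onto the coordinates $J=\{i:\ell_i\neq 0\}$, observes that $\pi_J(\ell)\in\mathcal L_{\mathcal C_J}$, and then invokes an external lifting result (\cite[Proposition~2.4(2)]{line-multiview}) to produce $\ell'\in\mathcal L_{\mathcal C}$ with $\pi_J(\ell')=\pi_J(\ell)$; scaling the remaining coordinates of $\ell'$ by $\epsilon\to 0$ gives a sequence in $X_{\mathcal C}$ converging to $\ell$. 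Part~(2) is then obtained from part~(1) together with the description of $\mathcal L_{\mathcal C}$ as the closure of $\Upsilon_{\mathcal C}(\mathbb G)$, via a double-limit argument.

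You reverse the logical order: you establish part~(2) directly by explicit linear algebra---analyzing the column span $W$ of $M(\ell)$, extending to $\widetilde W$, and handling the degenerate case $c_i\in L$ by a first-order perturbation $L(\epsilon)$ with rescaling $\lambda_i(\epsilon)=\mu_i/\epsilon$---and then read off part~(1) from the irreducibility of $\overline{\operatorname{Im}\widehat\Upsilon_{\mathcal C}}$. Your approach is more self-contained (it does not rely on results from \cite{line-multiview}) and more constructive, giving an explicit preimage or approximating curve for every $\ell\in\widehat{\mathcal L_{\mathcal C}}$; the paper's approach, by contrast, reuses the already-developed projective theory and avoids the perturbation analysis. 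One small point worth making explicit in your write-up: when choosing $(u,v)$ subject to the $|S|\le 3$ linear conditions $(C_i^T\ell_i)(v-\tau_i u)=0$, you also need the open conditions $v-\tau_i u\notin L$ (so the leading cross product is nonzero) and that $c_j\in L(\epsilon)$ is not identically satisfied for $j\in S$; these are easily arranged since the solution space has dimension at least~$5$.
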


\begin{proof} \new{We show that $\widehat{\mathcal L_{\mathcal C}}$ lies in the Euclidean closure of $X_{\mathcal{C}}$. Let $\ell \in \widehat{\mathcal L_{\mathcal C}}\setminus X_{\mathcal{C}}$ and let $J\subseteq[m]$ denote the set of indices for which $\ell_i\neq 0$. Observe that if $\ell_i=0$, then the generators of $I_\mathcal{C}$ that involve the variables of $\ell_i$ are zero, because they are homogeneous in $\ell_i$. The remaining generators define the ideal $I_{\mathcal{C}'}$, where $\mathcal{C}'$ is the arrangement we get by removing $C_i$ from $\mathcal C$. In particular, let $$\pi_J: \widehat{\mathcal L_{\mathcal C}}\to \widehat{\mathcal L_{\mathcal C_J}}$$
be the coordinate projection, where $\mathcal{C}_J$ denotes the arrangement of cameras corresponding to the indices of $J$. Then $\pi_J(\ell)\in (\CC^3)^{J}$ is a representative of an element of $\mathcal{L}_{\mathcal{C}_J}$.}

\new{In order to show $\ell\in \overline{X_{\mathcal{C}}}$, it suffices to find a point $\ell'\in \LC$ (which we identify with a representative in $(\CC^3)^m$) such that $\pi_J(\ell)=\pi_J(\ell')$. This is because we can create a sequence ${\ell}_\epsilon\in X_\mathcal{C}$ converging to $\ell$ as $\epsilon\to 0$  by letting $({\ell}_{\epsilon})_i=\ell_i\in\CC^3$ whenever $i\in J$ and  $({\ell}_{\epsilon})_i=\epsilon\ell_i'\in\CC^3$ otherwise.}

\new{We can find such a $\ell'$ trivially if $|J|=0$. If $|J|=1$, say $J=\{i\}$, then let $\ell'$ be the image of any line $L$ in $H_i$ meeting no center under the joint camera map $\Upsilon_\mathcal{C}$. If $|J|\ge 2$, since~$\pi_J(\ell)$ represents an element of $\mathcal{L}_{\mathcal{C}_J}$, \cite[Proposition 2.4 2.]{line-multiview} says that there is an element $\ell'\in \mathcal{L}_{\mathcal C}$ that projects onto $\pi_J(\ell)$ via $\pi_J$.}


For the second part, note that the set $Y_\mathcal{C}\subseteq\CC^4\times \CC^4\times \CC^m$ of points $(x,y,\lambda_1,\ldots,\lambda_m)$ such that the line spanned by $x,y$ in $\PP^3$ contains no center and $\lambda_i\neq 0$ is an open dense subset. Its image under $\widehat{\Upsilon}_\mathcal{C}$ is a subset of $X_\mathcal{C}$ and its closure is a subset of $\widehat{\mathcal L_{\mathcal C}}$. For the other direction, take $\ell\in \widehat{\mathcal L_{\mathcal C}}$. By the first statement, let $\ell^{(n)}=(\ell^{(n)}_1,\ldots,\ell^{(n)}_m)\to \ell$ with $\ell^{(n)}\in X_\mathcal{C}$. The projective class of $\ell^{(n)}$ in $(\PP^2)^m$ lies in $\LC$. Fix an $n$. Since $\LC$ is the Euclidean closure of the image $\Upsilon_\mathcal{C}(\mathbb G)$ (by Chevalley's theorem \cite[Theorem 4.19]{michalek2021invitation}), there is a sequence of lines $L^{(k)}$ meeting no centers such that for some nonzero scaling $\lambda_i^{(k)}$ we have $\ell^{(n)}_i=\lim_{k\to\infty } \lambda_i^{(k)}C_i\cdot L^{(k)}$.
Therefore the images of $(x^{(k)},y^{(k)},\lambda_1^{(k)},\ldots,\lambda_m^{(k)})$ under the map~$\widehat{\Upsilon}_\mathcal{C}$ converge to $\ell^{(n)}$ so that $\ell^{(n)}$ is in the closure of the image for each $n$. This implies that the limit $\ell$ of $\ell^{(n)}$ is also in the closure of the image of $\widehat{\Upsilon}_\mathcal{C}$. 
\end{proof}

\begin{lemma}\label{lem_dimension_cone} $\widehat{\mathcal L_{\mathcal C}}$ is irreducible and of dimension $4+m$. 
\end{lemma}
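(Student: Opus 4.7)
The plan is to derive both statements from \Cref{le: XC}. For irreducibility, I would invoke part~(2) of that lemma, which exhibits $\widehat{\mathcal L_{\mathcal C}}$ as the Zariski closure of the image of the polynomial map $\widehat{\Upsilon}_\mathcal{C}$ whose source $\CC^4 \times \CC^4 \times \CC^m$ is irreducible. Since the closure of the image of an irreducible variety under a morphism is irreducible, this immediately yields that $\widehat{\mathcal L_{\mathcal C}}$ is irreducible.

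For the dimension count, my plan is to use part~(1) of \Cref{le: XC}, which states that $X_\mathcal{C}$ is dense in $\widehat{\mathcal L_{\mathcal C}}$, and to compute $\dim X_\mathcal{C}$ by projecting to $\LC$. Concretely, I would consider the map
$$p\colon X_\mathcal{C} \longrightarrow \LC, \qquad (\ell_1, \ldots, \ell_m) \longmapsto ([\ell_1], \ldots, [\ell_m]).$$
Because the $3\times 3$-minors of $M(\ell)$ are multi-homogeneous in the groups of variables $\ell_i$, the map $p$ is well-defined, and since every point of $\LC$ lifts via any choice of nonzero representatives, $p$ is surjective with every fiber a $(\CC^*)^m$-torsor of dimension $m$. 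Combined with $\dim \LC = 4$ from \cite[Theorem 2.1]{line-multiview}, this gives $\dim X_\mathcal{C} = 4 + m$, and density then yields $\dim \widehat{\mathcal L_{\mathcal C}} = 4 + m$.

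Both steps are short once \Cref{le: XC} is available; there is no substantial obstacle. The only point that requires some care is that the fibers of $p$ are genuinely $m$-dimensional, which relies on all $m$ scalings being independent---this is exactly why the lemma imposes $\ell_i \neq 0$ for every $i$ in the definition of $X_\mathcal{C}$. An alternative route, bypassing $\LC$, would apply the fiber dimension theorem directly to $\widehat{\Upsilon}_\mathcal{C}$: the $(8+m)$-dimensional source carries $4$-dimensional generic fibers coming from the $\GL_2$-ambiguity in choosing a basis $(x,y)$ for a line (compensated by a corresponding rescaling of the $\lambda_i$), yielding the same answer $4+m$.
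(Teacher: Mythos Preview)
Your proposal is correct and follows essentially the same route as the paper: irreducibility via part~(2) of \Cref{le: XC} (closure of the image of an irreducible source), and the dimension count via part~(1) by projecting $X_{\mathcal C}$ onto $\LC$ with $m$-dimensional fibers and invoking $\dim\LC=4$. The only cosmetic difference is that the paper phrases the fiber argument slightly more tersely and does not spell out the $(\CC^*)^m$-torsor structure or the alternative $\widehat{\Upsilon}_{\mathcal C}$-based count you sketch at the end.
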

\begin{proof} By the second statement of \Cref{le: XC}, $\widehat{\mathcal L_{\mathcal C}}$ is the closure of the image of an irreducible variety under a rational map. This means that it is irreducible. Consider $X_\mathcal{C}$ as in \Cref{le: XC}. The projection $\pi: X_\mathcal{C}\to \mathcal L_{\mathcal C}$ is surjective and has $m$-dimensional fibers. So $\dim \overline{X_\mathcal{C}} = \dim  \mathcal L_{\mathcal C} + m$. Moreover, $\overline{X_\mathcal{C}} =\widehat{\mathcal L_{\mathcal C}}$ by the first statement of \Cref{le: XC} and $\dim \mathcal L_{\mathcal C}= 4$ by \cite[Theorem 2.1]{line-multiview}.
\end{proof}

The goal is now to prove that $\IC$ is a \emph{Cohen-Macaulay ideal}. Let us recall the definition of this: A unital, commutative, and Noetherian ring $S$ is called a \emph{Cohen-Macaulay ring}, if $\dim S = \operatorname{depth} S$; see, e.g., \cite[Ch.\ 5]{Decker_Lossen}. An ideal $I$ is a Cohen-Macaulay ideal if $S=R/I$ is a Cohen-Macaulay Ring. 

To prove that $\IC$ is a Cohen-Macaulay ideal we need the concept of \emph{codimension} for ideals in $R=\mathbb C[\ell_{i,j} \mid 1\leq i\leq 3,1\leq j\leq m].$
Let first $J\subset R$ be a prime ideal. The {codimension} of $J$ is defined to be
$\operatorname{codim} J := \dim R_J,$
where $R_J$ is the localization of~$R$ at~$J$ and $\dim R_J$ is the Krull dimension. Equivalently, $\operatorname{codim} J$ is the maximal length~$k$ of a chain of prime ideals of the form $P_0\subsetneq \cdots\subsetneq P_k=J$. This equivalence follows from the bijection of ideals of $R$ contained in $J$ and ideals of $R_J$ by the map $r\mapsto \tfrac{r}{1}$. For any ideal $I\subset R$ its codimension is then defined as
$$\operatorname{codim} I := \min_{I\subset J,\; J \text{ prime}} \operatorname{codim} J.$$ 
It follows from the definitions that for all ideals $I\subset R$ we have
$
    \dim  R/I +\operatorname{codim} I \leq \dim R.
$ 
By \cite[Lemma 11.6 (b)]{Gathmann:CommAlgebra} we have for a prime ideal $J\subset R$
\begin{equation}\label{eq1}\dim R/J+\operatorname{codim} J=\dim R.\end{equation}
Moreover, since $R$ is a \emph{polynomial} ring,~\cite[Corollary 13.4]{Eisenbud} implies that~\eqref{eq1} holds for any ideal $J \subset R.$
We use these facts to prove the following result.
\begin{proposition}\label{prop_CM}
If no four cameras are collinear, $\IC$ is a Cohen-Macaulay ideal.
\end{proposition}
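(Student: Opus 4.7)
The plan is to compute $\operatorname{codim} \IC$ and recognize that it attains the maximal expected codimension for the ideal of $3\times 3$ minors of a $4 \times m$ matrix; Cohen-Macaulayness then follows from standard determinantal ring theory.

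For the codimension step, note that $V(\IC) = \widehat{\mathcal{L}_{\mathcal{C}}}$ holds set-theoretically in $\CC^{3m}$, since both sets are cut out by the condition $\operatorname{rank} M(\ell) \leq 2$. By \Cref{lem_dimension_cone} (which uses the hypothesis that no four cameras are collinear), $\widehat{\mathcal{L}_{\mathcal{C}}}$ is irreducible of dimension $4+m$, and hence $\dim R/\IC = 4+m$. Applying \eqref{eq1} yields $\operatorname{codim} \IC = 3m - (4+m) = 2m-4$.

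This value equals the maximal possible codimension $(r-t+1)(s-t+1) = 2(m-2)$ for the ideal of $t=3$ minors of an $r \times s = 4 \times m$ matrix. By the classical theory of determinantal ideals, an ideal of $t$-minors of a matrix with entries in a Cohen-Macaulay ring is perfect whenever it attains this maximal expected codimension, and so its quotient is Cohen-Macaulay. Concretely, the Eagon-Northcott or Lascoux-type resolution of the generic determinantal ideal specializes to a free resolution of $\IC$ of the correct length via the Buchsbaum-Eisenbud acyclicity criterion, once the codimension is known to be preserved. Since $R$ is a polynomial ring (hence Cohen-Macaulay), we conclude that $R/\IC$ is Cohen-Macaulay.

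The main subtlety is that $M(\ell)$ is not a matrix of indeterminates but has specific linear-form entries $(C_j^T \ell_j)_i$, so we must invoke the specialization version of the Cohen-Macaulayness theorem for determinantal ideals rather than the more familiar Hochster-Eagon result for generic matrices. This is a well-known consequence of the Buchsbaum-Eisenbud acyclicity criterion, granted the codimension computation above; the only real work is to cite the correct reference.
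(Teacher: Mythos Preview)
Your argument is correct and follows essentially the same route as the paper: compute $\operatorname{codim}\IC = 2m-4$ from $\dim\widehat{\mathcal L_{\mathcal C}} = m+4$ via \Cref{lem_dimension_cone} and \eqref{eq1}, then invoke the standard result that a determinantal ideal of $3\times 3$ minors of a $4\times m$ matrix attaining the expected codimension $(4-2)(m-2)$ is Cohen--Macaulay. The paper cites this last fact as \cite[Theorem 2.25]{Decker_Lossen}, whereas you unpack it via the Eagon--Northcott resolution and the Buchsbaum--Eisenbud acyclicity criterion; these are the same result.
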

\begin{proof}
Recall that $M(\ell) \in R^{4\times m}$. We now specialize \cite[Theorem 2.25]{Decker_Lossen} to $k=3$ and $p=4,q=m$ (see also \cite[Section 18]{Eisenbud}). This result shows that $\IC$ is a Cohen-Macaulay ideal if $\operatorname{codim} \IC = (p-k+1)(q-k+1)=2m-4$. We show the latter. 

The zero set of $I_{\mathcal C}$ in $(\mathbb C^3)^m$ is $\widehat{\mathcal L_{\mathcal C}}$. This implies $I(\widehat{\mathcal L_{\mathcal C}})=\sqrt{I_{\mathcal C}}$. Recall that $\sqrt{I_{\mathcal C}}$ is the intersection of all prime ideals containing $I_{\mathcal{C}}$. 
From this and the definition of codimension, it follows that $\operatorname{codim} I_{\mathcal C}=\operatorname{codim}I(\widehat{\mathcal L_{\mathcal C}})$. Thus~\Cref{lem_dimension_cone} implies
$\dim R/I(\widehat{\mathcal L_{\mathcal C}}) = 4+m$. 
Using~\Cref{eq1}, we conclude
 \phantom\qedhere
\[\pushQED{\qed} \operatorname{codim} I(\widehat{\mathcal L_{\mathcal C}}) = \dim R-(4+m) =2m-4.\qedhere
\popQED\]
\end{proof}
The next result we need for proving \Cref{thm:main_ideal} is that the quotient ring for the determinantal ideal is reduced. The proof relies on \Cref{prop_CM}.
\begin{proposition}\label{reduced_ring}
If no four cameras are collinear, then $R/\IC$ is reduced. 
\end{proposition}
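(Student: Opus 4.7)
The plan is to apply Serre's criterion for reducedness: a Noetherian ring is reduced if and only if it satisfies both $(R_0)$ and $(S_1)$. \Cref{prop_CM} establishes that $R/\IC$ is Cohen-Macaulay, which immediately yields $(S_k)$ for every $k$, so only $(R_0)$ remains to be checked.

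First I would pin down the minimal primes of $\IC$. The Cohen-Macaulay property forces $\IC$ to be unmixed, so every associated prime has codimension exactly $2m-4$. By \Cref{lem_dimension_cone}, $\widehat{\mathcal L_{\mathcal C}} = V(\IC)$ is irreducible, so its vanishing ideal $I(\widehat{\mathcal L_{\mathcal C}}) = \sqrt{\IC}$ is a prime of codimension $2m-4$. Combined with unmixedness, this makes $I(\widehat{\mathcal L_{\mathcal C}})$ the unique minimal prime of $\IC$. Condition $(R_0)$ then amounts to exhibiting a single smooth point of the affine scheme $\mathrm{Spec}(R/\IC)$, equivalently a point $\ell^\star \in \widehat{\mathcal L_{\mathcal C}}$ at which the Jacobian of the $3\times 3$-minors generating $\IC$ attains its maximal possible rank $2m-4$.

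To produce such an $\ell^\star$ I would use the parametrization from \Cref{le: XC}(2). Pick a sufficiently generic $(x,y,\lambda) \in \CC^{4}\times\CC^{4}\times\CC^{m}$ and set $\ell^\star := \widehat{\Upsilon}_{\mathcal C}(x,y,\lambda)$. Since $\widehat{\Upsilon}_{\mathcal C}$ dominates the $(4+m)$-dimensional variety $\widehat{\mathcal L_{\mathcal C}}$ and we are in characteristic zero, generic smoothness guarantees that $d\widehat{\Upsilon}_{\mathcal C}$ at a generic source point has image of dimension $4+m$ in $T_{\ell^\star}(\CC^{3})^m$. This image is contained in the Zariski tangent space $T_{\ell^\star}V(\IC)$, whose dimension is at least $\dim R/\IC = 4+m$ by Krull's theorem. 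The sandwich forces $\dim T_{\ell^\star}V(\IC) = 4+m$, so the Jacobian of the generators of $\IC$ at $\ell^\star$ has rank exactly $3m - (4+m) = 2m-4$. This verifies $(R_0)$ and completes the proof.

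The main obstacle is confirming that the image of $d\widehat{\Upsilon}_{\mathcal C}$ really attains dimension $4+m$ at a generic source point. The abstract appeal to generic smoothness is clean but hides a computation; an explicit alternative is to differentiate the formula $\ell_i = \lambda_i (C_i x)\times(C_i y)$, identify a $4$-dimensional kernel of $d\widehat{\Upsilon}_{\mathcal C}$ coming from the $\mathrm{GL}_2$-action on $(x,y)$ paired with compensating rescalings of the $\lambda_i$, and verify that this is the \emph{full} kernel. The hypothesis that no four cameras are collinear is the expected input that rules out pathological collapses of the cross products and ensures the generic fiber has the predicted dimension.
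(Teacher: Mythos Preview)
The proposal contains a genuine gap at the ``sandwich'' step. You argue that the image of $d\widehat{\Upsilon}_{\mathcal C}$ at a generic source point is $(4+m)$-dimensional and sits inside the Zariski tangent space $T_{\ell^\star}V(\IC)$, and separately that $\dim T_{\ell^\star}V(\IC)\ge 4+m$ by dimension theory. But both of these are \emph{lower} bounds on $\dim T_{\ell^\star}V(\IC)$; nothing you wrote gives an upper bound, so no sandwich exists. The conclusion $\dim T_{\ell^\star}V(\IC)=4+m$ simply does not follow. A toy example makes the failure transparent: for the scheme $\{y^2=0\}\subset\CC^2$ with parametrization $t\mapsto(t,0)$, the differential has rank $1$ (the dimension of the reduced line), yet the scheme-theoretic tangent space is $2$-dimensional everywhere and the Jacobian of $y^2$ has rank $0$. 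Your argument would wrongly declare this scheme reduced. Generic smoothness of the parametrization only sees the \emph{reduced} image, which is exactly the structure you are trying to distinguish from the possibly non-reduced scheme $\mathrm{Spec}(R/\IC)$.

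What is actually needed---and what the paper does---is to bound the Jacobian rank from \emph{below} at a well-chosen point. The paper picks $\ell$ in the image of $\Upsilon_{\mathcal C}$ (coming from a line $L$ avoiding all centers), factors the Jacobian via the chain rule as $\mathrm{Jac}=\mathrm J_1\cdot\mathrm J_2$, where $\mathrm J_1$ is the Jacobian of the minors of a generic $4\times m$ matrix and $\mathrm J_2$ encodes the linear maps $\ell_i\mapsto C_i^T\ell_i$, and then proves by an explicit transversality computation that $\ker\mathrm J_1\cap\operatorname{Im}\mathrm J_2$ has dimension exactly $m+4$. This last step uses the hypothesis that $L$ misses every center (so $A^T c_1\ne0$) and cannot be replaced by an abstract appeal to generic smoothness. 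Your final paragraph misidentifies the obstacle: the issue is not whether $d\widehat{\Upsilon}_{\mathcal C}$ has rank $4+m$, but that this rank says nothing about the scheme $V(\IC)$.
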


\begin{proof}
Denote by $m_1,\ldots, m_N$, where $N=4\binom{m}{3}$, the $3\times 3$ minors of $M(\ell)$. Then, we have $\IC=\langle m_1,\dots, m_N\rangle$. We have shown in the proof of \Cref{prop_CM} that 
$$c:=\operatorname{codim}(\IC)=2m-4.$$
Consider the Jacobian matrix 
$$\mathrm{Jac} := \begin{bmatrix}\
\frac{\partial m_k}{\partial \ell_{i,j}}\ 
\end{bmatrix}_{k\in\{1,\ldots, N\}, (i,j)\in\{1,2,3\}\times\{1,\ldots,m\}}\in\mathbb C^{N\times (3m)}.$$
We denote by $f_1,\ldots,f_M$, where $M:=\tbinom{N}{c}\cdot \tbinom{3m}{c}$, the $c\times c$ minors of $\mathrm{Jac}$. 
Let~us~consider the ideal generated by these minors modulo $I_{\mathcal C}$; i.e., we consider $J:=\langle f_1,\ldots,f_M\rangle / \IC \subset R/ \IC$. Since $R/I_{\mathcal C}$ is Cohen-Macaulay by \Cref{prop_CM}, we know that $R/I_{\mathcal C}$ is reduced if and only if $\operatorname{codim}J\geq 1$; see \cite[Theorem 18.15]{Eisenbud}.
\new{It therefore suffices} to find a tuple $\ell$ of image lines such that $\mathrm{Jac}$ has rank equal to $c$. We prove\footnote{The proof is similar to the computation in \cite[Section 3]{line-multiview}} the existence of such an $\ell$.

Consider a tuple of lines $\ell\in(\mathbb P^2)^m$ such that $M(\ell)$ has rank 2, and such that $\ell\in \Upsilon_{\mathcal C}(\mathbb G)$; i.e., there is a line $L$ mapping to $\ell$, and this line does not pass through any camera center. 
Let $A=(a_{k,\ell})\in\mathbb C^{4\times m}$ be a matrix whose entries are variables that depend on $\ell_{i,j}$, and let $m_1,\ldots,m_N$ be its $3\times 3$ minors. Then, by the chain rule $\mathrm{Jac} = \mathrm J_1\cdot \mathrm J_2$, where 
$$ \mathrm J_1 = \begin{bmatrix}\
    \frac{\partial m_k}{\partial  a_{k,\ell}}\ 
    \end{bmatrix}\in\mathbb C^{N\times (4m)}\quad \text{and}\quad \mathrm J_2 = \begin{bmatrix}\
        \frac{\partial a_{k,\ell}}{\partial  \ell_{i,j}}\ 
        \end{bmatrix}\in\mathbb C^{(4m)\times (3m)}
$$
As $M(\ell)$ has rank 2, the codimension of 
$\mathrm J_1$ 
is equal to the dimension of the variety of rank~2 matrices in $\mathbb C^{4\times m}$, which is $2m-4=c$. Moreover, by linearity
$$\operatorname{Im} \mathrm J_2 = \{[C_1^Tv_1,\ldots, C_m^Tv_m]\in\mathbb C^{4\times m}\mid v_1,\ldots,v_m\in\mathbb C^3\}$$
(here, we have interpreted the image of $ \mathrm J_2$ as a space of matrices).
So, $\operatorname{rank}\mathrm J_2 = 3m$ and we have to show that $\dim \ker \mathrm J_1 \cap\operatorname{Im} \mathrm J_2 = 3m-(2m-4)=m+4$. Notice that $\dim \ker \mathrm J_1 \cap\operatorname{Im} \mathrm J_2 =m+4$ if and only if $\ker \mathrm J_1 $ and $\operatorname{Im} \mathrm J_2$ intersect transversally.

Denote the bilinear form $\langle B_1,B_2\rangle = \mathrm{Trace}(B_1^TB_2)$, and for a subspace $V\subset \mathbb C^{4\times m}$ we denote $V^\perp:=\{B_1\in \mathbb C^{4\times m} \mid \langle B_1,B_2\rangle = 0 \text{ for all } B_2\in V\}$. Then, to show that $\ker \mathrm J_1 $ and $\operatorname{Im} \mathrm J_2$ intersect transversally we can equivalently show that 
$(\ker\mathrm J_1)^\perp \cap (\operatorname{Im} \mathrm J_2)^\perp =0.$ 

We can write 
$\operatorname{Im} \mathrm J_2 = \{P\in\mathbb C^{4\times m} \mid \langle P, c_ie_i^T\rangle = 0,\text{ for } 1\leq i\leq m\},$
where $e_i\in\mathbb R^m$ denotes the $i$-th standard basis vector. 
Assume that $0\neq B=\sum_{k=1}^m\lambda_i c_ie_i^T\in (\ker \mathrm J_1)^\perp$. Without restriction, we assume that $\lambda_1\neq 0$. We show $B\not\in (\ker \mathrm J_2)^\perp$.
The kernel of $\mathrm J_1$ is the tangent space of the variety of rank 2 matrices at $A$. Writing $A = UV^T$ with $U\in\mathbb C^{4\times 2}, V\in\mathbb C^{m\times 2}$ this tangent space is given by all matrices of the form $U\dot V^T + \dot UV^T$ with $\dot U\in\mathbb C^{4\times 2}, \dot V\in\mathbb C^{m\times 2}$. Take $\dot U=0$ and $\dot V = xe_1^T$ with $x=V^TA^*\overline{c_1}$. Then,
$$\langle B, U\dot V^T\rangle=\langle U^TB, \dot V^T\rangle=x^TU^TBe_1 = 
\lambda_1\ (x^TU^Tc_1) = \lambda_1 (\overline{A^Tc_1})^T(A^Tc_1).$$
Recall that $L$ spans the left kernel of $A$. Since $c_1\not\in L$, we have $A^Tc_1\neq 0$, so that $\langle B, U\dot V^T\rangle=\lambda_1 (\overline{A^Tc_1})^T(A^Tc_1)\neq 0$.  This shows $B\not\in (\ker \mathrm J_1)^\perp$. Hence, $\operatorname{rank}\mathrm{Jac}=c$.
\end{proof}

We can now prove \Cref{thm:main_ideal} and \Cref{thm:almost_main_ideal}.
\begin{proof}[Proof of Theorem \ref{thm:main_ideal} and Proposition \ref{thm:almost_main_ideal}]
The zero set of $\IC$ in $(\mathbb C^3)^m$ is $\widehat{\mathcal L_{\mathcal C}}$. Moreover, the ideal~$\IC$ is reduced by \Cref{reduced_ring}. This implies $I(\widehat{\mathcal L_{\mathcal C}})=\IC$, which is the statement of \Cref{thm:almost_main_ideal}.

By the multi-projective Nullstellensatz, $I(\LC)$ is obtained from $\IC=I(\widehat{\mathcal L_{\mathcal C}})$ after saturation with respect to the irrelevant ideal $\bigcup_{i=1}^m V (\ell_i)$. \new{By the first part of \Cref{le: XC},
\[I(\widehat{\mathcal L_{\mathcal C}}) =  I ( X_\mathcal{C}) = I (\widehat{\mathcal L_{\mathcal C}}\setminus \cup_{i=1}^m V (\ell_i)) = I(\widehat{\mathcal L_{\mathcal C}}) : \big(\underbrace{I(\cup_{i=1}^m V (\ell_i))}_\text{irrelevant}\big)^\infty .\] This means that the ideal $\IC$ is already saturated. This proves \Cref{thm:main_ideal}.}
\end{proof}

\medskip
\section{Gröbner Bases for Generic Translational Cameras}\label{sec:groebner}

Let $\mathcal{C}$ be an arrangement of $m$ cameras such that no four cameras are collinear. As before, 
\[\IC := \left\langle 3\times3\text{-minors of } M(\ell)\right\rangle.\]
By \Cref{thm:main_ideal}, proven in the last section, $\IC$ is the ideal of the line multiview variety $\LC$. The purpose of this section is to provide a Gröbner basis for $\IC$ when $\mathcal{C}$ consists of sufficiently generic \emph{translational} cameras.
As we discuss in detail in~\Cref{sec:group}, a generic camera is equivalent to a translational camera up to coordinate change.
This section is \new{also} intended as a warm-up to~\Cref{subsec:collinear-it-ideal}, where similar techniques are used to prove a version of~\Cref{thm:main_ideal}.
Our approach is inspired by the arguments in \cite{atlas}. In this article, the authors work with a certain \emph{symbolic multiview ideal}, where the camera \new{entries} are also variables, and then invoke a specialization argument (see~\cite[Theorem 3.2 and \S 4]{atlas}). We use a similar strategy to obtain a Gr\"obner basis for $\IC$. We begin by defining an analogue of the symbolic multiview ideal in our setting.

\subsection{A Gr\"obner Basis for Partially-Symbolic Multiview Ideals}\label{subsec:it-ideal}

In this section, we study an analogue of the $3\times 3$ minor ideal $\IC$ that is defined for an arrangement of $m\ge 3$ partially-symbolic cameras of a particular form.
Let 
\[\CC [\bl, \bt ] = \CC [\ell_{1, 1}, \ldots \ell_{3, m}, t_{1, 1} , \ldots , t_{3, m}]\] 
denote a polynomial ring in $6m$ indeterminates.
As before, the $3m$ indeterminates $\ell_{i, j}$ represent homogeneous coordinates on the space of $m$-tuples of lines $(\PP^2)^m.$ Let $I_3$ denote the $3\times 3$ identity matrix. We use the other $3m$ indeterminates $t_{i, j}$ to define 
matrices $C(t_1), \ldots , C(t_m)\in \CC [\bl, \bt]^{3\times 4}$ given by
\begin{equation}\label{eq:translational-cameras}
C(t_i) = \begin{bmatrix}
I_3 & t_i
\end{bmatrix}
=
\begin{bmatrix}
1 & 0 & 0 & t_{1,i } \\
0 & 1 & 0 & t_{2,i } \\
0 & 0 & 1 & t_{3,i } 
\end{bmatrix}.
\end{equation}
By analogy with $\IC$, we define $\ICtm$ to be the $3\times  3$ minor ideal associated with the symbolic arrangement $\mathcal C(\bt) := (C(t_1), \ldots , C(t_m))\in (\CC[\bt]^{3\times 4})^m$:
\begin{equation}\label{eq:ICt}
\ICtm = \left\langle \text{$3\times 3$-minors of }  \begin{bmatrix} C(t_1)^T\ell_{1} & \cdots  & C(t_m)^T\ell_{m} \end{bmatrix} \right\rangle.
\end{equation}
We call $\ICtm$ the \emph{indeterminate translation ideal} or the \emph{IT ideal} for short. The motivation for considering camera matrices of the form (\ref{eq:translational-cameras}) is that we can always choose coordinates on $(\mathbb P^2)^m$ and $\mathbb P^3$ such that the camera matrices have this form. Choosing coordinates corresponds to acting by $\mathrm{PGL}_3^m\times \mathrm{PGL}_4$ on the space of $m$-tuples of camera matrices $(\mathbb C^{3\times 4})^m$ via $(H_1,\ldots,H_m,H)\cdot (C_1,\ldots,C_m) := (H_1C_1H,\ldots, H_mC_mH)$. This action is studied in \Cref{sec:group}.

Recall that the \new{Graded Reverse Lex (GRevLex)} order is defined as follows. Monomials are identified with their exponent vector in $\mathbb N^n$. For $\alpha_1,\alpha_2\in \mathbb{N}^{n}$, we say $\alpha_1 >_{\text{GRevLex}} \alpha_2$ if $|\alpha_1|>|\alpha_2|$
or $|\alpha_1|= |\alpha_2|$ and the rightmost nonzero entry of $\alpha_1-\alpha_2\in\mathbb{Z}^{n}$ is negative. We will describe, for any number of cameras $m\ge 3,$ a Gr\"{o}bner basis for $\ICtm$ with respect to a particular monomial order $<,$ defined to be the product of GRevLex orders on the subrings $\CC [\bl]$ and $\CC [\bt ].$
In other words, the monomial order $<$ is defined as follows:
\begin{equation}\label{eq:product-order-def}
\bl^{\alpha_1} \bt^{\beta_1} < \bl^{\alpha_2} \bt^{\beta_2}\quad  
\text{ if }
(\bl^\alpha_1 <_{\text{GRevLex}} \bl^\alpha_2)
\text{ or }
(\alpha_1 = \alpha_2 
\text{ and }
\bt^{\beta_1} <_{\text{GRevLex}} \bt^{\beta_2}).
\end{equation}
For a $k$-element set $\sigma = \{ \sigma_1, \ldots , \sigma_k \} \subset [m]$ we write $\ICt{\sigma_1}{\sigma_k}$ for the IT ideal associated to the cameras $\mathcal{C} (t_{\sigma_1}, \ldots , t_{\sigma_k}).$ Let ${[m]\choose k}$ denote the set of all subsets of $[m]$ of size $k$.
For $3\le k \le m,$ we observe that
\begin{equation}\label{prop:decompose-it-ideal}
\ICtm = \displaystyle\sum_{\sigma \in \binom{[m]}{k}} \ICt{\sigma_1}{\sigma_k}, 
\end{equation}
since the $3\times 3$ minors generating $\ICtm$ also generate the ideal on the right-hand side. Now, for $\sigma,\pi \in \binom{[m]}{k}$ let $G, G'$ be the reduced Gr\"obner bases\footnote{Recall (see e.g.~\cite[\S 2.7, Theorem 9]{CLO15}) that a polynomial ideal has a unique reduced Gr\"{o}bner basis with respect to any monomial order.
A Gr\"{o}bner basis $G$ is said to be \emph{reduced} if every $g\in G$ has leading coefficient~$1$ and, for distinct $g, g' \in G,$ the leading term $\mathrm{in}_< (g)$ does not divide any term of $g'.$} for $\ICt{\sigma_1}{\sigma_k}$ and~$\ICt{\pi_1}{\pi_k}$, respectively. Substituting variables with respect to the monomial order $<$ we get an isomorphism $\ICt{\sigma_1}{\sigma_l}\to \ICt{\pi_1}{\pi_l}$ that maps $G$ to $G'$. Therefore, it suffices to study $\ICt{1}{k}$. This motivates the following. 

\begin{definition}
We denote by $G_k$ the reduced Gr\"{o}bner basis of $\ICt{1}{k}$ with respect to the monomial order $<$.
\end{definition}

The computer algebra system \texttt{Macaulay2}~\cite{M2} allows us to compute $G_k$ for small values of $k$.
As we will argue, the results of these computations for $G_3, \ldots, G_{10}$ allow us to determine the reduced Gr\"{o}bner basis of $\ICtm$ for any number of cameras $m.$
We invite the reader to explore the important case $m=5$ by running the short script in~\Cref{fig:GB-M2}.

\begin{figure}[ht]
\begin{lstlisting}[language=Macaulay2]
m = 5
R = QQ[l_(1,1)..l_(3,m),t_(1,1)..t_(3,m), MonomialOrder => {3*m,3*m}]
linesP2 = for i from 1 to m list matrix{for j from 1 to 3 list l_(j,i)}
cams = for i from 1 to m list id_(R^3) 
                    | matrix for j from 1 to 3 list {t_(j,i)}
rankDropMatrix = matrix{apply(linesP2, cams, (l,c) -> transpose(l*c))}
ITm = minors(3, rankDropMatrix)
Gm = gb ITm
\end{lstlisting}
\caption{\new{\texttt{Macaulay2} code for computing $G_5$}.}\label{fig:GB-M2}
\end{figure}

\noindent \new{The} code in \Cref{fig:GB-M2} lets us inspect $G_m$ for small $m$. The computation reveals the following interesting pattern.
\begin{lemma}\label{lem_5}
Let $2\leq m\leq 10$. Every element of $G_m$ is supported on at most five cameras. More specifically, the reduced Gr\"obner basis $G_m$ is the union of Gr\"obner basis for all subsets of at most $5$ cameras:
\[
    G_m = \displaystyle\bigcup_{\sigma \in \binom{[m]}{5}} \GCt{\sigma_1}{\sigma_5}.
\]
For each $3\leq d\leq 7$, the number of elements of $G_m$ of degree $d$ are listed below.
\begin{center}
    \begin{tabular}{c|c c c c c}
    $d$ & 3&4&5&6&7\\
    \hline
    $\# \{ g \in G_m \mid \deg (g) = d \}$ 
    & $\binom{m}{3}$
    & $3 \cdot \binom{m}{3} $
    & $\binom{m}{4} $
    & $ \binom{m}{4} $
    & $\binom{m+1}{5} $
    \end{tabular}
    \smallskip
\end{center}
\end{lemma}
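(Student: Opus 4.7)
The lemma is a computational observation, and my plan is to verify it directly for each $m \in \{2, 3, \ldots, 10\}$ by running the \texttt{Macaulay2} script in~\Cref{fig:GB-M2} with the value of $m$ set appropriately. The script produces the reduced Gröbner basis $G_m$ explicitly; all three claims of the lemma can then be read off from the printed output.

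First, to verify the support claim, for each polynomial $g \in G_m$ I extract the set $S(g) \subset [m]$ of camera indices $j$ such that some $\ell_{i,j}$ or $t_{i,j}$ occurs in $g$, and check that $|S(g)| \le 5$. Second, to verify the degree counts, I tabulate the degree of each element of $G_m$ and compare with the table; the total count should equal $4\binom{m}{3} + 2\binom{m}{4} + \binom{m+1}{5}$. Third, for the decomposition $G_m = \bigcup_{\sigma \in \binom{[m]}{5}} G_\sigma$: the support claim gives $G_m \subseteq \bigcup_\sigma G_\sigma$, because any $g \in G_m$ lies in $\ICt{\sigma_1}{\sigma_5}$ for some $5$-subset $\sigma \supseteq S(g)$, and by uniqueness of reduced Gröbner bases it coincides with an element of $G_\sigma$. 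The reverse inclusion uses~\eqref{prop:decompose-it-ideal}, which implies $G_\sigma \subset \ICt{1}{m}$ for every $\sigma$, together with a direct check from the printed output that each element of each $G_\sigma$ in fact belongs to $G_m$. By the symmetry of the construction under permutation of cameras, this last check reduces to the single case $\sigma = \{1, \ldots, 5\}$, i.e., comparing $G_m$ to (the image of) $G_5$ under the relabeling $i \mapsto \sigma_i$.

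The principal obstacle is computational cost: for $m = 10$, the ring $\CC[\bl, \bt]$ has $60$ variables, and the Gröbner basis computation can be slow and memory-intensive. Once the computations complete, the verification of all three claims is routine inspection of the output.
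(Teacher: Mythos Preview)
Your approach matches the paper's: the lemma is presented there as a purely computational observation, established by running the \texttt{Macaulay2} script in~\Cref{fig:GB-M2} for each $m\le 10$ and inspecting the output, exactly as you propose.

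One small logical point on your third item: the forward inclusion $G_m \subseteq \bigcup_\sigma G_\sigma$ does not follow from ``uniqueness of reduced Gr\"obner bases'' as stated. You first need the reverse inclusion $G_\sigma \subseteq G_m$ (which you verify directly); then, for $g\in G_m$ supported on $\sigma$, one checks $g\in \ICt{\sigma_1}{\sigma_5}$ (e.g.\ via the specialization $\ell_j,t_j\mapsto 0$ for $j\notin\sigma$, which sends $\ICtm$ onto $\ICt{\sigma_1}{\sigma_5}$ and fixes $g$), and finally reducedness of $G_m$ forces $g$ to equal the element of $G_\sigma\subseteq G_m$ whose leading term divides $\mathrm{in}_<(g)$. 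Alternatively, once the reverse inclusion and the degree table are verified, a straight cardinality comparison gives the forward inclusion with no further argument.
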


We now state the main result of this section.
\begin{theorem}\label{thm:GB}
For any $m$, the reduced Gr\"{o}bner basis $G_m$ is equal to the union over all of its restrictions to subsets of $5$ cameras; more precisely,
\begin{equation}\label{eq:m-to-5}
G_m = \displaystyle\bigcup_{\sigma \in \binom{[m]}{5}} \GCt{\sigma_1}{\sigma_5}.
\end{equation}
\end{theorem}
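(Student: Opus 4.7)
The plan is to reduce the theorem to the computational base case in Lemma \ref{lem_5} via Buchberger's criterion, exploiting the key observation that an S-polynomial of two generators, each supported on at most five cameras, is itself supported on at most ten cameras. Let $U = \bigcup_{\sigma \in \binom{[m]}{5}} \GCt{\sigma_1}{\sigma_5}$ denote the proposed basis. First I would verify that $U$ is a generating set for $\ICt{1}{m}$. By the decomposition (\ref{prop:decompose-it-ideal}), each element of $U$ lies in $\ICt{1}{m}$. Conversely, every $3\times 3$ minor generating $\ICt{1}{m}$ involves only three cameras; choosing any $5$-subset $\sigma$ containing those three indices shows that this minor lies in $\ICt{\sigma_1}{\sigma_5}$, and hence in the ideal generated by $\GCt{\sigma_1}{\sigma_5} \subset U$.

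Next I would apply Buchberger's criterion. Given $f \in \GCt{\sigma_1}{\sigma_5}$ and $g \in \GCt{\tau_1}{\tau_5}$, the S-polynomial $S(f, g)$ only involves variables associated with cameras in $\sigma \cup \tau$, a set of size at most $10$. Applying Lemma \ref{lem_5} to the smaller arrangement indexed by $\sigma \cup \tau$, we conclude that $\bigcup_{\rho \in \binom{\sigma \cup \tau}{5}} \GCt{\rho_1}{\rho_5}$ is the reduced Gr\"obner basis of the corresponding IT ideal. In particular, $S(f, g)$ reduces to zero modulo this set, which is itself a subset of $U$. Hence $U$ is a Gr\"obner basis of $\ICt{1}{m}$.

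To identify $U$ with the reduced Gr\"obner basis $G_m$, I would verify reducedness: for any two distinct elements $f, g \in U$ with $f \in \GCt{\sigma_1}{\sigma_5}$ and $g \in \GCt{\tau_1}{\tau_5}$, both lie in the reduced basis $G_{|\sigma \cup \tau|}$ by the base case, so the leading term of $f$ has coefficient $1$ and cannot divide any term of $g$. Uniqueness of the reduced Gr\"obner basis then yields $G_m = U$.

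The main technical subtlety I anticipate is checking that the product order (\ref{eq:product-order-def}) on $\CC[\bl, \bt]$ restricts compatibly to the subring associated with any union $\sigma \cup \tau$ of 5-subsets, so that Gr\"obner-theoretic computations in the smaller ring lift to valid reductions in the larger one. Since the product order compares by total degree first and then uses the rightmost nonzero entry of the exponent difference, restricting to a subset of variables yields an order of the same form, matching the product GRevLex order implicit in the definition of $\GCt{\rho_1}{\rho_5}$ up to the natural relabeling of cameras. I expect this to be a verification rather than a conceptual obstacle; with it in hand the two-step reduction above completes the proof.
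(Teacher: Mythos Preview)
Your proposal is correct and follows essentially the same approach as the paper: reduce to the computational base case of Lemma~\ref{lem_5} by observing that any S-pair from $U$ involves at most ten cameras, then invoke Buchberger's criterion and uniqueness of the reduced Gr\"obner basis. Your write-up is in fact slightly more careful than the paper's, since you explicitly check that $U$ generates $\ICtm$ and flag the compatibility of the product order with restriction to a subset of cameras---both points the paper leaves implicit.
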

\begin{proof}

Let us write
$
G_m' := \bigcup_{\sigma \in \binom{[m]}{k}} \GCt{\sigma_1}{\sigma_5}
$. The goal is to show $G_m'=G_m$.

By \Cref{lem_5} we have $G_m = G_m'$ for $2\leq m\leq 10$.
For $m\geq 11,$ we apply a variant of Buchberger's S-pair characterization. Let us briefly recall it.
For a finite subset $B\subset R$ we write $f \to_B 0$, if $f$ has a \emph{standard representation} of the form $f =\sum_{i=1}^s h_ i g_i$ such that $h_1,\ldots,h_s\in B$ and 
$\mathrm{in}_< (f) = \max \{ \mathrm{in}_< (h_1 g_1), \, \ldots , \, \mathrm{in}_< (h_s g_s) \}.$ A set of polynomials $B$ is a Gr\"{o}bner basis if and only if for every $g, g' \in B$ we have that~$S(g, g') \to_B\,  0$, where 
\[
S(g, g') := \frac{\mathrm{lcm} (\mathrm{in}_< (g), \mathrm{in}_<(g'))}{\mathrm{in}_<(g)}\, g
-
\frac{\mathrm{lcm} (\mathrm{in}_< (g), \mathrm{in}_<(g'))}{\mathrm{in}_<(g')} \, g'.
\]
Therefore, to show that $G_m'$ is a Gr\"{o}bner basis for $\ICtm$, it suffices to show $S(g, g') \to_{G_m'}\,  0$ \new{for all $g,g'\in G_m'$}. Let $g, g' \in G_m'$. Then, by the definition of $G_m'$ there exists two subsets $\sigma , \pi \in \binom{[m]}{5}$ such that  we have $g\in \GCt{\sigma_1}{\sigma_5},$ $g' \in \GCt{\pi_1}{\pi_5}$. The union of two subsets of size 5 yields a subset of size at most 10. 
We may therefore write 
$\sigma \cup \pi = \{ \sigma_1 ', \ldots , \sigma_k ' \}$ such that $5 \le k \le 10.$
Again by \Cref{lem_5}, $\GCt{\sigma_1'}{\sigma_k'}$ is a Gr\"{o}bner basis, so we must we have 
\begin{align*}
S(g,g') \to_{\GCt{\sigma_1'}{\sigma_k'}} 0.
\end{align*}
But this already implies $S(g, g') \to_{G_m'} 0$, since $g$ and $g'$ only depend on the variables corresponding to $\sigma_1 ', \ldots , \sigma_k '$. This shows that $G_m'$ is a Gr\"obner basis for $<$.

To see that $G_m'$ is reduced, we may again appeal to the cases $m \le 10.$
For any $g$ and~$g'$ as above, $\mathrm{in}_< (g)$ does not divide any term of $g'$ since $\GCt{\sigma_1'}{\sigma_k'}$ is reduced.
Since reduced Gr\"{o}bner bases are unique, we may conclude that $G_m = G_m'.$
\end{proof}

Next, we state a particular property of the Gr\"obner basis $G_m$ in the next proposition. This will be used in the next subsections to determine a Gr\"obner basis for $\IC$ under the explicit genericity conditions given in~\Cref{def:center-generic}. 

\begin{proposition}\label{prop:GB}
The reduced Gr\"{o}bner basis $G_m$ has the following property:
Suppose that~$f (\bt ) \in \CC [\bt]$ is the coefficient of the leading GRevLex monomial in $\CC [\bl ]$ of some element of~$G_m.$ Then $f$ is of one of the four forms listed below:
\begin{itemize}
    \item[  \new{(i)}] $f(\bt)=1$, or
    \item[ \new{(ii)}] $f(\bt) = t_{1,i} - t_{1,j}$, or
    \item[\new{(iii)}] $f(\bt) = (t_{1,i} - t_{1,j}) (t_{1,k} - t_{1,l})$
    \item[\new{(iv)}] $f(\bt )$ is a $3\times 3$ minor of the $4\times m$ matrix of symbolic camera centers:
    \[\begin{bmatrix}
    t_{1,1} & t_{1,2} & \cdots & t_{1,m } \\
    t_{2,1} & t_{2, 2} & \cdots & t_{2,m} \\
    t_{3,1} & t_{3,2} & \cdots & t_{3,m} \\
    -1 & -1 & \cdots  & -1 
    \end{bmatrix}.
    \]
\end{itemize}
\end{proposition}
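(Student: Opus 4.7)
The plan is to exploit the structural result of \Cref{thm:GB}, which already reduces the problem to a finite check. More precisely, every $g \in G_m$ lies in some $\GCt{\sigma_1}{\sigma_5}$ with $\sigma \in \binom{[m]}{5}$, and therefore its $\bl$-leading coefficient $f(\bt) \in \CC[\bt]$ involves only the variables $t_{*, \sigma_1}, \ldots , t_{*, \sigma_5}$. Hence it suffices to prove the proposition in the case $m \le 5$.

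Once the problem is restricted to $m \in \{3,4,5\}$, the reduced Gr\"{o}bner basis $G_m$ is a concrete finite object that can be computed directly in \texttt{Macaulay2} using essentially the script in~\Cref{fig:GB-M2}. The verification then proceeds by inspection: for each $g \in G_m$, extract the leading monomial of $g$ with respect to the GRevLex order on $\CC[\bl]$, read off its coefficient $f(\bt)$, and confirm that $f(\bt)$ matches one of the four listed forms. The enumeration can be organized by the degree stratification in \Cref{lem_5}: the $\binom{m}{3}$ cubic generators contribute type (i) and (ii) coefficients coming from $3\times 3$ minors of $M(\ell)$ restricted to three cameras (the difference $t_{1,i} - t_{1,j}$ arises because rows~$2,3,4$ of $\bigl[C(t_i)^T\ell_i\ C(t_j)^T\ell_j\ C(t_k)^T\ell_k\bigr]$ produce a minor whose $\bl$-leading coefficient is exactly such a difference). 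The degree~$4$ and degree~$5$ elements, respectively, contribute coefficients of types (ii) and (iii); and the degree~$6$/$7$ elements supported on four or five cameras yield the $3\times 3$ center-matrix minors of type (iv).

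The main conceptual obstacle is not the reduction step but the verification itself: one needs to explain \emph{why} only these four coefficient shapes can appear. A clean way to anticipate the list is to note that the generators of $\ICtm$ are $3\times 3$ minors of a matrix whose columns have the form $C(t_i)^T \ell_i = (\ell_{1,i}, \ell_{2,i}, \ell_{3,i}, t_{1,i}\ell_{1,i}+t_{2,i}\ell_{2,i}+t_{3,i}\ell_{3,i})^T$, so the $\bt$-coefficients of any minor are themselves minors (of sizes $0, 1, 2,$ or $3$) of submatrices involving the symbolic centers $(t_{1,k}, t_{2,k}, t_{3,k}, -1)^T$; that is precisely the matrix in (iv). Buchberger's algorithm combines and reduces these minors through S-pair computations, and the claim is that after reduction only minors of size $0, 1$ (type (i)/(ii)), products of two size-$1$ minors (type (iii)), or size-$3$ minors (type (iv)) survive as leading-term coefficients. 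Verifying that no other monomial combinations in $\bt$ arise is the crux, and in our plan this is discharged by the Macaulay2 computation at $m = 5$, together with \Cref{thm:GB} to transfer the conclusion to arbitrary $m$.

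Finally, to make the proof self-contained rather than computer-assisted, one could instead give explicit formulas for the elements of $G_5$ (grouped by degree as in~\Cref{lem_5}) and check the four shapes by hand; but since \Cref{thm:GB} has already been proved via a computation in small $m$, the same computational regime is consistent with the rest of the section.
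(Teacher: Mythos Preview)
Your proposal is correct and matches the paper's own proof: verify the claim for $m \le 5$ by direct computation in \texttt{Macaulay2}, then invoke \Cref{thm:GB} to conclude for all $m$. The additional heuristic discussion you give about why only these four coefficient shapes arise is helpful context, but neither the paper nor your argument relies on it formally.
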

\begin{proof}
    The statement for $2\leq m\leq 5$ is verified using again \texttt{Macaulay2}. The case $m\geq 5$ follows using \Cref{thm:GB}: the elements in $G_m$ only depend on variables corresponding to at most 5 cameras.  
\end{proof}

\subsection{Specialization to Generic Translational Cameras}\label{sec:specialization}

In this section, we pass from the IT ideal $\ICtm$ to its specialization $\ICtbarm$ by \new{fixing} scalars $\bs = (s_1, \ldots , s_m) \in (\CC^3)^m$.
More formally, let $\ICtbarm$ denote the extension of the ideal $\ICtm$ through the ring homomorphism 
\begin{equation}\label{def_eval_phi}
\phi_{\bs} : \CC [\bl , \bt ]\to \CC [\bl ]
\end{equation}
defined by $\bl \mapsto \bl$ and $\bt \mapsto \bs$; i.e., $\phi_{\bs}$ replaces the $\bt$-variables of polynomials in $\CC [\bl , \bt ]$ by~$\bs$.
Similar to before, 
\[\mathcal{C}(\bs) := (C(s_1), \ldots , C(s_m))\in(\mathbb C^{3\times 4})^m\]
denotes the translational camera arrangement $\ICtbarm$ its associated $3\times 3$ minor ideal. The goal of this section is to prove that for general $\bs$, the image of $G_m$ under $\phi_{\bs}$ is again a Gr\"obner basis. For this, we need a definition.

\begin{definition}\label{def:center-generic}
For $\bs\in(\mathbb C^3)^m$, we say the camera arrangement $\mathcal{C}(\bs)$ is \emph{center-generic} if
\begin{itemize}
    \item[\new{(i)}] $(s_{i_1, j_1} - s_{i_2 ,j_2}) \ne 0$ 
for all $1\le i_1 < i_2 \le 3$ and $1\le j_1, j_2\le m,$
    \item[\new{(ii)}] All $3\times 3$ minors of the matrix~\eqref{eq:center-matrix-specialized} below are nonzero:
    \begin{equation}\label{eq:center-matrix-specialized}
   \begin{bmatrix}
        s_{1, 1} & s_{1,2 } & \cdots & s_{1,m } \\
        s_{2,1} & s_{2 ,2} & \cdots & s_{2,m} \\
        s_{3,1} & s_{3,2} & \cdots & s_{3,m} \\
        -1 & -1 & \cdots  & -1 
        \end{bmatrix}.
        \end{equation}
\end{itemize}
\end{definition}
The genericity conditions of~\Cref{def:center-generic} ensure that all leading coefficients in~\Cref{prop:GB} specialize to nonzero constants.
Note that Condition (i) implies that all products $(s_{i_1 ,j_1} - s_{i_2, j_2}) \cdot (s_{i_3, j_3} - s_{i_4 ,j_4})$ are nonzero.

We now state the main result of this section. 
\begin{theorem}\label{thm:gb-LMI}
Let $\bs\in (\CC^3)^m$, such that $\mathcal C(\bs)$ is center-generic. Let $G_m$ be the Gr\"obner basis from \Cref{thm:GB}. Then, the specialization $\phi_{\bs}(G_m)$ is a Gr\"obner basis for the line multiview ideal $I(\mathcal L_{\ICtbarm})$ with respect to the GRevLex order.
\end{theorem}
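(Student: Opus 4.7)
The plan is a standard Gr\"obner basis specialization argument. The monomial order $<$ on $\CC[\bl, \bt]$ is a product order with $\bl$-variables above $\bt$-variables, so for each $g \in G_m$, collecting the terms at the highest $\bl$-monomial yields
\[
g \;=\; f_g(\bt)\, \bl^{\alpha(g)} \;+\; r_g,
\]
with $\bl^{\alpha(g)}$ the GRevLex-leading $\bl$-monomial of $g$, $f_g(\bt) \in \CC[\bt]$ its coefficient, and $r_g$ involving only strictly smaller $\bl$-monomials. Specializing gives $\phi_{\bs}(g) = f_g(\bs)\,\bl^{\alpha(g)} + \phi_{\bs}(r_g)$, so whenever $f_g(\bs) \ne 0$ the GRevLex-leading $\bl$-monomial of $\phi_{\bs}(g)$ remains $\bl^{\alpha(g)}$. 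The crucial combinatorial input is \Cref{prop:GB}: $f_g(\bt)$ must be one of the four listed forms, and the non-vanishing conditions in \Cref{def:center-generic} are precisely tailored so that each such form specializes to a nonzero scalar at $\bs$.

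Granted this non-vanishing, one applies the standard specialization lemma for Gr\"obner bases under product orders (in the spirit of the argument used in \cite{atlas}). Concretely, for any pair $g, g' \in G_m$ the $S$-pair $S(\phi_{\bs}(g), \phi_{\bs}(g'))$ in $\CC[\bl]$ can be lifted to a polynomial $P \in \ICtm$ with $\phi_{\bs}(P) = S(\phi_{\bs}(g), \phi_{\bs}(g'))$, for instance $P = \bl^{\gamma - \alpha(g)} g/f_g(\bs) - \bl^{\gamma - \alpha(g')} g'/f_{g'}(\bs)$ where $\bl^\gamma = \mathrm{lcm}(\bl^{\alpha(g)}, \bl^{\alpha(g')})$. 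Since $G_m$ is a Gr\"obner basis in $\CC[\bl, \bt]$, $P$ admits a standard representation $P = \sum_h q_h h$; applying $\phi_{\bs}$ and using that each $\phi_{\bs}(h)$ has leading $\bl$-monomial $\bl^{\alpha(h)}$, one shows $\phi_{\bs}(P) = \sum_h \phi_{\bs}(q_h) \phi_{\bs}(h)$ is a standard representation with respect to $\phi_{\bs}(G_m)$ under GRevLex. Buchberger's criterion then certifies that $\phi_{\bs}(G_m)$ is a Gr\"obner basis for $\phi_{\bs}(\ICtm) = \ICtbarm$.

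Finally, Condition (ii) of \Cref{def:center-generic} implies no four camera centers of $\mathcal{C}(\bs)$ are collinear (a set of four collinear centers would force some $3\times 3$ minor of~\eqref{eq:center-matrix-specialized} to vanish), so \Cref{thm:main_ideal} gives $\ICtbarm = I(\mathcal{L}_{\mathcal{C}(\bs)})$, completing the argument. The main obstacle is executing the specialization lemma carefully: $S$-pair cancellation may cause the $\bl$-leading exponent of the lift $P$ to drop after substitution, and one must track this drop through the reduction process to ensure the standard-representation property survives. The non-vanishing of the coefficients $f_g(\bs)$ guaranteed by \Cref{def:center-generic} is precisely what makes this bookkeeping go through uniformly over all $g \in G_m$.
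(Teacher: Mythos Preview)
Your proposal is correct and follows essentially the same route as the paper. The paper invokes the specialization result \cite[\S 4.7 Theorem 2]{CLO15} directly as a black box for the step you sketch via $S$-pairs and Buchberger's criterion; otherwise the ingredients are identical (use \Cref{prop:GB} together with \Cref{def:center-generic} to guarantee non-vanishing of the $\bl$-leading coefficients, then apply \Cref{thm:main_ideal} after observing that condition~(ii) forbids four collinear centers).
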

\begin{proof}
It follows from the proof of \cite[\S 4.7 Theorem 2]{CLO15} that, if none of the leading coefficients in $\bt$ that appear in $G_m$ vanish at $\bs$, then $\phi_{\bs}(G_m)$ is the Gr\"{o}bner basis for 
\[\phi_{\bs}(\ICtm) = \left\langle \text{$3\times3$-minors of }  \begin{bmatrix} C(s_1)^T\ell_{1} & \cdots  & C(s_m)^T\ell_{m} \end{bmatrix} \right\rangle = \ICtbarm.\] 
Therefore, \Cref{prop:GB} implies that $\phi_{\bs}(G_m)$ is a Gr\"obner basis for  $ \ICtbarm$. Since $G_m$ is a Gröbner basis with respect to the product of GRevLex orders on the subrings $\CC [\bl]$ and~$\CC [\bt ]$, $\phi_{\bs}(G_m)$ is a Gröbner basis with respect to GRevLex on $\CC [\bl]$. 

Furthermore, the camera center $c_j:=\ker C(s_j)$ is spanned by $(s_{1,j}, s_{2,j}, s_{3,j}, -1)$. Since $\ICtbarm$ is center-generic, $\bs$ satisfies condition \new{(ii)}\ in \Cref{def:center-generic}. This implies that no four camera centers are collinear. \Cref{thm:main_ideal} implies $\ICtbarm = I(\mathcal L_{\ICtbarm})$. 
\end{proof}

\medskip
\section{Group Action by Coordinate Change}\label{sec:group} We want to extend the result from the previous section on center-generic translational cameras to any camera arrangement, not necessarily translational, with no three cameras collinear.
Recall from \cref{def_group_action} the action of $G =  \mathrm{PGL}_4\times \mathrm{PGL}_3^m$ on camera arrangements
\begin{align*}
(H,H_1, \ldots, H_m) \cdot (C_1, \ldots, C_m) = (H_1 C_1 H^{-1}, \ldots, H_m C_m H^{-1}). 
\end{align*}
We show that it preserves line multiview ideals in the appropriate way.

Let $a,b\in \PP^3$. Then, 
\begin{align}\label{eq: crossP}
    (H_iC_ia)\times (H_iC_ib)=(\det H_i)H_i^{-T} (C_ia\times C_ib).
\end{align}
This motivates the introduction of the following ring isomorphism: 
\begin{align*}
    L_h : \CC [\bl ] \to \CC [\bl ], \quad
    \ell_i \mapsto H_i^{-T} \ell_i.
\end{align*}
We identify $L_h$ with its map on the level of varieties sending $\ell_i\in \PP^2$ to $H_i^{-T}\ell_i\in \PP^2$. 

\begin{proposition} 
\label{prop: group action}
Let $h = (H,H_1, \ldots, H_m)\in G$ and let $\mathcal{C} = (C_1, \ldots, C_m)$ be a camera arrangement.
Then
$$
\mathcal{L}_{h\cdot \mathcal{C}} = L_h( \LC),
$$
or equivalently
$$
 I(\mathcal{L}_{h\cdot \mathcal{C}})  = L_h^{-1} (I(\LC)).
$$
\end{proposition}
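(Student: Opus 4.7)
The plan is to verify $\mathcal{L}_{h\cdot \mathcal{C}} = L_h(\LC)$ at the level of the parametrization $\Upsilon$ using the identity (\ref{eq: crossP}), and then to deduce the equivalent ideal statement as a formal consequence. The key calculation is the following: for a line $L\subset \PP^3$ spanned by points $a,b$ not meeting any camera center, the $i$-th coordinate of $\Upsilon_{h\cdot \mathcal{C}}(L)$ is
$$
(H_i C_i H^{-1} a) \times (H_i C_i H^{-1} b),
$$
and applying (\ref{eq: crossP}) with the points $H^{-1}a$ and $H^{-1}b$ in place of $a,b$ gives
$$
(H_i C_i H^{-1} a) \times (H_i C_i H^{-1} b) \;=\; (\det H_i)\, H_i^{-T}\bigl( (C_i H^{-1}a) \times (C_i H^{-1}b)\bigr).
$$
The scalar $\det H_i$ is irrelevant projectively, and the remaining factor is $L_h$ applied to the $i$-th coordinate of $\Upsilon_\mathcal{C}(H^{-1}L)$. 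This yields the pointwise identity $\Upsilon_{h\cdot\mathcal{C}}(L) = L_h\bigl(\Upsilon_\mathcal{C}(H^{-1}L)\bigr)$ on the common open set where both rational maps are defined.

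Next I would observe that $H \in \mathrm{PGL}_4$ acts as an automorphism of $\mathbb G$, so as $L$ ranges over $\mathbb G$, so does $H^{-1}L$. Consequently the images agree:
$$\Upsilon_{h\cdot \mathcal{C}}(\mathbb G) \;=\; L_h\bigl(\Upsilon_\mathcal{C}(\mathbb G)\bigr).$$
Taking Zariski closures and using that the linear automorphism $L_h$ of $(\PP^2)^m$ is a homeomorphism in the Zariski topology (and hence commutes with closure) gives the first asserted equality $\mathcal{L}_{h\cdot \mathcal{C}} = L_h(\LC)$.

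Finally, for the equivalent ideal statement, I would invoke the general fact that if $\psi$ is an automorphism of a variety $Y$ whose induced ring automorphism on the coordinate ring is denoted by the same symbol, then for any closed subvariety $V \subseteq Y$ one has $I(\psi(V)) = \psi^{-1}(I(V))$. Applied to $\psi = L_h$ and $V = \LC$, this gives $I(\mathcal{L}_{h\cdot \mathcal{C}}) = L_h^{-1}(I(\LC))$. I do not expect any serious difficulty in this proof: the only points requiring a bit of care are keeping track of the nonzero scalar $\det H_i$ and the routine passage from the image of the parametrization to its Zariski closure. The substantive content is entirely contained in (\ref{eq: crossP}).
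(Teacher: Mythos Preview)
Your proof is correct and follows essentially the same approach as the paper: the paper encodes your pointwise computation $\Upsilon_{h\cdot\mathcal{C}}(L) = L_h\bigl(\Upsilon_\mathcal{C}(H^{-1}L)\bigr)$ as a commutative diagram with vertical arrows $\wedge^2 H$ and $L_h$, then takes closures and derives the ideal statement from the same general fact you invoke. The only cosmetic difference is that the paper presents the equivalence of the two statements first and the diagram computation second, whereas you proceed in the opposite order.
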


\begin{proof}
The equivalence of the statements in \Cref{prop: group action} comes from the following general fact in commutative algebra:
Given a morphism of projective varieties $\varphi: X \to Y$ there is a corresponding map of graded coordinate rings $\varphi^{\#}: S(Y) \mapsto S(X)$. The ideal of the image $\varphi(X)$ is the kernel of $\varphi^{\#}$. In our setting $\varphi:\mathcal L_{\mathcal C}\to (\mathbb P^2)^m$ is the action of a group element $h$ on the multiview variety $\mathcal L_{\mathcal C}$ and $\varphi^{\#}$ is the composition of $L_h: \mathbb{C}[\ell] \to \mathbb{C}[\ell]$ and the projection $\mathbb{C}[\ell] \to \mathbb{C}[\ell]/I(\LC)$. This implies that the kernel is $L_h^{-1}(I(\LC))= I( L_h(\LC) )$. 

Therefore, it suffices to show $\mathcal{L}_{h\cdot \mathcal{C}} = L_h( \LC )$ to prove the proposition. The argument follows from the following commutative diagram of vector spaces
\begin{center}
 \begin{tikzpicture}[every node/.style={midway}]
        \matrix[column sep={8em,between origins}, row sep={2em}] at (0,0) {
            \node(A) {$\mathbb{C}^4$}; & \node(B) {$\mathbb{C}^{3}$}; ; \\
            \node(C) {$\mathbb{C}^4$}; & \node(D) {$\mathbb{C}^{3}$}; ; \\
        };
        \draw[->] (A) -- (B) node[above]{$C_i$};
        \draw[->] (C) -- (D) node[below]{$ h\cdot C_i $};
        \draw[->] (A) -- (C) node[left]{$H$} ;
        \draw[->] (B) -- (D) node[right]{$ H_i$} ;
    \end{tikzpicture}.
\end{center}
where $h \cdot C_i = H_iC_iH^{-1}$. This induces a commutative diagram of camera maps
\begin{center}
 \begin{tikzpicture}[every node/.style={midway}]
        \matrix[column sep={8em,between origins}, row sep={2em}] at (0,0) {
            \node(A) {\new{$\mathbb G$}}; & \node(B) {$(\mathbb{P}^{2})^m$}; ; \\
            \node(C) {\new{$\mathbb G$}}; & \node(D) {$(\mathbb{P}^{2})^m$}; ; \\
        };
        \draw[dashed,->] (A) -- (B) node[above]{$\Upsilon_{\mathcal{C} }$} ;
        \draw[dashed,->] (C) -- (D) node[below]{$ \Upsilon_{h \cdot \mathcal{C} }$};
        \draw[->] (A) -- (C) node[left]{$\wedge^2 H$} ;
        \draw[->] (B) -- (D) node[right]{$ L_h  $} ;
    \end{tikzpicture}.
\end{center}
The map $\wedge^2 H$ is an isomorphism that sends a line $L$ spanned by $x,y$, to the line spanned by $Hx, Hy$. Now by commutativity, we can compute the closure of the image in the bottom-right corner of the diagram as
$$
    \mathcal{L}_{h\cdot \mathcal{C}} = \overline{\mathrm{Im}(\Upsilon_{h\cdot \mathcal{C} })} = \overline{\mathrm{Im}(\Upsilon_{h\cdot \mathcal{C} } \circ (\wedge^2 H) )} = \
    \overline{\mathrm{Im}(L_h \circ \Upsilon_{\mathcal{C} } ) }.$$
Now, a polynomial $f\in \CC [\ell ]$ vanishes on $\mathrm{Im}(L_h \circ \Upsilon_{\mathcal{C} })$, if and only if $L_h(f)$ vanishes on $\mathrm{Im}(\Upsilon_{\mathcal{C} })$. This implies 
\phantom\qedhere
\[\pushQED{\qed}\mathcal{L}_{h\cdot \mathcal{C}}=\overline{\mathrm{Im}(L_h \circ \Upsilon_{\mathcal{C} } ) } = L_h\big(\overline{(\mathrm{Im}(\Upsilon_{\mathcal{C} })}\big) = L_h (\LC).\qedhere
\popQED\]
\end{proof}

\begin{proposition}\label{prop:when_is_transport_possible}
$\mathcal{C}$ has the property that no three cameras are collinear if and only if there exists $h\in G$ such that $h\cdot \mathcal{C}$ is center-generic.
\end{proposition}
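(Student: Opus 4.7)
The plan is to prove both directions separately. The forward direction is essentially immediate from \Cref{def:center-generic}. The backward direction requires a two-step argument: first, choose $H\in \PGL_4$ generically so that the transformed centers are in ``general affine position'', then define each $H_i\in \PGL_3$ so that each $H_iC_iH^{-1}$ takes translational form.

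For ($\Rightarrow$), suppose $h=(H,H_1,\ldots,H_m)\in G$ is such that $h\cdot\mathcal{C}$ is center-generic. By condition (ii) of \Cref{def:center-generic}, for any three columns $j_1,j_2,j_3$ of the center matrix, at least one $3\times 3$ minor of the $4\times 3$ submatrix is nonzero. Hence this submatrix has rank $3$, and the three centers of $h\cdot\mathcal{C}$ are projectively non-collinear in $\PP^3$. Since the centers of $h\cdot\mathcal{C}$ are $Hc_1,\ldots,Hc_m$ and $H\in \PGL_4$ preserves collinearity, no three centers of $\mathcal{C}$ can be collinear.

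For ($\Leftarrow$), assume no three centers of $\mathcal{C}$ are collinear. For any $H\in \PGL_4$ satisfying the non-empty open condition $(Hc_i)_4\neq 0$ for all $i$, we normalize $Hc_i=(s_{1,i},s_{2,i},s_{3,i},-1)^T$. Using that $(s_i,-1)^T$ spans the kernel of $HC_iH^{-1}$, a direct computation shows $HC_iH^{-1}=[A_i \mid A_is_i]$ for some $A_i\in \GL_3$; setting $H_i:=A_i^{-1}$ yields the translational form $H_iC_iH^{-1}=[I_3\mid s_i]$. It therefore suffices to find $H$ for which $\bs=(s_1,\ldots,s_m)$ is center-generic.

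Conditions (i) and (ii) of \Cref{def:center-generic} are conjunctions of finitely many polynomial non-vanishing conditions in $\bs$, each involving at most three of the $s_j$. Since $\PGL_4$ is irreducible, it is enough to show that each such vanishing locus defines a \emph{proper} closed subset of $\PGL_4$; the complement of their union is then non-empty. The key geometric input is that $\PGL_4$ acts transitively on ordered triples of non-collinear points in $\PP^3$ (and on ordered pairs of distinct points). Under the hypothesis, every triple of centers $(c_{j_1},c_{j_2},c_{j_3})$ is non-collinear, so the rational map $\PGL_4\dashrightarrow (\CC^3)^3$, $H\mapsto(s_{j_1},s_{j_2},s_{j_3})$, has Zariski dense image. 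Consequently, no non-trivial polynomial in $s_{j_1},s_{j_2},s_{j_3}$ vanishes identically on the image, and each minor condition in (ii) defines a proper subset of $\PGL_4$; an analogous argument with pairs handles (i). The main step is establishing the transitivity claim, which is a classical fact proved by extending a triple of non-collinear points in $\PP^3$ to a basis of $\CC^4$; everything else follows by standard irreducibility and dense-image arguments.
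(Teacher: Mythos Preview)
Your proof is correct, and the forward direction matches the paper's. For the backward direction, however, you take a genuinely different and more conceptual route than the paper.

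The paper proceeds by first reducing to $m=4$, splitting into two cases (coplanar vs.\ non-coplanar centers), normalizing each case to an explicit camera arrangement, and then treating the entries of $H$ as indeterminates: the kernels $c_i(H)$ are computed symbolically, and one verifies by direct computation that the relevant minors and differences are nonzero polynomials in the $h_{i,j}$. The case $m>4$ is then handled by intersecting the dense Zariski opens obtained from all $4$-element subsets.

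Your argument bypasses both the case split and the symbolic computation. By observing that every condition in \Cref{def:center-generic} involves at most three of the $s_j$, and that $\PGL_4$ acts transitively on ordered triples of non-collinear points (resp.\ pairs of distinct points, single points), you conclude directly that each rational map $H\mapsto (s_{j_1},s_{j_2},s_{j_3})$ is dominant, so no nonzero polynomial in these variables can vanish on all of $\PGL_4$. This handles every $m$ at once and makes the role of the hypothesis ``no three centers collinear'' transparent: it is exactly what guarantees transitivity on the relevant triples. The paper's approach, by contrast, is more explicit and amenable to computer verification, and its reduction-to-four-cameras step mirrors the structure used elsewhere in the paper (e.g.\ in the Gr\"obner basis arguments).
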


\begin{proof}
First, observe that condition (ii) in~\Cref{def:center-generic} implies that no three centers are collinear when $\mathcal{C}$ is center-generic.
This gives one direction since camera collinearity is a $G$-invariant property.

For the converse, let $\mathcal{C} = (C_1, \ldots, C_m)$ have the property that no three camera centers are collinear.
Consider first the case $m=4.$
If the camera centers  are noncoplanar, then up to the $G$-action we may assume that they form the standard basis $e_1, e_2, e_3, e_4 \in \PP^3.$
In the noncoplanar case, we may assume the centers are $e_1, e_2, e_3, e_1+e_2+e_3 \in \PP^3.$
Correspondingly, we may assume our camera matrices are
\begin{equation}\label{eq:special-cameras}
C_1 = \left[\begin{smallmatrix}
      1&0&0&0\\
      0&1&0&0\\
      0&0&1&0
      \end{smallmatrix}\right] , \quad 
C_2 = \left[\begin{smallmatrix}
      1&0&0&0\\
      0&1&0&0\\
      0&0&0&1
      \end{smallmatrix}\right] , \quad C_3 = \left[\begin{smallmatrix}
      1&0&0&0\\
      0&0&1&0\\
      0&0&0&1
      \end{smallmatrix}\right]
\end{equation}
and 
$$
      C_4 = \begin{cases}
      \left[\begin{smallmatrix}
      0&1&0&0\\
      0&0&1&0\\
      0&0&0&1
      \end{smallmatrix}\right], & \text{ in the non coplanar case}\\[1em]
  \left[\begin{smallmatrix}
      1&0&-1&0\\
      0&1&-1&0\\
      0&0&0&1
      \end{smallmatrix}\right], & \text{ in the coplanar case.}
      \end{cases}$$
In either case, let $H$ be a $4\times 4$ matrix whose entries are the indetermines of the polynomial ring $R=\CC [h_{1,1} , \ldots , h_{4,4}],$ and define $A_i (H) = C_i H$ for $i=1, \ldots , 4.$
The kernel of each matrix $A_i (H)$ is a free $R$-module of rank $1$ generated by some $c_i (H) \in R^{4\times 1}$.
If we construct the matrix 
\[
\left[
\begin{array}{c|c|c|c}
c_1( H) & c_2(H) & c_3 (H) & c_4 (H)
\end{array}
\right] \in R^{4\times 4},
\]
then we may verify by direct computation that the following polynomials defined in terms of this matrix are nonzero:
\begin{itemize}
\item[(i)] all $2\times 2$ minors and the differences between any two entries in the same column, and 
\item[(ii)] all $3\times 3$ minors and the last entry of each column.
\end{itemize}
These conditions correspond to conditions (i) and (ii) in~\Cref{def:center-generic}: specializing $H$ to a generic invertible matrix, we obtain cameras $G$-equivalent to $\mathcal{C}$ which can be made center-generic after acting on the left by the subgroup $\PGL_3^4 \subset G.$

When $m>5,$ fix a set $S = \{ i_1, i_2, i_3, i_4 \}$  with $1\le i_1 < i_2 < i_3 < i_4 \le m.$ 
For all $H$ inside of a dense Zariski open $U_S \subset \CC^{4\times 4}$, we have by the previous argument that $(C_{i_1} H, C_{i_2} H, C_{i_3} H, C_{i_4} H)$ is equivalent to a center-generic $4$-tuple up to left-multiplication.
Thus, if we take \[
H \in \displaystyle\bigcap_{S \in \binom{[m]}{4}} U_S,
\]
then $(C_1 H, \ldots , C_m H)$ is $G$-equivalent to $m$ center-generic cameras.
\end{proof}


\medskip
\section{Set-Theoretic Equations for Line Multiview Varieties}\label{s: Set-Theory} In the case that four or more cameras are collinear, the rank condition of ~\Cref{thm:main_ideal} is not sufficient to describe the line multiview variety\new{,} even set-theoretically. \new{In \cite[Section~2]{line-multiview}, an example is computed with \new{four collinear cameras}
where the rank condition provides two components. One is the line multiview variety, and the other is 4-dimensional and corresponds to the tuples of back-projected planes that all contain the line spanned by the collinear centers. Using elimination of variables in \texttt{Macaulay2}, the authors find one additional equation in the variables of all four lines, that together with the rank condition cuts out the line multiview variety.} Elimination, however, is computationally demanding. Here we describe ideals that set-theoretically determine any line multiview variety with pairwise distinct centers without using elimination. 

Throughout this section $U\vee V$ denotes the linear space spanned by $U$ and $V$.

\subsection{Quadrics of the Line Multiview Variety}
To give equations for the line multiview variety $\mathcal L_{\mathcal C}$ we first need \new{to characterize} points on $\mathcal L_{\mathcal C}$ in terms of associated quadric surfaces. \new{Firstly}, let $\sigma\subset [m]$ index a subset of collinear cameras. Let $E_\sigma$ denote the \emph{baseline} spanned by the collinear camera centers $c_i$ for $i\in \sigma$, and let
$$
 E_\sigma^* := \text{ any line disjoint from }E_\sigma.
$$
For concreteness, one may take the dual line
$\{x \in\mathbb P^3 \mid x^*\, y=0 \text{ for all } y\in E_\sigma\}$
with respect to the Hermitian inner product on $\mathbb C^4$. 

\begin{remark}
In \cite{line-multiview} the authors always use the dual line for $E_\sigma^*$. Nevertheless, the results in \cite{line-multiview} are unchanged when replacing the dual line with any other line, which does not intersect the baseline $E_\sigma$. The reason why we use this more general definition is that in \Cref{lem: action-det-phi} we consider the action of $\mathrm{PGL}_4$ on $\mathbb P^3$. This action does not preserve \new{Hermitian} duality between lines, but it preserves that two lines do not intersect. 
\end{remark}

For $i\in \sigma$ and a line $\ell_i$ in the image plane $\PP^2$ that is in general position with respect to the camera center $c_i,$ we may construct another line $F_\sigma (\ell_i)$ contained in the back-projected plane $H_i$ which passes through $c_i$ and $E_\sigma^*.$
As in~\cite{line-multiview}, we use the notation
\begin{equation}\label{def_F_sigma}
F_\sigma(\ell_i) := c_i\vee \big(H_i\cap E_\sigma^*\big),
\end{equation}
which indeed defines a line provided that $E_\sigma^* \not\subset H_i$.
Otherwise, it is the plane $c_i\vee E_\sigma^*$. In Figure \ref{fig:F_line} we illustrate this definition with a picture.

The next result is a rephrasing of \cite[Theorem 2.6]{line-multiview}.

\begin{theorem}\label{thm: mainQuad} We have $\ell=(\ell_1,\ldots,\ell_m)\in \LC$ if and only if the following three conditions hold:
\begin{enumerate}
    \item all back-projected planes $H_i$ meet in at least a line,
    \item for every maximal set $\sigma\subseteq[m]$ indexing collinear cameras with $|\sigma|\ge 4,$ there is a quadric surface $Q_\sigma=Q_\sigma(\ell)\subseteq \PP^3$, depending on $\ell$, such that $E_\sigma, E_\sigma^*\subseteq Q_\sigma$ and,
    \item if $F_\sigma(\ell_i)$\new{, for some $i\in \sigma$,} is a line, then $F_\sigma(\ell_i)\subseteq Q_\sigma$.
\end{enumerate}
\end{theorem}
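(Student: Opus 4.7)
Since the theorem is presented as a rephrasing of \cite[Theorem 2.6]{line-multiview}, my plan has two parts: sketch the geometric content of the equivalence, and verify that the added flexibility in choosing $E_\sigma^*$ as any line disjoint from $E_\sigma$ (rather than the Hermitian dual used in the reference) does not change the conditions. The key classical fact I will rely on throughout is that three pairwise-skew lines in $\PP^3$ lie on a unique smooth quadric, whose two rulings consist of the lines themselves (extended by a pencil) and of all lines meeting each of the three.

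For the forward direction, I would start from $\ell = \Upsilon_{\mathcal{C}}(L)$ for a 3D line $L$ passing through no camera center, handling non-generic $\ell$ by a closure argument analogous to \Cref{le: XC}. Condition~1 is immediate since $L \subseteq H_i$ for all $i$. For Conditions~2 and 3, I fix a maximal collinear $\sigma$ with $|\sigma|\geq 4$. Generically the three lines $E_\sigma$, $E_\sigma^*$, $L$ are pairwise skew, so the classical fact furnishes a unique smooth quadric $Q_\sigma$ containing them. Each line $F_\sigma(\ell_i) = c_i \vee (H_i \cap E_\sigma^*)$ meets $E_\sigma$ at $c_i$, meets $E_\sigma^*$ at $H_i \cap E_\sigma^*$, and meets $L$ inside the plane $H_i$ (which contains both $L$ and $F_\sigma(\ell_i)$); hence $F_\sigma(\ell_i)$ lies on the opposite ruling of $Q_\sigma$ whenever it is a line, giving Conditions~2 and~3.

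For the reverse direction, assuming the three conditions I would construct $L$ with $\Upsilon_{\mathcal{C}}(L) = \ell$. Condition~1 guarantees $\bigcap_i H_i$ contains at least one line, providing candidates. The subtle case is when some maximal collinear $\sigma$ with $|\sigma|\geq 4$ is present: then $\bigcap_{i \in \sigma} H_i$ typically contains a pencil of lines through $E_\sigma$, most of which do not project correctly to $\ell$. The quadric $Q_\sigma$ from Condition~2, together with the constraint that every $F_\sigma(\ell_i)$ lies on $Q_\sigma$ (Condition~3), pins down a single ruling of $Q_\sigma$ on which the admissible $L$ must lie. This is the main obstacle: I need to show that the ruling-defined $L$ is globally consistent across different maximal collinear subsets $\sigma$ and compatible with Condition~1, and I must treat the degenerate subcase where $F_\sigma(\ell_i)$ is the plane $c_i \vee E_\sigma^*$ rather than a line (so Condition~3 gives no information at that index). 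Both points are addressed by the original argument of \cite[Theorem 2.6]{line-multiview}, which I would invoke directly rather than reprove.

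Finally, to justify the relaxed choice of $E_\sigma^*$, I would observe that any two lines disjoint from $E_\sigma$ are related by an element of $\PGL_4$ fixing $E_\sigma$, and under such a transformation the auxiliary objects $Q_\sigma$ and $F_\sigma(\ell_i)$ transform compatibly; alternatively, the reverse direction shows that Conditions~1--3 characterize the intrinsic property $\ell \in \LC$, which does not mention $E_\sigma^*$, so any admissible choice yields equivalent conditions.
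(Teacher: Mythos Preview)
Your forward direction matches the paper's proof almost exactly: restrict to generic $\ell$ in the image, take the unique smooth quadric through the three skew lines $E_\sigma,E_\sigma^*,L$, and observe that each $F_\sigma(\ell_i)$ meets all three so must lie on $Q_\sigma$; then close up. The paper phrases the last step as ``$F_\sigma(\ell_i)\cap Q_\sigma$ contains three distinct points'' rather than invoking the opposite ruling, but this is the same argument.

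For the reverse direction your sketch and the paper's diverge in organization, though both ultimately defer to \cite[Theorem 2.6]{line-multiview}. You propose to use the ruling of $Q_\sigma$ to \emph{select} the correct $L$, and then worry about global consistency of this choice across different maximal collinear sets $\sigma$. The paper instead starts from Condition~1: take \emph{any} line $L$ in $\bigcap_i H_i$, and do a case analysis on how many camera centers $L$ meets (zero, one, two, or at least three). Only in the last case does $L$ coincide with some baseline $E_\sigma$, and only then are Conditions~2--3 invoked, together with a further split into whether the $F_\sigma(\ell_i)$ are pairwise disjoint (smooth $Q_\sigma$) or not ($Q_\sigma$ a plane pair). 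This organization sidesteps your consistency concern entirely: one never needs to reconcile quadrics coming from different $\sigma$'s, because the fixed $L$ singles out at most one relevant $\sigma$. Your approach is not wrong, but the consistency issue you flag is an artifact of trying to build $L$ from the quadrics rather than from Condition~1.

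Your final paragraph on the freedom in choosing $E_\sigma^*$ is a nice addition; the paper handles this in a remark preceding the theorem rather than in the proof itself, with essentially the same content.
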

\begin{proof} This follows from the proof of \cite[Theorem 2.6]{line-multiview}. We give a summary here. We assume the reader is familiar with the contents of \cite{line-multiview}. Recall that we assume that all centers are distinct and $m\ge 2$.

$\Rightarrow)$ Since conditions 1--3 describe a Zariski-closed subset of $(\PP^2)^m ,$ it is enough to show that a generic point $(\ell_1,\ldots,\ell_m)$ in the image of $\Upsilon_{\mathcal{C}}$ satisfies these three conditions. 
Thus, we may assume that all $F_\sigma(\ell_i)$ are disjoint lines and that the back-projected planes $H_i$ meet in exactly a line $L$ disjoint from both $E_\sigma$ and $E_\sigma^*$. 
This gives the first condition above and also implies $L\neq F_\sigma (\ell_i).$ The quadric $Q_\sigma$ in the second condition is uniquely determined by the property that it contains the pairwise-disjoint lines $E_\sigma,E_\sigma^*,L$. Finally, each of the lines $F_\sigma(\ell_i)$ is contained in the quadric $Q_\sigma$\new{,} because the intersection $F_\sigma (\ell_i) \cap Q_\sigma $ contains the three distinct points\new{,} where $F_\sigma(\ell_i)$ meets $E_\sigma,E_\sigma^*,$ and $L$.

$\Leftarrow)$ Suppose $\ell = (\ell_1, \ldots , \ell_m)$ satisfies conditions 1--3.
Let $L$ be a line where all back-projected planes meet.
If $L$ meets none of the $m$ camera centers, then $\ell$ lies in the image of $\Upsilon_{\mathcal{C}}.$ If $L$ meets exactly one center, then $\ell\in \LC$ by \cite[Lemma 2.8]{line-multiview}. If $L$ meets exactly two centers, then $\ell\in \LC$ by Case 1 of the proof of \cite[Theorem 2.6]{line-multiview}. If $L$ meets three or more centers, we argue as follows. Let $\sigma$ index all camera centers contained in $L$, so that $L = E_\sigma \subset H_i$ for all $i\in \sigma.$ 
It follows that all $F_\sigma(\ell_i),i\in \sigma,$ are lines, since $E_\sigma^*$ is not contained in $H_i$. If the lines $F_\sigma(\ell_i),i\in \sigma,$ are pairwise-disjoint, then the quadric $Q_\sigma$ is smooth and $\ell\in \LC$ by Case 2 of the proof of \cite[Theorem 2.6]{line-multiview}. 
Otherwise, if $F_\sigma(\ell_i)$ and $F_\sigma (\ell_j)$ meet for distinct $i, j\in \sigma$, then $Q_\sigma$ contains the plane $P_1$ spanned by these two lines. 
It follows that $Q_\sigma$ is the union of two planes $P_1\cup P_2$,  where $P_1$ contains $E_\sigma$ and $P_2$ contains $E_\sigma^*$. 
At most one of the lines $F_\sigma(\ell_i)$ can lie in $P_2$.
Otherwise, two centers of $\sigma$ would lie in $P_2$, which would imply that $P_2$ contains $E_\sigma$, but there is no plane that contains both $E_\sigma$ and $E_\sigma^*$. The fact that $\ell\in \LC$ now follows from arguments of Case 3 of the proof of \cite[Theorem 2.6]{line-multiview}.
\end{proof}

\begin{figure}
\begin{center}
  \begin{tikzpicture}[rotate around x=10, rotate around y=25, rotate around z=0, scale = 1]
  \coordinate (p) at (-1.5,0,-2);
  \coordinate (q) at (0,-1,-3);
  \coordinate (c) at (-4,0,3);
  \coordinate (c2) at (1.5,0,2);
  \coordinate (m) at (-1.125,0,2.5);
  \coordinate (a) at (2,0.5,2);
  \coordinate (b) at (-4,0.5,-2);
  \coordinate (u) at (-1,0.5,0);
  \coordinate (v) at (0.2,0.5,2-1.8*2/3);
  \coordinate (x) at (-3,0,1);

  \draw (m) node[below]{\small $E_\sigma$};
  \draw[fill=teal!10] (c) -- (p) -- (q) -- cycle;
  \draw[fill=violet!10] (c2) -- (p) -- (q) -- cycle;
  \draw[thick] (c2) -- (c);
  \draw[thick] (-1.74,0,-1.4) -- (b) node[left]{\small $E_\sigma^*$};
  \draw[thick] (v) -- (a);
  \draw[thick] (u) -- (-0.4,0,-0.545);
   \draw[dashed] (v) -- (-1.7,0,-1.4);
  \draw[very thick, red] (c) -- (u);
  \draw[very thick, red] (c2) -- (v);
  \draw[fill] (p) circle (0.1);
  \draw[fill] (q) circle (0.1) node[above left] {\small $L$};
  \draw[fill] (c) circle (0.1) node[left] {\small ${c_1}$};
  \draw[fill] (c2) circle (0.1) node[below] {\small ${c_2}$};
  \coordinate (w1) at (-4+1.05*3,1.05*0.5,3-1.05*3);
  \coordinate (w2) at (1.5-1.65*1.3,1.65*0.5,2-1.65*0.6*2);
   \draw[very thick, red] (w1) -- (u);
  \draw[very thick, red] (w2) -- (v);
  \draw[fill] (u) circle (0.1);
  \draw[fill] (v) circle (0.1);
  \draw (x) node[above left] {\textcolor{teal}{\small $H_1$}};
  \draw (q) node[above right] {\textcolor{violet}{\small $H_2$}};
  \draw (-2,0,1) node[right] {\textcolor{red}{\small $F_\sigma(\ell_1)$}};
  \draw (0,-0.1,0) node[below] {\textcolor{red}{\small $F_\sigma(\ell_2)$}};
  \end{tikzpicture}
\end{center}
\caption{\label{fig:F_line} The picture illustrates the definition of $F_\sigma(\ell_i)$ from (\ref{def_F_sigma}). Here, we have $\{1,2\}\subset \sigma$ and we only show $F_\sigma(\ell_1)$ and $F_\sigma(\ell_2)$. The two backprojected planes $H_1$ and $H_2$ depicted as triangles, intersect in a line~$L$. The line $E_\sigma^*$ intersects the green backprojected plane $H_1$ in a point. The line spanned by this point and $c_1$ is the red line $F_\sigma(\ell_1)$. Similarly,~$E_\sigma^*$ intersects the violet backprojected plane $H_2$ in a point that together with $c_2$ spans the red line~$F_\sigma(\ell_2)$. The three distinct lines $L,E_\sigma$ and $E_\sigma^*$ intersect all $F_\sigma(\ell_i),i\in \sigma$.}
\end{figure}

In order to establish the results of the next subsection we need a second lemma, where we \new{determine}
when the quadric from \Cref{thm: mainQuad} is unique.

\begin{lemma} \label{le:Uniqueness} Let $L,L'$ be two disjoint lines in $\PP^3$. Let $c_1,c_2,c_3\in L$ be distinct points and let~$A_1,A_2,A_3$ be three lines such that $c_i\in A_i$ for $1\leq i\leq 3$ and each $A_i$ meets the line $L'$. There is a unique quadric containing $L, L', A_1, A_2$ and $A_3$ if and only if at least two of the $A_i$ are disjoint. 
\end{lemma}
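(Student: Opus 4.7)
The plan is to choose coordinates on $\PP^3$ so that $L = \{y_2 = y_3 = 0\}$ and $L' = \{y_0 = y_1 = 0\}$. Then the set of quadrics containing both $L$ and $L'$ is the $4$-dimensional linear family
\[
Q_M(y) = (y_0, y_1)\, M\, (y_2, y_3)^T
\]
parametrized by $2\times 2$ matrices $M \in \CC^{2\times 2}$. Writing $A_i = \overline{c_i q_i}$ with $q_i := A_i \cap L'$ and interpreting $c_i, q_i$ as 2-vectors in the coordinates of $L$ and $L'$, the direct parametric calculation
$Q_M(sc_i + t q_i) = st\, c_i^T M q_i$
shows that $A_i \subset Q_M$ if and only if $c_i^T M\, q_i = 0$, i.e.\ $M$ is Frobenius-orthogonal to the rank-one matrix $c_i q_i^T$. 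Consequently, there is a unique quadric containing $L, L', A_1, A_2, A_3$ (up to scalar) if and only if the three rank-one matrices $c_i q_i^T$ are linearly independent in $\CC^{2\times 2}$.

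Next I would translate this into geometry on the Segre surface $\Sigma = \{[v w^T] : v,w \in \CC^2\} \subset \PP(\CC^{2\times 2}) = \PP^3$, a smooth quadric isomorphic to $\PP^1 \times \PP^1$. Three distinct points of $\PP^3$ are linearly dependent iff they are collinear, and a line meeting a smooth quadric in three distinct points must (by Bezout) be contained in it, hence is one of the two rulings. Because $c_1, c_2, c_3$ are distinct points of $L$, the three points $[c_i q_i^T]$ cannot share the ruling of $\Sigma$ with fixed first factor; so linear dependence of $c_1 q_1^T, c_2 q_2^T, c_3 q_3^T$ occurs iff they share a ruling with fixed second factor, i.e.\ iff $q_1 = q_2 = q_3$.

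It remains to establish the geometric equivalence $q_1 = q_2 = q_3 \iff$ every pair $A_i, A_j$ meets. The forward direction is obvious: a common $q \in L'$ lies on each $A_i$. For the converse, suppose the three lines pairwise meet. If the three pairwise intersection points were distinct, $A_1, A_2, A_3$ would form a triangle in a common plane, and that plane would contain $L$ (through the distinct $c_1, c_2, c_3$) and at least two distinct points of $L'$ (among $q_1, q_2, q_3$), hence all of $L'$, contradicting $L \cap L' = \emptyset$. Otherwise the three lines are concurrent at some point $p$. A direct coordinate check shows that a point $p$ with $p \vee c_i$ meeting $L'$ for three distinct $c_i \in L$ must itself lie on $L'$, so $p = q_1 = q_2 = q_3$. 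Combining these three steps, ``unique quadric''  is equivalent to ``not all pairs of $A_i$ intersect'', which is ``at least two of the $A_i$ are disjoint''.

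I expect the main obstacle to be the final geometric paragraph: both the triangle exclusion and the forced concurrency on $L'$ rely essentially on the skewness of $L$ and $L'$ and will need a careful case analysis, while the first two steps reduce to elementary linear algebra and the standard geometry of the Segre quadric.
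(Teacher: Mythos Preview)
Your argument is correct and follows a genuinely different route from the paper's. The paper proceeds by classifying quadrics through the two skew lines $L, L'$---such a quadric is either smooth or a pair of planes---and then does a short case analysis: a smooth quadric forces the $A_i$ into one ruling and hence pairwise disjoint, while in the reducible case one plane must contain at least two of the $A_i$, and uniqueness fails exactly when it contains all three. You instead linearize the problem: in your coordinates the quadrics through $L$ and $L'$ form $\PP(\CC^{2\times 2})$, each condition $A_i\subset Q_M$ becomes the linear form $\langle M, c_i q_i^T\rangle=0$, and uniqueness reduces to linear independence of the three rank-one matrices, which you read off from the rulings of the Segre quadric. This trades the casework on the type of quadric for a clean algebraic criterion plus a final incidence lemma. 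One simplification of that last step: the same block structure shows directly that $A_i\cap A_j\neq\emptyset$ if and only if $q_i=q_j$ (the $4\times 4$ matrix with rows $c_i,q_i,c_j,q_j$ has determinant $\det(c_i,c_j)\cdot\det(q_i,q_j)$, and the first factor never vanishes), so your triangle/concurrency dichotomy can be replaced by a one-line computation.
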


\begin{proof} A quadric $Q$ containing $L$ and $L'$ must be either a union of two planes or smooth. If it is smooth, then the $A_i$ must be disjoint, and three disjoint lines uniquely determine a quadric in $\PP^3$. So assume $Q$ is the union of two planes. In this case, one of these planes $P$ is the join of two coplanar lines among the $A_i$, say $A_1$ and $A_2.$ If $A_3$ is not contained in~$P$, then the second plane in $Q$ is determined uniquely as the join of $A_3$ and $L'$. Finally, if~$A_3$ is contained in $P$, then there are infinitely many possibilities for the quadric by letting the second plane be any plane containing $L'$. 
\end{proof}


To determine whether the choice of quadric $Q_\sigma$ in~\Cref{thm: mainQuad} is unique, we may apply~\Cref{le:Uniqueness} with $(L,L') = (E_\sigma , E_\sigma^*),$ and $A_1, A_2, A_3$ of the form $F_\sigma (\ell_i)$.

\subsection{A Set-Theoretic Description for Line Multiview Varieties} 
The first step towards computing polynomial equations that cut out $\mathcal L_{\mathcal C}$ in the presence of at least~4 collinear cameras is to  rewrite $\mathcal L_{\mathcal C}$ as a particular intersection. For this, we need a new notation. Let $\sigma\subseteq [m]$ be a subset of indices and let $\mathcal{L}_{\mathcal{C}_\sigma}$ be the multiview variety of the arrangement $\mathcal{C}_\sigma=(C_i)_{i\in \sigma}.$ Then let $\pi_\sigma:(\PP^2)^m\to (\PP^2)^{\sigma}$ be 
the projection onto the factors indexed by $\sigma$. We write
$$\mathcal{L}_{\mathcal{C},\sigma}:=\pi_\sigma^{-1}(\mathcal{L}_{\mathcal{C}_\sigma}).$$

\begin{proposition}\label{prop: tuples} Let $\Sigma$ be the set of all 3-tuples of indices in $[m]$ and those 4-tuples that correspond to collinear \new{cameras}. Then, we have  
\begin{equation}\label{eq:LC-intersection}\LC=\bigcap_{\sigma\in \Sigma} \mathcal{L}_{\mathcal{C},\sigma}.\end{equation}
\end{proposition}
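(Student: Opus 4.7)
The plan is to verify the three conditions of \Cref{thm: mainQuad} for any $\ell$ in the claimed intersection. The inclusion $\LC\subseteq \bigcap_{\sigma\in \Sigma}\mathcal{L}_{\mathcal{C},\sigma}$ is immediate from the commutativity $\pi_\sigma\circ \Upsilon_\mathcal{C}=\Upsilon_{\mathcal{C}_\sigma}$ on the dense open set where both joint camera maps are defined, together with continuity of $\pi_\sigma$ and taking closures.

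For the reverse inclusion, fix $\ell\in\bigcap_{\sigma\in \Sigma}\mathcal{L}_{\mathcal{C},\sigma}$. Condition (1) of \Cref{thm: mainQuad} follows from the 3-tuples: for each $\{i,j,k\}\subseteq[m]$ the hypothesis ``no four cameras are collinear'' in \cite[Theorem 2.5]{line-multiview} is vacuous, so $\mathcal{L}_{\mathcal{C}_{\{i,j,k\}}}$ is cut out by the rank condition on the corresponding $4\times 3$ sub-matrix of $M(\ell)$. Thus every $3\times 3$ minor of $M(\ell)$ vanishes, giving $\mathrm{rank}\,M(\ell)\le 2$, which forces the back-projected planes $H_1,\ldots,H_m$ to share a common line.

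For conditions (2) and (3), fix a maximal collinear set $\sigma\subseteq[m]$ with $|\sigma|\ge 4$. If $|\sigma|=4$, then $\sigma\in \Sigma$ and the required quadric is produced directly by applying \Cref{thm: mainQuad} to the sub-arrangement $\mathcal{C}_\sigma$. If $|\sigma|\ge 5$, the 4-tuple conditions yield, for each 4-subset $\sigma'\subset \sigma$, a quadric $Q_{\sigma'}$ satisfying conditions (2)--(3) for $\mathcal{C}_{\sigma'}$ (noting $E_{\sigma'}=E_\sigma$, and choosing $E_{\sigma'}^*=E_\sigma^*$ so that $F_{\sigma'}(\ell_i)=F_\sigma(\ell_i)$). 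The main obstacle is gluing these different quadrics into a single $Q_\sigma$.

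The gluing splits into two subcases. If there exist three indices $i,j,k\in\sigma$ such that at least two of the line-valued $F_\sigma(\ell_i),F_\sigma(\ell_j),F_\sigma(\ell_k)$ are disjoint, then \Cref{le:Uniqueness} implies that the quadric through $E_\sigma$, $E_\sigma^*$, and these three lines is unique; hence all $Q_{\{i,j,k,l\}}$ with $l\in \sigma\setminus\{i,j,k\}$ must coincide, and this common quadric is the desired $Q_\sigma$. Otherwise every pair among the line-valued $F_\sigma(\ell_i)$ meets; the classical fact that a family of pairwise-incident lines in $\PP^3$ is either concurrent or coplanar, combined with the observation that the distinct centers $c_i\in E_\sigma$ force the common point (in the concurrent case) to lie off $E_\sigma$, places all such lines in a single plane $P\supseteq E_\sigma$. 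Then $Q_\sigma:=P\cup P'$, for any plane $P'\supseteq E_\sigma^*$, is a quadric satisfying conditions (2) and (3), completing the proof.
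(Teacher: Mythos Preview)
Your proof is correct and follows essentially the same strategy as the paper: verify the three conditions of \Cref{thm: mainQuad}, handle condition~(1) via the $3$-tuple rank constraints, and for conditions~(2)--(3) on a maximal collinear set split into a uniqueness case (invoking \Cref{le:Uniqueness}) and a degenerate case. The only substantive difference is your treatment of the degenerate case: the paper lists two residual subcases (all $F_\sigma(\ell_i)$ coplanar; exactly two of the $F_\sigma(\ell_i)$ are lines and these are skew), whereas you argue uniformly via the concurrent-or-coplanar lemma, observing that concurrency at a point off $E_\sigma$ already forces all line-valued $F_\sigma(\ell_i)$ into the plane $E_\sigma\vee p$. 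This is arguably cleaner, and in fact the paper's second residual subcase cannot occur once condition~(1) is in force: if two of the $F_\sigma(\ell_i)$ are planes, the corresponding back-projected planes are distinct and meet exactly in $E_\sigma^*$, forcing every $H_l$ to contain $E_\sigma^*$. One small point worth making explicit in your write-up is that this same observation guarantees at least $|\sigma|-1\ge 4$ line-valued $F_\sigma(\ell_i)$ (or none at all), so your dichotomy really is exhaustive and the pairwise-incidence argument has enough lines to run.
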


\begin{proof} The back-projected planes of $\ell$ meet in at least a line if and only if $M(\ell)$ has rank at most 2. Recall the rank condition ideal
\[
\IC = \left\langle 3\times3\text{-minors of } M(\ell)\right\rangle,
\]
where $M(\ell)=\begin{bmatrix}
    C_1^T \ell_1 & \cdots & C_m^T \ell_m
    \end{bmatrix}.$
Each of the given generators of this ideal depends on exactly three cameras. Thus, we are done if we can show that the conditions on $4$-tuples $\sigma$ in~\eqref{eq:LC-intersection} imply the existence of a quadric $Q_\sigma$ as in \Cref{thm: mainQuad} and vice versa. 

Fix a maximal set of indices of at least four collinear cameras $\Gamma$. The existence of a quadric $Q_\Gamma$ satisfying conditions 2-3 of \Cref{thm: mainQuad} directly implies the existence of $Q_{\sigma}$ satisfying conditions 2-3 for any subset $\sigma\subset \Gamma$, particularly those of cardinality 4. Towards the other direction, let 
$$\ell\in \bigcap_{\sigma\in \Sigma} \mathcal{L}_{\mathcal{C},\sigma},$$ 
and assume that three of $F_\Gamma(\ell_i),i\in \Gamma,$ are lines that are not coplanar. Let $\sigma'$ denote a set of three such indices. Consider $\sigma:=\sigma'\cup \{i\}\subseteq \Gamma$ for some $i\in \Gamma\setminus \sigma'$. There exist quadric surfaces $Q_{\sigma}$ as in \Cref{thm: mainQuad} by assumption. But three such lines determine uniquely a quadric $Q$ by \Cref{le:Uniqueness}, and therefore $Q_{\sigma}$ is independent of $i$, and we have $Q_\Gamma=Q_{\sigma}$. Two cases remain. Firstly, if all $F_\Gamma(\ell_i), i\in \Gamma$\new{,} lie \new{in} a common plane $P$, then the union of $P$ with any plane containing $E_\Gamma^*$ suffices for $Q_\sigma$. Secondly, if exactly two $F_\Gamma(\ell_i),i\in \Gamma,$ are lines and they are not coplanar, denote by $L_1, L_2$ these two lines, and let $P_1=E_\Gamma\vee L_1$ and $P_2=E_\Gamma^*\vee L_2$. We may then take $Q_\sigma$ to be $P_1 \cup P_2.$
\end{proof}

Proposition \ref{prop: tuples} implies that in order to obtain polynomial equations cutting out $\mathcal L_{\mathcal C}$ it is enough to obtain equations for $\mathcal{L}_{\mathcal{C},\sigma}$ for every subset $\sigma$ that consists of either 3 indices or 4 indices that correspond to 4 collinear cameras. If $\vert \sigma\vert =3$, we can use \cite[Theorem 2.5]{line-multiview} to deduce that  $\mathcal{L}_{\mathcal{C},\sigma}$ is cut out by those $3\times 3$ minors of the $4\times 3$ submatrix of $M(\ell)$ whose columns are indexed by $\sigma$. So, it remains to obtain equations for $\mathcal{L}_{\mathcal{C},\sigma}$ when $\sigma$ consists of~4 indices corresponding to 4 collinear cameras.

Without loss of generality, we may assume that $$\sigma=\{1,2,3,4\}.$$ We first give polynomial equations for when the four lines $F_\sigma(\ell_i)$ lie on a quadric $Q_\sigma$ as in \Cref{thm: mainQuad}. We need additional notation. 
Fix three distinct points $f_1,f_2,f_3$ on a chosen line $E_\sigma^*$ that is disjoint from $E_\sigma$, and write $f=(f_1,f_2,f_3).$ We define, for $i\in \sigma$,
\begin{align}\label{eq:hi-and-ei}
h_i &:= h_i (\ell) = C_i^T\ell_i, \nonumber \\
e_i(\ell_i) &:=c_i-(h_i^T f_2)f_1+(h_i^T f_1)f_2.
\end{align}
As long as $F_\sigma(\ell_i)$ is a line, then $e_i(\ell_i)$ is a point on $F_\sigma(\ell_i)$ which does not lie on $E_\sigma$ or $E_\sigma^*$. This is the main property of $e_i (\ell_i)$ that we will later use. 
Recalling the (affine) Veronese map 
$$\nu: \CC^4\to \CC^{10},\quad (x,y,z,w)^T\mapsto (x^2,y^2,z^2,w^2,xy,xz,xw,yz,yw,zw)^T,$$
we define a $10\times 10$ matrix $\Phi_{\mathcal C, f, \sigma}(\ell)\in\mathbb C^{10\times 10}$ by applying $\nu$ column-wise to the 10 points $c_1, c_2,c_3, f_1, f_2,f_3$ and \new{$e_i (\ell_i)$ as}
$$\Phi_{\mathcal C, f, \sigma}(\ell):= \nu \big(\ \begin{bmatrix}
c_{1} &
c_{2} &
c_3 &
f_1 & 
f_2 &
f_3 &
e_1(\ell_1) & 
e_2(\ell_2) &
e_3(\ell_3)&
e_4(\ell_4)\end{bmatrix}\ \big).$$

The next result shows that the line multiview variety for a set of four collinear cameras $\sigma $ is determined by rank conditions on $M(\ell )$ and this $10 \times 10$ matrix.

\begin{theorem} 
\label{thm: collinearextragenerators}
Let $\vert \sigma\vert =4$ such that the cameras with indices in $\sigma$ are collinear. 
As before, let $E_\sigma^*$ be any fixed line disjoint from $E_\sigma $,
and let $f=(f_1,f_2,f_3)$ be three distinct fixed points of~$E_\sigma^*$. Then,
$$\mathcal{L}_{\mathcal{C},\sigma} = \{\ell\in (\mathbb P^2)^m \mid \operatorname{rank} M(\ell) \leq 2 \text{ and } \det \Phi_{\mathcal C, f, \sigma}(\ell) = 0\}.$$
\end{theorem}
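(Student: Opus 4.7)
The plan is to reduce the statement to Theorem~\ref{thm: mainQuad} applied to the subarrangement $\mathcal{C}_\sigma$, and then match each of the two ideal-theoretic conditions on the right-hand side to one of the geometric conditions of that theorem. Since $\mathcal{L}_{\mathcal{C},\sigma}=\pi_\sigma^{-1}(\mathcal{L}_{\mathcal{C}_\sigma})$, a point $\ell$ lies in $\mathcal{L}_{\mathcal{C},\sigma}$ if and only if $(\ell_i)_{i\in\sigma}$ satisfies the three conditions of Theorem~\ref{thm: mainQuad} for $\mathcal{C}_\sigma$. Condition~(1), that the planes $H_i$, $i\in\sigma$, meet in at least a line, is precisely the rank-$\leq 2$ condition on the $\sigma$-indexed $4\times 4$ submatrix of $M(\ell)$. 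What remains is to show that $\det \Phi_{\mathcal C, f, \sigma}(\ell) = 0$ is equivalent to the existence of a quadric $Q_\sigma$ as required by conditions~(2)--(3).

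To interpret the determinant, I would use that a quadric in $\PP^3$ is determined by its coefficient vector $\mathbf a\in\CC^{10}$, and passes through a point $p\in\PP^3$ if and only if $\mathbf a \cdot \nu(p)=0$. Hence $\det \Phi_{\mathcal C, f, \sigma}(\ell)=0$ if and only if there exists a nonzero quadric $Q$ through all ten points $c_1,c_2,c_3,f_1,f_2,f_3,e_1(\ell_1),\ldots,e_4(\ell_4)$.

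The main geometric ingredient is the B\'ezout fact that a quadric meeting a line in three distinct points must contain that line. Applied to $c_1,c_2,c_3\in E_\sigma$ and $f_1,f_2,f_3\in E_\sigma^*$, this forces any such $Q$ to contain both $E_\sigma$ and $E_\sigma^*$, and hence also $c_4\in E_\sigma$. For each $i\in\sigma$ I would verify directly from \eqref{eq:hi-and-ei} that $h_i^T e_i(\ell_i)=0$ (so $e_i(\ell_i)\in H_i$) and that $e_i(\ell_i)-c_i\in E_\sigma^*$, which places $e_i(\ell_i)$ on $F_\sigma(\ell_i)$ whenever the latter is a line. A short check then shows $e_i(\ell_i)$ is distinct from both $c_i$ and from $H_i\cap E_\sigma^*$ in that case, so the B\'ezout fact again forces $F_\sigma(\ell_i)\subseteq Q$. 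This establishes conditions~(2)--(3). Conversely, the quadric $Q_\sigma$ produced by Theorem~\ref{thm: mainQuad} already contains $E_\sigma$, $E_\sigma^*$, and every $F_\sigma(\ell_i)$ that is a line, and hence all ten Veronese-imaged points, so $\det\Phi_{\mathcal C, f, \sigma}(\ell) = 0$.

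The main obstacle is the degenerate case in which $F_\sigma(\ell_i)$ is a plane rather than a line, i.e.~$E_\sigma^*\subseteq H_i$. In that case $h_i^T f_1 = h_i^T f_2 = 0$, so $e_i(\ell_i)=c_i$, which already lies on $E_\sigma\subseteq Q_\sigma$; moreover Theorem~\ref{thm: mainQuad} imposes no condition on $F_\sigma(\ell_i)$ in this case, so the argument above goes through unchanged. A secondary subtlety is to interpret the rank condition on $M(\ell)$ in the statement as applying to the $\sigma$-indexed submatrix, since $\mathcal{L}_{\mathcal{C},\sigma}$ depends only on the coordinates $\ell_i$ with $i\in\sigma$.
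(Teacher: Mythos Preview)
Your proposal is correct and follows essentially the same line of argument as the paper: interpret the columns of $\Phi_{\mathcal C,f,\sigma}(\ell)$ as Veronese images of points, so that the determinant vanishes iff some nonzero quadric passes through all ten points; then use the three-points-on-a-line criterion to translate this into containment of $E_\sigma$, $E_\sigma^*$, and the $F_\sigma(\ell_i)$, matching conditions (2)--(3) of Theorem~\ref{thm: mainQuad}, with the degenerate case $e_i(\ell_i)=c_i$ handled exactly as you do.

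Your final remark about the rank condition is a genuine clarification that the paper glosses over: as written, $\operatorname{rank} M(\ell)\le 2$ for the full $4\times m$ matrix is strictly stronger than what $\mathcal{L}_{\mathcal{C},\sigma}$ requires, so equality in the displayed statement only holds if $M(\ell)$ is read as the $\sigma$-indexed $4\times 4$ submatrix (which is also how the paper implicitly uses the result in Proposition~\ref{prop: tuples} and Corollary~\ref{cor: set-theoretic-collinear-ideal}).
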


\begin{proof} A quadratic form defining $Q_\sigma$ may be written as
\begin{equation}\label{def_q}
q(x,y,z,w)=\theta^T\,\nu(x,y,z,w), 
\end{equation}
for some nonzero vector $\theta \in \CC^{10}$. We recall once again that if three distinct points of a line lie on a quadric, then the whole line lies on that quadric. Therefore, the conditions
\begin{equation}\label{eq:Psi-columns-c-f}q(c_i)=\theta^T\,\nu(c_i)=0 \quad \text{and} \quad \quad q(f_i)=\theta^T\nu(f_i)=0,\ i=1,2,3\end{equation}
hold if and only if $E_\sigma,E_\sigma^*\subseteq Q_\sigma$. This explains the first 6 columns of the matrix $\Phi_{\mathcal C, f, \sigma}(\ell)$\new{, as we aim to apply \Cref{thm: mainQuad}}. 

Next, we observe that any point of $c_i\vee E_\sigma^*$ is of the form $\alpha_i c_i + \beta_i f_1+ \gamma_i f_2$, where $(\alpha_i:\beta_i:\gamma_i)\in \PP^2$. The points $x$ that lie on $F_\sigma(\ell_i)$ are those of $c_i\vee E_\sigma^*$ such that $h_i^Tx=0$ (this follows directly from the definition of $F_\sigma(\ell_i)$ in (\ref{def_F_sigma})). 
We have
$$h_i^T\,(\alpha_i c_i + \beta_i f_1+ \gamma_i f_2)=h_i^T\,( \beta_i f_1+ \gamma_i f_2)=0,$$
which leaves two alternatives: Either $h_i^Tf_1=h_i^Tf_2=0$ or 
$$(\beta_i:\gamma_i)=\new{(-h_i^T f_2: h_i^T f_1)}. $$
In the first case, we have \new{that} $F_\sigma(\ell_i)=c_i\vee E_\sigma^*$ is a plane. 
Otherwise\new{,} $F_\sigma(\ell_i)$ is a line and lies in $Q_\sigma$ if and only if three distinct point of $F_\sigma(\ell_i)$ lie in $Q_\sigma$. Consider the three points $c_i, e_i(\ell_i)$ and $a_i:=E_\sigma^*\cap F_\sigma(\ell_i)$. Under the assumption that $q_\sigma(c_i)=0, q_\sigma(f_i)=0$ for $i=1,2,3$ we have seen above that $E_\sigma,E_\sigma^*\subseteq Q_\sigma$. It follows that $c_i,a_\sigma\in Q_\sigma$. Then $F_\sigma(\ell_i)\subseteq Q_\sigma$ if and only if 
\begin{equation}\label{eq:Psi-columns-ei}q_\sigma(e_i(\ell_i)) = \theta^T\,\nu(e_i(\ell_i)) =0.\end{equation}
This gives the last 4 columns of the matrix $\Phi_{\mathcal C, f, \sigma}(\ell)$.  

In summary, the quadric $Q_\sigma$ defined by~\eqref{def_q} satisfies the conditions of~\Cref{thm: mainQuad} if and only if $\theta $ satisfies equations~\eqref{eq:Psi-columns-c-f} and~\eqref{eq:Psi-columns-ei}.
In other words,
$$\theta^T\Phi_{\mathcal C, f, \sigma}(\ell)=0,$$
which in turn is equivalent to $\det \Phi_{\mathcal C, f, \sigma}(\ell) = 0$.
\end{proof}

\begin{example}
\label{example: computephi} 
Let $v_1,v_2,v_3,v_4$ be distinct complex numbers. Consider the four collinear cameras of the form 
\begin{align*}
    C_i = \begin{bmatrix}
1 & 0 & 0 & v_i \\
0 & 1 & 0 & 0 \\
0 & 0 & 1 & 0 
\end{bmatrix},
\end{align*}
for $i\in \sigma=\{1,2,3,4\}$. The centers are $c_i=[v_i:0:0:-1]$. Notice that we may substitute the three centers $c_1, c_2, c_3$ with $[1:0:0:0], [0:0:0:1]$ and $[1:0:0:1]$ in the computation of $\Phi_{\mathcal C, f, \sigma}(\ell)$\new{, since the corresponding columns are only there to ensure that} the associated quadric $Q_\sigma$ contains the baseline. \new{Choose the line $E_\sigma^*$ to be} spanned by $f_1 = [0:1:0:0]$ and $f_2 = [0:0:1:0]$. Letting $f_3=[0:1:1:0]$, we can then write explicitly

    \begin{align*}
        \Phi_{\mathcal C, f, \sigma}(\ell) = \begin{bmatrix}
        1& 0& 1& 0& 0& 0& v_1^2& v_2^2& v_3^2& v_4^2\\ 
        0& 0& 0& 0& 0& 0& -l_{3,1}v_1& -l_{3,2}v_2& -l_{3,3}v_3& -l_{3,4}v_4\\ 
        0& 0& 0& 0& 0& 0& l_{2,1}v_1& l_{2,2}v_2& l_{2,3}v_3 & l_{2,4}v_4\\ 
      0& 0& 1& 0& 0& 0& -v_1& -v_2& -v_3& -v_4\\ 
      0& 0& 0& 1& 0& 1& l_{3,1}^2& l_{3,2}^2& l_{3,3}^2& l_{3,4}^2\\ 
      0& 0& 0& 0& 0& 1& -l_{2,1}l_{3,1}& -l_{2,2}l_{3,2}& -l_{2,3}l_{3,3}& -l_{2,4}l_{3,4}\\ 
      0& 0& 0& 0& 0& 0& l_{3,1}& l_{3,2}& l_{3,3}& l_{3,4}\\ 
      0& 0& 0& 0& 1& 1& l_{2,1}^2& l_{2,2}^2& l_{2,3}^2& l_{2,4}^2\\ 
      0& 0& 0& 0& 0& 0& -l_{2,1}& -l_{2,2}& -l_{2,3} & -l_{2,4}\\ 
      0& 1& 1& 0& 0& 0& 1& 1& 1& 1
      \end{bmatrix}.
    \end{align*}

The determinant of this matrix can be rewritten as
    \begin{align*}
         \det  
        \begin{bmatrix}
l_{3,1}v_1& l_{3,2}v_2& l_{3,3}v_3& l_{3,4}v_4\\  
l_{2,1}v_1& l_{2,2}v_2& l_{2,3}v_3& l_{2,4}v_4\\  
l_{3,1}& l_{3,2}& l_{3,3}& l_{3,4}\\ 
l_{2,1}& l_{2,2}& l_{2,3}& l_{2,4}
      \end{bmatrix}.
    \end{align*}

    \hfill$\diamond$
\end{example}
\medskip

As an extension of $I_\mathcal{C}$ as defined in the introduction, we define
    \begin{align}\label{eq: Itilde}
    \widetilde{I}_{\mathcal C} := \langle 3\times 3 \textnormal{ minors of } M(\ell)\rangle + \sum_{\sigma \in \mathcal{J}} \langle\ \det \Phi_{\mathcal C, f, \sigma}(\ell)\ \rangle,
\end{align}
\new{where $\mathcal{J}$ is} the collection of index sets $\sigma$ of four collinear cameras and with $f$, depending on $\sigma$, being three distinct points of the line $E_\sigma^*$. \new{Using \Cref{prop: tuples} and \Cref{thm: collinearextragenerators}, we establish the following result.}
\begin{corollary} \label{cor: set-theoretic-collinear-ideal} Set-theoretically, the line multiview variety is cut out by the ideal $\widetilde{I}_\mathcal{C}$.
\end{corollary}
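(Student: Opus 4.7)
The plan is to combine \Cref{prop: tuples}, which expresses
\[
\LC = \bigcap_{\sigma \in \Sigma} \mathcal{L}_{\mathcal{C},\sigma},
\]
with the set-theoretic descriptions of $\mathcal{L}_{\mathcal{C},\sigma}$ for the two types of indexing sets appearing in $\Sigma$: the 3-element subsets of $[m]$ and the 4-element subsets indexing four collinear cameras. It then suffices to observe that the union, over all $\sigma \in \Sigma$, of the set-theoretic equations defining $\mathcal{L}_{\mathcal{C},\sigma}$ is precisely the generating set of $\widetilde{I}_{\mathcal{C}}$ given in~\eqref{eq: Itilde}.

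For each 3-element $\sigma \subseteq [m]$, the cited \cite[Theorem 2.5]{line-multiview} (as noted by the authors immediately before \Cref{thm: collinearextragenerators}) gives that $\mathcal{L}_{\mathcal{C},\sigma}$ is set-theoretically cut out by the $3\times 3$ minors of the $4\times 3$ submatrix of $M(\ell)$ whose columns are indexed by $\sigma$. Running over all 3-element $\sigma \in \Sigma$ yields exactly the full set of $3\times 3$ minors of $M(\ell)$, all of which are generators of $\widetilde{I}_{\mathcal{C}}$ by construction. For each 4-element $\sigma \in \Sigma$ corresponding to four collinear cameras, \Cref{thm: collinearextragenerators} shows that $\mathcal{L}_{\mathcal{C},\sigma}$ is set-theoretically cut out by the $3\times 3$ minors of $M(\ell)$ together with the single polynomial $\det \Phi_{\mathcal C, f, \sigma}(\ell)$; both types of equations are again generators of $\widetilde{I}_{\mathcal{C}}$.

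Assembling these facts, the variety of $\widetilde{I}_{\mathcal{C}}$ in $(\PP^2)^m$ equals the intersection $\bigcap_{\sigma \in \Sigma} \mathcal{L}_{\mathcal{C},\sigma}$, which is $\LC$ by \Cref{prop: tuples}. I do not anticipate any substantive obstacle: the real work has been carried out in \Cref{prop: tuples} and \Cref{thm: collinearextragenerators}, and the corollary is essentially an assembly argument. The only minor point to verify is that the $3\times 3$ minors required for the rank condition on the $4\times 4$ submatrix in the collinear case are automatically contained in the full set of $3\times 3$ minors of $M(\ell)$, so no cross-compatibility between the different $\sigma$'s needs to be checked.
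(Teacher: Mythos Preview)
Your proposal is correct and follows exactly the approach the paper takes: the corollary is stated immediately after \eqref{eq: Itilde} as a direct consequence of \Cref{prop: tuples} and \Cref{thm: collinearextragenerators}, and you have simply spelled out the assembly argument in more detail than the paper does. Nothing is missing.
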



In \cite{line-multiview}, after the statement of Theorem 2.6, which this section is based on, an example is given of the set-theoretic constraints for a set of four collinear cameras that have been found through elimination. Here we expand on this example by adding an additional camera matrix.  
\begin{example} Consider the collinear cameras
$$C_1=\left[\begin{smallmatrix}1 & 0 & 0 & 0\\0 & 1 & 0 & 0\\0 & 0 & 1 & 0\end{smallmatrix}\right],~
C_2=\left[\begin{smallmatrix}0 & 1 & 0 & 0\\0 & 0 & 1 & 0\\0 & 0 & 0 & 1\end{smallmatrix}\right],~
C_3=\left[\begin{smallmatrix}1 & 0 & 0 & -1\\0 & 1 & 0 & 0\\0 & 0 & 1 & 0\end{smallmatrix}\right],~
C_4=\left[\begin{smallmatrix}1&  0 & 0 & 1\\0 & 1 & 0 & 0\\0 & 0 & 1 & 0\end{smallmatrix}\right],~C_5=\left[\begin{smallmatrix}2 & 0 & 0 & 1\\0 & 1 & 0 & 0\\0 & 0 & 1 & 0\end{smallmatrix}\right]. $$
The centers $c_i$ of $C_i$ lie on the baseline $E_\tau$, $\tau=\{1,2,3,4,5\}$, spanned by $c_1=[0:0:0:1]$ and $c_2=[1:0:0:0]$. To make the equations below easier to read, we write $x=\ell_1,y=\ell_2,z=\ell_3,w=\ell_4$ and $\ell_5=u$. There are ${5\choose 4}=5$ subsets $\sigma\subset \tau$ with 4 elements giving us 5 constraints beyond the rank condition of $M(\ell)$. These 5 constraints are as follows:
\begin{align*}
    0&=2x_3y_2z_2w_2-x_3y_1z_3w_2-x_2y_2z_3w_2 -x_3y_1z_2w_3-x_2y_2z_2w_3+2x_2y_1z_3w_3,\\
    0&=3x_3y_2z_2u_2+2x_3y_1z_3u_2+x_2y_2z_3u_2 +x_3y_1z_2u_3+2x_2y_2z_2u_3-3x_2y_1z_3w_3,\\
    0&=-x_3y_2w_2u_2+2x_3y_1w_3u_2-x_2y_2w_3u_2 -x_3y_1w_2u_3+2x_2y_2w_2u_3-x_2y_1w_3u_3,\\
    0&=-x_3z_3w_2u_2+3x_3z_2w_3u_2-2x_2z_3w_3u_2 -2x_3z_2w_2u_3+3x_2z_3w_2u_3-x_2z_2w_3u_3,\\
    0&=y_2z_3w_2u_2+3y_2z_2w_3u_2-4y_1z_3w_3u_2 -4y_2z_2w_2u_3+3y_1z_3w_2u_3+y_1z_2w_3u_3.
\end{align*}
The ideal $I_\mathcal{C}$, in this case, is not prime. Computing a primary decomposition of this ideal in \texttt{Macaulay2}~\cite{M2}, there is an associated prime generated by the five polynomials above and the five additional polynomials $\det \Phi_{\mathcal C, f, \sigma}(\ell)$.
This is the vanishing ideal of $\LC .$
\hfill$\diamond$
\end{example}

\subsection{\new{Saturation} with Respect to the Irrelevant Ideal}
Analogous to \Cref{sec:generators}, we define the \textit{cone} over the line multiview variety for an arbitrary camera arrangement as the zero set of $\widetilde{I}_{\mathcal C}$:
\begin{align*}
\widetilde{\mathcal L_{\mathcal{C}}}\ :=&\ \{\ell \in (\mathbb C^3)^m \mid f(\ell)=0\text{ for all } f\in \widetilde{I}_{\mathcal C}\}\\
=&\  \widehat{\mathcal L_{\mathcal{C}}} \cap\Big(\bigcap_{\sigma}\ \{\ell \in (\mathbb C^3)^m \mid \det \Phi_{\mathcal C, f, \sigma}(\ell) = 0\}\Big)\new{,}
\end{align*}
where in the second line\new{,} $\widehat{\mathcal L_{\mathcal{C}}}$ is as in (\ref{def_cone_L_C}) and $\sigma$ runs over all sets of indices corresponding to four collinear cameras. 
The main result of this subsection is \new{that if} $\widetilde{I}_\mathcal{C}$ is radical, then it is also saturated with respect to the irrelevant ideal $\bigcup_{i=1}^m V (\ell_i)$.

\begin{proposition}\label{lem: collinear-saturated} Consider
$$X_{\mathcal{C}}:=\{\ell =(\ell_1,\ldots,\ell_m)\in \widetilde{\mathcal L_{\mathcal C}}\mid \ell_i\neq 0 \text{ for } 1\leq i\leq m\}.$$
The following hold:
\begin{enumerate}
    \item $X_{\mathcal{C}}$ is a Zariski dense subset of $\widetilde{\mathcal L_{\mathcal C}}$, meaning $\overline{X_{\mathcal{C}}}=\widetilde{\mathcal L_{\mathcal C}}$.
    \item If $\widetilde{I}_\mathcal{C}$ is radical, then it is also saturated with respect to the irrelevant ideal $\bigcup_{i=1}^m V (\ell_i)$.
\end{enumerate} 
In particular, if no four cameras are collinear we have $\widehat{\mathcal L_{\mathcal C}}=\widetilde{\mathcal L_{\mathcal C}}$ and $I_{\mathcal C}=\widetilde{I}_{\mathcal C}$, so that the results of this proposition hold verbatim for $\widehat{\mathcal L_{\mathcal C}}$ and $I_{\mathcal C}$ in this case.
\end{proposition}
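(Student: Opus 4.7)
The plan is to imitate the proof of~\Cref{le: XC}, incorporating two observations about the new generators $\det \Phi_{\mathcal C, f, \sigma}$ of $\widetilde{I}_{\mathcal C}$.

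First, I would show that setting any $\ell_i = 0$ with $i \in \sigma$ forces $\det \Phi_{\mathcal C, f, \sigma}(\ell) = 0$. Under this substitution the corresponding column of $\Phi$ becomes $\nu(c_i)$: if $i$ is one of the ``first three'' indices $i_1, i_2, i_3$ of $\sigma$, this column duplicates one of the first three columns, and otherwise ($i = i_4$) the vector $\nu(c_{i_4})$ lies in the span of $\nu(c_{i_1}), \nu(c_{i_2}), \nu(c_{i_3})$, because the Veronese image of a line in $\PP^3$ is a smooth conic spanning a $\PP^2 \subset \PP^9$. Together with the usual multi-homogeneity of the $3\times 3$ minors, this shows $\pi_J(\ell) \in \widetilde{\mathcal L_{\mathcal C_J}}$ for $J := \{i : \ell_i \ne 0\}$, and \Cref{cor: set-theoretic-collinear-ideal} places $[\pi_J(\ell)] \in \mathcal L_{\mathcal C_J}$.

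Second, I would show that $X_{\mathcal C}$ is $(\CC^*)^m$-invariant. For any nonzero left-annihilator $\theta$ of $\Phi_{\mathcal C, f, \sigma}(\ell)$, the conditions $\theta^T \nu(f_k) = 0$ force $E_\sigma^* \subseteq Q_\theta := \{\theta^T\nu = 0\}$. Since $e_i(\ell_i) - c_i = -(h_i^T f_2) f_1 + (h_i^T f_1) f_2$ lies in $E_\sigma^*$, the parametric point $e_i(t\ell_i) = c_i + t(e_i(\ell_i)-c_i)$ lies on $F_\sigma(\ell_i) \cap Q_\theta$ for every $t$, so $\theta^T \nu(e_i(t\ell_i)) = 0$. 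Hence the same $\theta$ annihilates $\Phi_{\mathcal C, f, \sigma}(t \ell)$ for every $t \in (\CC^*)^m$, and $\det \Phi_{\mathcal C, f, \sigma}(t \ell) = 0$ follows. Combined with the invariance of the rank condition on $M(\ell)$ under column scaling, this yields the desired $(\CC^*)^m$-stability.

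With these two tools, part~1 follows the blueprint of~\Cref{le: XC}: given $\ell \in \widetilde{\mathcal L_{\mathcal C}} \setminus X_{\mathcal C}$, use the first observation and \cite[Proposition 2.4 2.]{line-multiview} (with direct constructions in the small cases $|J| \le 1$, as in the proof of~\Cref{le: XC}) to produce $\ell' \in X_{\mathcal C}$ with $\pi_J(\ell') = \pi_J(\ell)$; the sequence $(\ell_\epsilon)_i = \ell_i$ for $i \in J$ and $(\ell_\epsilon)_i = \epsilon \ell_i'$ otherwise is then a $(\CC^*)^m$-rescaling of $\ell'$ for every $\epsilon \ne 0$, so it lies in $X_{\mathcal C}$ by the second observation, and $\ell_\epsilon \to \ell$ as $\epsilon \to 0$. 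Part~2 is then immediate: for radical $\widetilde{I}_{\mathcal C}$, the Nullstellensatz yields $\widetilde{I}_{\mathcal C} : \mathfrak m^\infty = I(\overline{X_{\mathcal C}}) = I(\widetilde{\mathcal L_{\mathcal C}}) = \widetilde{I}_{\mathcal C}$, where $\mathfrak m$ denotes the irrelevant ideal and the middle equality uses part~1. The ``in particular'' remark follows because $\widetilde{I}_{\mathcal C} = I_{\mathcal C}$ is radical by~\Cref{reduced_ring} in the no-four-collinear case. The main obstacle in this plan is establishing the $(\CC^*)^m$-invariance, as it relies on the non-obvious geometric fact that the line $E_\sigma^*$ is automatically contained in any quadric witnessing the vanishing of $\det \Phi_{\mathcal C,f,\sigma}$.
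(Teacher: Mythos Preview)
Your approach is correct and mirrors the paper's own proof, which simply refers back to the argument of~\Cref{le: XC} together with the remark that the generators of $\widetilde{I}_{\mathcal C}$ not involving $\ell_i$ already generate $\widetilde{I}_{\mathcal C'}$. You supply justification for two facts the paper leaves implicit---the vanishing of $\det\Phi_{\mathcal C,f,\sigma}$ at $\ell_i=0$ and the $(\CC^*)^m$-invariance of its zero locus; a shorter route to both is to observe that $\det\Phi_{\mathcal C,f,\sigma}$ is in fact multilinear in the $\ell_i$ for $i\in\sigma$, since expanding $\nu(e_i(\ell_i))=\nu(c_i)+B(c_i,w_i)+\nu(w_i)$ (with $w_i:=e_i(\ell_i)-c_i$ linear in $\ell_i$ and $B$ the polarization of $\nu$) and subtracting the parts $\nu(c_i)$ and $\nu(w_i)$, which lie in the span of the first six columns, leaves only the term $B(c_i,w_i)$ linear in $\ell_i$ (cf.\ Example~\ref{example: computephi}).
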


\begin{proof} \new{The first part is shown analogously to the first part of \Cref{le: XC}. Note that also in the setting of this result, the generators of $\widetilde{I}_{\mathcal C}$ that do not involve $\ell_i$, generate the ideal $\widetilde{I}_{\mathcal{C}'}$, where $\mathcal{C}'$ is the arrangement we get by removing $C_i$ from $\mathcal C$.}

We next deal with \new{saturation, analogously to the proof of \Cref{thm:main_ideal}}. Assuming \new{that $\widetilde{I}_\mathcal{C}$ is radical,} we have $I(\widetilde{\mathcal L_{\mathcal C}})=\widetilde{I}_\mathcal{C}$. We conclude by noting that from the first part of the proof\new{,}
\phantom\qedhere
\[\pushQED{\qed}I(\widetilde{\mathcal L_{\mathcal C}}) =  I ( X_\mathcal{C}) = I (\widetilde{\mathcal L_{\mathcal C}}\setminus \cup_{i=1}^m V (\ell_i)) = I(\widetilde{\mathcal L_{\mathcal C}}) : \big(\underbrace{I(\cup_{i=1}^m V (\ell_i))}_\text{irrelevant}\big)^\infty .\qedhere
\popQED\]
\end{proof}

\subsection{Applying the Group Action} 
In Section \ref{sec:group}\new{,} we have discussed how $G=\mathrm{PGL}_4 \times \mathrm{PGL}_3^m $ acts on the ideal $I_{\mathcal C}$ generated by the $3\times 3$-minors of $M(\ell)$. \new{Here}, we study the action on the additional constraints $\det \Phi_{\mathcal C,\sigma,f}(\ell) = 0$ from the previous section.

For $h = (H, H_1, \ldots, H_m) \in G$\new{,} we extend the group action (\ref{def_group_action}) by setting 
$$h \cdot f = (H f_1,\ Hf_2,\ Hf_3),$$
\new{where $f=(f_1,f_2,f_3)$ is} a triple of distinct points on $E_\sigma^*$, $\sigma\subset [m]$. Recall that the action $L_h$ sends $\ell_i\in \PP^2$ to $H_i^{-T}\ell_i$.

\begin{lemma} 
\label{lem: action-det-phi}  Fix $h=(H, H_1, \ldots , H_m) \in \GL_4 \times \GL_3^m$ representing any element of $ G$, and let $\sigma$ be indices of four collinear cameras.  Then
    $$
    \det\Phi_{h\cdot \mathcal C, h\cdot f, \sigma}(L_h(\ell)) = \det (H)^2 \cdot \det \Phi_{\mathcal C, f, \sigma}(\ell).
    $$
In particular, the vanishing of $\det \Phi_{\mathcal C, f, \sigma}(\ell)$ is unaffected by coordinate changes.
\end{lemma}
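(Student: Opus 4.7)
The plan is to prove the identity by showing that the matrix $\Phi$ itself transforms by left multiplication with the second symmetric power of $H$; that is,
\[
\Phi_{h\cdot \mathcal C, h\cdot f, \sigma}(L_h(\ell)) \;=\; \mathrm{Sym}^2(H) \cdot \Phi_{\mathcal C, f, \sigma}(\ell),
\]
where $\mathrm{Sym}^2(H) \in \mathrm{GL}_{10}$ is the unique $10\times 10$ matrix satisfying $\nu(Hv) = \mathrm{Sym}^2(H)\,\nu(v)$ for all $v\in \mathbb{C}^4$. Once this is established, the conclusion follows by taking determinants, using the fact that $\det(\mathrm{Sym}^2(H))$ is a nonzero (monomial) power of $\det(H)$, which in particular implies the vanishing of $\det \Phi_{\mathcal C, f, \sigma}(\ell)$ is $G$-invariant.

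The heart of the argument is a column-by-column verification that the ten points defining the new $\Phi$ are exactly $H$ applied to the ten points defining the old $\Phi$. For the three center columns this is immediate: the center of $H_iC_iH^{-1}$ is $\ker(H_iC_iH^{-1}) = H\cdot \ker C_i = H c_i$. For the three columns coming from $f=(f_1,f_2,f_3)$, the definition $h\cdot f = (Hf_1,Hf_2,Hf_3)$ gives this directly. The only nontrivial piece is the four columns $e_i(\ell_i)$ for $i\in \sigma$. Here I would first compute that the back-projected plane transforms as
\[
h_i \;=\; C_i^T\ell_i \;\longmapsto\; (H_i C_i H^{-1})^T(H_i^{-T}\ell_i) \;=\; H^{-T}h_i,
\]
from which the crucial cancellation
\[
(H^{-T}h_i)^T(Hf_j) \;=\; h_i^T H^{-1}Hf_j \;=\; h_i^T f_j
\]
follows. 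Substituting into the defining formula~\eqref{eq:hi-and-ei} for $e_i$ in the transformed setting produces
\[
e_i\bigl((L_h(\ell))_i\bigr)_{h\cdot \mathcal C,\, h\cdot f} \;=\; H c_i - (h_i^T f_2)(H f_1) + (h_i^T f_1)(Hf_2) \;=\; H\,e_i(\ell_i),
\]
which is exactly what we need.

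Combining the ten column identities with the defining property $\nu(Hv) = \mathrm{Sym}^2(H)\nu(v)$ yields the desired factorization of $\Phi$. Taking determinants gives $\det \Phi_{h\cdot \mathcal C, h\cdot f, \sigma}(L_h(\ell)) = \det(\mathrm{Sym}^2(H))\cdot \det \Phi_{\mathcal C, f, \sigma}(\ell)$, and the "in particular" clause follows immediately because $\det(\mathrm{Sym}^2(H)) \neq 0$ whenever $H \in \mathrm{GL}_4$. I don't expect a serious obstacle here: the only place any calculation is required is in tracking how $h_i$ transforms under the action, and this is a clean one-line computation using the defining formula for the $G$-action on camera arrangements. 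The rest is bookkeeping tied to the invariance property of the Veronese embedding.
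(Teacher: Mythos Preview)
Your proof is correct and follows essentially the same approach as the paper: you factor $\Phi$ through the second symmetric power of $H$ (the paper writes this as $H\otimes H$ acting on $\mathrm{Sym}^2(\mathbb{C}^4)$ via $A\mapsto HAH^T$), and the key computation---that the scalars $h_i^Tf_j$ are unchanged under the action because $h_i\mapsto H^{-T}h_i$ while $f_j\mapsto Hf_j$---is exactly the simplification the paper invokes for the last four columns. One small remark: like the paper's own argument, you stop at $\det(\mathrm{Sym}^2(H))\cdot\det\Phi$ without actually computing this determinant to be $\det(H)^2$ (in fact $\det(\mathrm{Sym}^2(H))=\det(H)^5$ for $H\in\GL_4$), but since only the ``in particular'' clause is ever used, this does not affect anything downstream.
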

    
\begin{proof} First we make the natural identification between $\CC^{10}$ and the set of symmetric $4\times 4$ matrices $\mathrm{Sym}^2(\mathbb{C}^4)$,
$$(a_1,\ldots,a_{10})\cong \left[\begin{smallmatrix}a_1 & a_5 & a_6 & a_7\\
a_5 & a_2 & a_8 & a_9\\
a_6 & a_8 & a_3 & a_{10}\\
a_7 & a_9 & a_{10} & a_4\end{smallmatrix}\right].$$
Then we can identify the Veronese embedding with the map
\begin{align*}
     \nu: \mathbb{C}^4 &\to \mathrm{Sym}^2(\mathbb{C}^4), \\
     p &\mapsto p \otimes p=pp^T.
\end{align*}
For $H \in \mathrm{GL}_4$, define the linear map
\begin{align*}
        (H \otimes H): \mathrm{Sym}^2(\mathbb{C}^4)& \to \mathrm{Sym}^2(\mathbb{C}^4) \\
        A &\mapsto HAH^T.
    \end{align*} 
    It is easy to check that this map is bijective. We may view $(H\otimes H)$ as an invertible linear map $\CC^{10}\to\CC^{10}$, meaning an invertible $10\times 10$ matrix, via the identification above. For any vector $p\in \CC^4$, we can then write in $\CC^{10}$ that $\nu(Hp)=(H\otimes H)\nu (p)$.

Observe that the last four columns of $\Phi_{h\cdot \mathcal C, h\cdot f, \sigma}(L_h(\ell))$ are, after  simplification, 
\begin{align*}
    \nu(Hc_i - (\ell_i^T C_if_2) \cdot Hf_1+(\ell_i^T C_if_1)Hf_2), \quad s=1, \ldots , 4.
    \end{align*}
The inputs of these expressions can be written $He_s(\ell_s)$, where $e_s(\ell_s)$ defined with respect to $ (\mathcal{C}, f, \ell)$. It follows that every column of $\Phi_{h\cdot \mathcal C, h\cdot f, \sigma}(L_h(\ell))$ corresponds to the same column of $\Phi_{\mathcal C, f, \sigma}(\ell)$, except multiplied by the invertible matrix $(H\otimes H)$ from the left. In other words, \phantom\qedhere
\[\pushQED{\qed}\det \Phi_{h\cdot \mathcal C, h\cdot f, \sigma}(L_h(\ell))= \det (H\otimes H) \cdot \det\Phi_{\mathcal C, f, \sigma}(\ell).\qedhere
\popQED\]
\end{proof}



\medskip
\section{Gröbner Bases for Collinear Cameras}\label{s: GrobColl}

This section is aimed at studying the ideal $\widetilde{I}_{\mathcal C}$ introduced in Corollary \ref{cor: set-theoretic-collinear-ideal} that cuts out the multiview variety set-theoretically when all centers are collinear. We do this by providing a Gröbner basis for $\widetilde{I}_{\mathcal C} $ in some special cases and by verifying through the \emph{recognition criterion}~\cite[Proposition 2.3]{atlas} that $\widetilde{I}_{\mathcal C}$ is also the vanishing ideal of the multiview variety. We closely follow the ideas in \Cref{subsec:it-ideal}, which can be summarized in the following steps: 
(1) Define an ideal that describes an arrangement of collinear cameras with indeterminate translations, and compute an explicit Gr\"{o}bner basis for this ideal.
(2) Show that this ideal specializes to the line multiview ideal for ``sufficiently generic'' translational cameras on a fixed line. 
(3) Use the action of the group $\PGL_4 \times \PGL_3^m$ and Proposition \ref{prop: group action} to extend our results to general collinear camera arrangements.

\new{We note that it is possible to do similar work for configurations of centers other than generic and collinear ones. For the sake of brevity, we restrict to these two cases in this paper, and leave a more general treatment for future work.}

\subsection{The Indeterminate Collinear Translation Ideal} \label{subsec:collinear-it-ideal}
Here we study an analogue of the $3\times 3$ minor ideal $\IC$ that is defined for a collection of $m\ge 4$ partially-symbolic cameras in a polynomial ring in $4m$ indeterminates,
$$\CC [\bl, \bv ] = \CC [\ell_{1, 1}, \ldots \ell_{3,m}, v_{1} , \ldots , v_{m}].$$
The $3m$ indeterminates $\ell_{i ,j}$ represent homogeneous coordinates on the space of $m$-tuples of lines~$(\PP^2)^m.$ We use the remaining $m$ indeterminates $v_{i}$ to define the tuple of matrices $\mathcal{C}(\bv) = ( C(v_1), \ldots , C(v_m) ) \in (\CC [\bl, \bv]^{3\times 4})^m$
given by
$$
C(v_i) = \begin{bmatrix}
1 & 0 & 0 & v_{i} \\
0 & 1 & 0 & 0 \\
0 & 0 & 1 & 0 
\end{bmatrix}.
$$
The centers of cameras of this form are $(v_i:0:0:-1)$. Following Theorem \ref{thm: collinearextragenerators} we define the \emph{indeterminate collinear translation ideal}, or \emph{ICT ideal} as
$$
\collICT := 
\left\langle 3\times3\text{-minors of } \begin{bmatrix} C(v_1)^T\ell_{1} & \cdots  & C(v_m)^T\ell_{m} \end{bmatrix}\right\rangle + \sum_{\sigma \in \binom{[m]}{4}} \langle\ \det \Phi_{\mathcal C(\bv), f, \sigma}(\ell)\ \rangle .
$$
where the matrix $\Phi_{\mathcal C, f, \sigma}(\ell)$ is as in Theorem \ref{thm: collinearextragenerators}; see also Example \ref{example: computephi}.

Similar to (\ref{eq:product-order-def}) we define $<$ to be the product of GRevLex orders on $\CC [\bl]$ and $\CC [\bv ]$. Let $\widetilde{G}_{\mathcal{C}(v_{\sigma_1}, \ldots, v_{\sigma_k})}$ denote the reduced Gr\"{o}bner basis of $\widetilde{I}_{\mathcal{C}(v_{\sigma_1}, \ldots, v_{\sigma_k})} $ with respect to $<$, corresponding to the subset of cameras indexed by $\sigma$. 
For brevity, we also write $\widetilde{G}_{m}$ or $\widetilde{G}_{\mathcal{C}(\bv)}$ for the Gröbner basis of $\collICT$. 

The next proposition shows that analogous to Theorem \ref{thm:main2}, we get a reduced Gröbner basis by taking the union of smaller Gröbner bases corresponding to all 4-tuples of cameras.

\begin{proposition}\label{prop:collinear-GB}
For any $m\ge 4$, the reduced Gr\"{o}bner basis $\widetilde{G}_{m}$ is equal to the union over all of its restrictions to subsets of $4$ cameras; more precisely,
\begin{equation}\label{eq:m-to-4}
\widetilde{G}_{m} = \displaystyle\bigcup_{\sigma \in \binom{[m]}{4}} \widetilde{G}_{\mathcal{C}(v_{\sigma_1}, \ldots v_{\sigma_4})}.
\end{equation}
We note the following additional properties of $\widetilde{G}_{m}$:
\begin{itemize}
    \item[\textbf{P1}] No element of $\widetilde{G}_{m}$ is divisible by any of the variables $\ell_{1,1} , \ldots , \ell_{3, m}$.
    \item[\textbf{P2}] The leading terms $\mathrm{in}_<(\widetilde{G}_{m})$ are all squarefree monomials.
    \item[\textbf{P3}] Suppose $f (\bv ) \in \CC [\bv]$ is the coefficient of the leading monomial in $\CC [\bl ]$ of some element of $\widetilde{G}_{m}.$ Then $f$ is of one of the four forms listed below:
    \begin{itemize}
        \item[  i.] $f(\bv)= 1$, or
        \item[ ii.] $f(\bv) = v_{a } - v_{b}$, or
        \item[iii.] $f(\bv) = (v_{a } - v_{b }) (v_{c} - v_{d})$
    \end{itemize}
    \end{itemize}
\end{proposition}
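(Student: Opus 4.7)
The argument will closely mirror the proof of \Cref{thm:GB}, with 4-subsets of cameras playing the role that 5-subsets played there. Writing
\[
\widetilde{G}'_m := \bigcup_{\sigma \in \binom{[m]}{4}} \widetilde{G}_{\mathcal{C}(v_{\sigma_1}, \ldots, v_{\sigma_4})},
\]
the goal is to prove $\widetilde{G}'_m = \widetilde{G}_m$. As a preliminary observation, I would record the decomposition
\[
\collICT = \sum_{\sigma \in \binom{[m]}{4}} \widetilde{I}_{\mathcal{C}(v_{\sigma_1}, \ldots, v_{\sigma_4})},
\]
which holds because the $3\times 3$ minors of the rank-drop matrix each involve only three cameras and each generator $\det \Phi_{\mathcal{C}(\bv), f, \sigma}$ involves exactly the four cameras indexed by $\sigma$. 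In particular, every element of $\widetilde{G}'_m$ lies in $\collICT$ and the union generates $\collICT$.

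The base case is to verify the proposition directly for $4 \leq m \leq 8$ using \texttt{Macaulay2}, via a script analogous to \Cref{fig:GB-M2} but adapted to the collinear camera shape and enriched with the extra generators $\det \Phi_{\mathcal{C}(\bv), f, \sigma}$. The cutoff $m \leq 8$ is chosen so that the union of any two 4-subsets of $[m]$ falls within the verified range. For each such $m$, I would check the equality \eqref{eq:m-to-4} as well as the three properties P1, P2, P3.

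For $m \geq 9$, the inductive step proceeds by Buchberger's S-pair criterion, exactly as in \Cref{thm:GB}. Given $g \in \widetilde{G}_{\mathcal{C}(v_\sigma)}$ and $g' \in \widetilde{G}_{\mathcal{C}(v_\pi)}$ with $|\sigma| = |\pi| = 4$, the S-pair $S(g, g')$ is supported on the variables indexed by $\sigma \cup \pi$, which has cardinality at most $8$. By the base case, $\widetilde{G}_{\mathcal{C}(v_{\sigma \cup \pi})}$ is a Gr\"obner basis, so $S(g, g') \to_{\widetilde{G}_{\mathcal{C}(v_{\sigma \cup \pi})}} 0$, and hence also $S(g, g') \to_{\widetilde{G}'_m} 0$ since $\widetilde{G}_{\mathcal{C}(v_{\sigma \cup \pi})} \subseteq \widetilde{G}'_m$. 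Reducedness of $\widetilde{G}'_m$ follows from the same subset-containment principle: for distinct $g, g' \in \widetilde{G}'_m$, both lie in some reduced $\widetilde{G}_{\mathcal{C}(v_\tau)}$ with $|\tau| \leq 8$, so $\mathrm{in}_<(g)$ cannot divide any term of $g'$. Uniqueness of reduced Gr\"obner bases then gives $\widetilde{G}_m = \widetilde{G}'_m$. Properties P1, P2, P3 are inherited directly from the base case, since every element of $\widetilde{G}_m$ appears in some $\widetilde{G}_{\mathcal{C}(v_\sigma)}$ with $|\sigma| = 4$.

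The main obstacle I anticipate is the base case computation: at $m = 7, 8$ the ideal lives in $\CC[\bl, \bv]$ with $4m$ indeterminates and is generated by $4\binom{m}{3}$ cubic minors plus $\binom{m}{4}$ quartic determinants $\det \Phi_{\mathcal{C}(\bv), f, \sigma}$, so the Gr\"obner basis computation may be taxing. A secondary technical point is to carefully inspect the output for each $m$ in the base range to confirm that property P3 is exhaustive, i.e., no leading coefficients in $\CC[\bv]$ of shapes other than those listed in (i)--(iii) ever occur, and to verify P1 and P2 in the same direct manner.
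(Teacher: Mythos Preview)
Your proposal is correct and follows essentially the same approach as the paper: verify the statement and properties \textbf{P1}--\textbf{P3} computationally for $4\le m\le 8$ with \texttt{Macaulay2}, then use Buchberger's S-pair criterion together with the fact that the union of two $4$-subsets has size at most $8$ to conclude for all $m$. The paper's own proof is more terse, simply citing the analogy with \Cref{thm:GB} and \Cref{prop:GB}, but your expanded account of the decomposition of $\collICT$, the reducedness argument, and the inheritance of \textbf{P1}--\textbf{P3} matches it exactly.
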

\begin{proof}[Proof of~\Cref{prop:collinear-GB}]
The statement and its proof are analogous to Theorem \ref{thm:GB} and Proposition \ref{prop:GB}, respectively. As before, we take the following two steps:
\begin{enumerate}
    \item Verify computationally with the \texttt{Macaulay2}~\cite{M2} script from Figure \ref{fig:collGB-M2} that $\widetilde{G}_m$ has the desired form and that $\textbf{P1}, \textbf{P2}, \textbf{P3}$ hold for $4 \leq m \leq 8$. 
    \item Deduce that the statement holds for all $m$ because the S-pairs of two elements in~$\widetilde{G}_{m}$ will never involve more than 8 cameras. \phantom\qedhere \qed
\end{enumerate}
\end{proof}

\begin{remark}
We could analogously choose to define $C(v_i)$ by replacing the column $(v_i,0,0)^T$ by $(0,v_i,0)^T$ or $(0,0,v_i)^T$. However, note that in the last case, we would need to adjust the choice of order on $\CC[\bl,\bv]$.
\end{remark}

\begin{figure}[H]
\begin{lstlisting}[language=Macaulay2]
m = 8
R = QQ[l_(1,1)..l_(3,m), v_1..v_m, MonomialOrder=>{3*m,m}]
linesP2 = for j from 1 to m list matrix{for i from 1 to 3 list l_(i,j)};
cams = for i from 1 to m list id_(R^3) | matrix {{v_i}, {0}, {0}};
rankDropMatrix = matrix{transpose \ apply(linesP2, cams, (l, c) -> l*c)}
centerMatrix = fold(gens \ ker \ cams, (a,b) -> a|b)
veronese = p -> (
    q := flatten entries p; 
    matrix {flatten apply(4, i-> apply(4-i,j -> q_i*q_(i+j)))}
    )
Id = id_(R^4)
baseline = {matrix Id_0, matrix Id_3,  matrix (Id_0 + Id_3)}
f = {matrix Id_1, matrix Id_2,  matrix (Id_1 + Id_2) }
e = apply(4, i -> centerMatrix_i - f_0*transpose(f_1)*rankDropMatrix_i
                                 + f_1*transpose(f_0)*rankDropMatrix_i)
Phi = matrix {transpose  \ veronese  \ join(baseline, f , e)}
F = det Phi
newgens = apply(subsets(m, 4), I -> sub(F, flatten 
                apply(4, i -> join({v_(i+1) => v_(I_i+1)}, 
                apply(3, j-> l_(j+1, i+1) => l_(j+1, I_i+1))))))
ICTm = minors(3, rankDropMatrix) + ideal(newgens)
tildeGm = gb(ICTm)

\end{lstlisting}
\caption{Computing $\widetilde{G}_m$ for $m=8$ in \texttt{Macaulay2}.}\label{fig:collGB-M2}
\end{figure}

\subsection{Specialization to Generic Collinear Translational Cameras}

Following the same argument and notation presented in Section \ref{sec:specialization} we transfer what we know about the ICT ideal to a generic arrangement of collinear cameras. For this, we first define for a fixed $\bu\in\mathbb C^{m}$ and analogous to (\ref{def_eval_phi}) the ring homomorphism
$$\phi_{\bu}:\mathbb C[\bl, \bv] \to \mathbb C[\bl]$$
which evaluates the $\bv$ variables at $\bu$.
The first main result is the following theorem.

\begin{theorem}\label{GB_collinear}
Let $\bu\in \CC^m$ be a vector of distinct numbers, or equivalently such that $\mathcal C(\bu)$ has distinct camera centers. Let $\widetilde{G}_{m}$ be the Gr\"obner basis from \Cref{prop:collinear-GB}. Then, the specialization $\phi_{\bu}(\widetilde{G}_{m})$ is a Gr\"obner basis for the ideal $\speccollICT$.
\end{theorem}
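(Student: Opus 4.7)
The strategy mirrors the proof of \Cref{thm:gb-LMI}: apply the standard specialization result \cite[\S 4.7 Theorem 2]{CLO15}, which guarantees that the image of a Gr\"obner basis under a variable-evaluation homomorphism remains a Gr\"obner basis of the specialized ideal provided no leading coefficient (viewed as a polynomial in the variables being specialized) vanishes at the point of specialization. Here we regard each $g \in \widetilde{G}_m$ as a polynomial in $\CC[\bl]$ with coefficients in $\CC[\bv]$; the leading term under the product order $<$ is (up to a factor in $\CC[\bv]$) the GRevLex-leading $\bl$-monomial of $g$, and the relevant coefficient lies in $\CC[\bv]$.

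The key verification is that each such leading coefficient is nonzero at $\bu$. By property \textbf{P3} of \Cref{prop:collinear-GB}, every such coefficient $f(\bv)$ has one of the three forms $1$, $v_a - v_b$, or $(v_a - v_b)(v_c - v_d)$. Since by hypothesis the entries of $\bu = (u_1,\ldots,u_m)$ are pairwise distinct, each factor $u_a - u_b$ with $a \ne b$ is nonzero, hence $f(\bu) \ne 0$ in all three cases. This is precisely the hypothesis of the CLO15 specialization theorem.

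It therefore follows that $\phi_{\bu}(\widetilde{G}_m)$ is a Gr\"obner basis of the extension $\phi_{\bu}(\collICT)$ with respect to the restriction of $<$ to $\CC[\bl]$, which is GRevLex. Finally, since $\phi_{\bu}$ is a ring homomorphism and $\collICT$ is generated by the $3 \times 3$ minors of $M(\ell)$ together with the determinants $\det \Phi_{\mathcal{C}(\bv), f, \sigma}(\ell)$ over $\sigma \in \binom{[m]}{4}$, applying $\phi_{\bu}$ to these generators yields exactly the generators of $\speccollICT$. Hence $\phi_{\bu}(\collICT) = \speccollICT$, completing the argument.

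I expect no significant obstacle: the entire content is packaged in \textbf{P3}, which was established computationally and via the reduction to $m \le 8$ in the proof of \Cref{prop:collinear-GB}. The only subtlety worth flagging is that, unlike in the translational case where one also had to invoke \Cref{thm:main_ideal} to identify the specialized ideal with the vanishing ideal of the multiview variety, here the statement only asserts that the specialization is a Gr\"obner basis of $\speccollICT$ itself; the identification of this ideal with $I(\mathcal{L}_{\mathcal{C}(\bu)})$ is a separate matter (handled elsewhere via the recognition criterion of \cite{atlas}) and is not part of the claim being proved.
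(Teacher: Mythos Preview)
Your proof is correct and follows essentially the same approach as the paper: invoke the specialization theorem from \cite{CLO15}, use property \textbf{P3} of \Cref{prop:collinear-GB} to see that the leading coefficients in $\bv$ are nonvanishing whenever the $u_i$ are pairwise distinct, and observe that $\phi_{\bu}(\collICT) = \speccollICT$. Your write-up is in fact slightly more detailed than the paper's, and your closing remark correctly anticipates that the identification with $I(\mathcal L_{\mathcal{C}(\bu)})$ is a separate statement handled afterwards.
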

\begin{proof}
It follows from \cite[Theorem 2, p.220]{CLO15} that, if none of the leading coefficients in~$\bv$ that appear $\widetilde{G}_{m}$ vanish at $\bu$, then $\phi_{\bu}(\widetilde{G}_{m})$ the Gr\"{o}bner basis for 
$\phi_{\bu}(\collICT) = \speccollICT.$
Because the camera centers are distinct, none of the leading coefficients presented in the statement of \Cref{prop:collinear-GB} vanish.
\end{proof}

\begin{proposition} \label{lem: collinear-radical}
With $\bu\in\mathbb C^m$ as above, the ideal $\speccollICT$ is radical. 
\end{proposition}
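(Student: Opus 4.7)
The plan is to deduce radicality of $\speccollICT$ from the structure of the Gröbner basis already in hand. By \Cref{GB_collinear}, $\phi_{\bu}(\widetilde{G}_m)$ is a Gröbner basis for $\speccollICT$ with respect to the GRevLex order on $\CC[\bl]$, so it suffices to show that the initial ideal $\mathrm{in}_{\mathrm{GRevLex}}(\speccollICT)$ is a squarefree monomial ideal. From there, radicality of $\speccollICT$ will follow from the classical principle that an ideal whose initial ideal (with respect to some term order) is radical must itself be radical. Indeed, this is immediate from the chain of inclusions $\mathrm{in}_<(I) \subseteq \mathrm{in}_<(\sqrt{I}) \subseteq \sqrt{\mathrm{in}_<(I)}$ together with the fact that $I$ and $\mathrm{in}_<(I)$ share the same Hilbert function, forcing $I = \sqrt{I}$ once $\mathrm{in}_<(I)$ equals its radical.

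To verify that $\mathrm{in}_{\mathrm{GRevLex}}(\speccollICT)$ is squarefree-monomial, I would analyze how each $g \in \widetilde{G}_m$ behaves under the specialization $\phi_{\bu}$. Writing $g = f(\bv)\bl^{\alpha} + (\text{lower } \bl\text{-order terms})$, where $\bl^{\alpha}$ is the GRevLex-leading monomial of $g$ viewed as an element of $\CC[\bv][\bl]$, Property P3 of \Cref{prop:collinear-GB} lists the possible shapes of $f(\bv)$: namely $1$, $v_a - v_b$, or $(v_a - v_b)(v_c - v_d)$. Since the entries of $\bu$ are pairwise distinct, $f(\bu) \ne 0$ in every case, so the GRevLex-leading term of $\phi_{\bu}(g)$ in $\CC[\bl]$ is exactly $f(\bu) \bl^{\alpha}$. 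Property P2 now guarantees that $\bl^{\alpha}$ is squarefree, because the product order on $\CC[\bl, \bv]$ refines GRevLex on $\CC[\bl]$, so squarefreeness of the full product-order leading term $\bl^{\alpha}\bv^{\beta}$ of $g$ forces squarefreeness of the $\bl$-part in particular.

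I do not expect a substantial obstacle here, as all the technical content has been established in \Cref{prop:collinear-GB} and \Cref{GB_collinear}. The only point requiring care is ensuring that the specialization $\phi_{\bu}$ does not accidentally kill the leading coefficient of some element of $\widetilde{G}_m$, and this is precisely what the distinctness of the entries of $\bu$ rules out via P3. Once these pieces are assembled, the radical-initial-ideal principle closes the argument in one line.
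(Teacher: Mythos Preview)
Your proposal is correct and follows essentially the same approach as the paper: the paper's proof simply cites property \textbf{P2} from \Cref{prop:collinear-GB} together with \Cref{GB_collinear} to obtain a Gr\"obner basis with squarefree leading terms, and then invokes the radical-initial-ideal criterion (stated there as~\cite[Proposition~2.2]{atlas}). Your version is more detailed---you spell out why specialization preserves the squarefree leading monomials via \textbf{P3} and unpack the radical criterion via Hilbert functions---but the logical skeleton is identical.
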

\begin{proof}
The property \textbf{P2} in Proposition \ref{prop:collinear-GB} and Theorem \ref{GB_collinear} gives us a Gröbner basis for $\speccollICT$ whose leading terms are squarefree. The result now follows by~\cite[Proposition 2.2]{atlas}.
\end{proof}

Finally, we prove a variant of Theorem \ref{thm:main_ideal} for $m$ collinear cameras.

\begin{theorem}\label{thm:coll-center-LMI}
 Let $m\geq 4,$ and consider a camera arrangement $\mathcal{C}(\bu)$ with $m$ distinct camera centers. Then $ \widetilde{I}_{\mathcal{C}(\bu)}$ is the vanishing ideal of the corresponding multiview variety:
 $$
 \speccollICT = I(\mathcal L_{\mathcal{C}(\bu)}).
 $$  
\end{theorem}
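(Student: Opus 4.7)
The plan is to assemble the three key ingredients already established in this section and in Section~\ref{s: Set-Theory} and apply the multi-projective Nullstellensatz. Since the cameras $C(u_1), \ldots, C(u_m)$ all have centers of the form $(u_i : 0 : 0 : -1)$, they are all collinear, so the index collection $\mathcal{J}$ appearing in the definition~\eqref{eq: Itilde} of $\widetilde{I}_{\mathcal{C}}$ equals $\binom{[m]}{4}$. In particular the definition of $\speccollICT$ here coincides with the specialization of the ICT ideal studied in~\Cref{subsec:collinear-it-ideal}.

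First, I would invoke \Cref{lem: collinear-radical} to conclude that $\speccollICT$ is a radical ideal. This is the payoff of the squarefree leading terms property \textbf{P2} established in \Cref{prop:collinear-GB} for $\widetilde{G}_m$, which is preserved under the specialization $\phi_{\bu}$ thanks to \Cref{GB_collinear} (no leading $\bv$-coefficient from property \textbf{P3} vanishes on distinct~$\bu$).

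Next, I would apply part~(2) of \Cref{lem: collinear-saturated}: since $\speccollICT$ is radical, it is already saturated with respect to the irrelevant ideal $\bigcup_{i=1}^m V(\ell_i)$. At the same time, \Cref{cor: set-theoretic-collinear-ideal} tells us that the zero set of $\speccollICT$ in $(\mathbb{P}^2)^m$ equals $\mathcal{L}_{\mathcal{C}(\bu)}$ set-theoretically.

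Finally, by the multi-projective Nullstellensatz, the vanishing ideal $I(\mathcal{L}_{\mathcal{C}(\bu)})$ is obtained from $\speccollICT$ by first taking the radical and then saturating with respect to the irrelevant ideal. Both operations leave $\speccollICT$ unchanged by the two steps above, yielding $\speccollICT = I(\mathcal{L}_{\mathcal{C}(\bu)})$, as claimed. The main obstacle is really hidden in \Cref{prop:collinear-GB}: everything else in the present proof is a short chain of citations, but it all rests on property~\textbf{P2} (squarefree initial ideal) of the Gröbner basis $\widetilde{G}_m$, whose verification for $4\le m \le 8$ combined with the $S$-pair argument is what makes radicality, and therefore this theorem, available.
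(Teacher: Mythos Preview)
Your argument is correct and follows essentially the same route as the paper: set-theoretic description via \Cref{cor: set-theoretic-collinear-ideal}, radicality via \Cref{lem: collinear-radical}, saturation via \Cref{lem: collinear-saturated}, and then the multi-projective Nullstellensatz. Your additional remark that $\mathcal{J}=\binom{[m]}{4}$ here, so that the general $\widetilde{I}_{\mathcal{C}}$ of~\eqref{eq: Itilde} agrees with the specialized ICT ideal, is a helpful clarification that the paper leaves implicit.
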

 \begin{proof}
We first consider the case of a collinear translational camera arrangement, $\mathcal{C}(\bu) = (C(u_1), \ldots, C(u_m)),$ that has distinct camera centers. To show that $\speccollICT $ is the vanishing ideal, we use a multiprojective form of the Nullstellensatz (see e.g.~\cite[Proposition 2.3]{atlas}).

In \Cref{s: Set-Theory}, we showed that $\speccollICT $ cuts out the variety set-theoretically. We also showed that $\speccollICT  $ is radical (Proposition \ref{lem: collinear-radical}), and thus saturated with respect to the irrelevant ideal (Proposition \ref{lem: collinear-saturated}). Therefore, $\speccollICT$ is the vanishing ideal of the line multiview variety. 
 \end{proof}

\begin{corollary}
    \label{prop: collinear-non-standard}
Any arrangement of collinear cameras $\mathcal{C}$ has $\widetilde{I}_{\mathcal{C}}$ as its vanishing ideal.
\end{corollary}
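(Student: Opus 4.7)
The plan is to reduce to \Cref{thm:coll-center-LMI} via the action of the group $G = \PGL_4 \times \PGL_3^m$ studied in \Cref{sec:group}. For $m \le 3$ the set $\mathcal{J}$ of collinear $4$-subsets of $[m]$ is empty, so $\widetilde{I}_{\mathcal C} = \IC$ and the statement follows directly from \Cref{thm:main_ideal}. So assume $m \ge 4$ and let $E \subset \PP^3$ be the common line containing the distinct centers $c_1, \dots, c_m$ of $\mathcal{C}$.

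The first step is to exhibit an element $h = (H, H_1, \dots, H_m) \in G$ with $h \cdot \mathcal{C} = \mathcal{C}(\bu)$ for some $\bu \in \CC^m$ with distinct entries. Choose $H \in \PGL_4$ sending $E$ to $\Span(e_1, e_4)$ so that each $H c_i$ has the form $(u_i : 0 : 0 : -1)$ with the $u_i \in \CC$ distinct; this is possible by the transitivity properties of $\PGL_4$ acting on $\PP^3$. Writing $C_i H^{-1} = [A_i' \mid b_i'] \in \CC^{3\times 4}$, the kernel relation $C_i H^{-1}(u_i, 0, 0, -1)^T = 0$ forces $b_i' = u_i A_i' e_1$ with $A_i' \in \CC^{3\times 3}$ invertible, and then $H_i := (A_i')^{-1}$ yields $H_i C_i H^{-1} = C(u_i)$.

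The second step chains three ideal identities. By \Cref{thm:coll-center-LMI}, $\speccollICT = I(\mathcal{L}_{\mathcal{C}(\bu)})$. By \Cref{prop: group action}, $I(\mathcal{L}_{\mathcal{C}(\bu)}) = L_h^{-1}(I(\LC))$. Finally, fixing the auxiliary triples $f$ defining $\speccollICT$ to be the $H$-images of those used to define $\widetilde{I}_{\mathcal{C}}$, I claim $\speccollICT = L_h^{-1}(\widetilde{I}_{\mathcal{C}})$: the rank-condition minor generators transform correctly because $L_h$ carries the matrix $M(\ell)$ associated to $h\cdot \mathcal{C}$ to $H^{-T} M(\ell)$, whose $3\times 3$ minors are invertible linear combinations of those of $M(\ell)$ by Cauchy--Binet; and \Cref{lem: action-det-phi} identifies the determinantal generators up to the nonzero scalar $\det(H)^2$. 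Combining the three identities and applying $L_h$ yields $\widetilde{I}_{\mathcal{C}} = I(\LC)$.

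The main bookkeeping step is the compatible transport of the auxiliary line $E_\sigma^*$ and its triple of distinguished points $f$ along $H$, so that \Cref{lem: action-det-phi} applies verbatim to each collinear $\sigma$. This is the only technical compatibility that needs care, but it presents no genuine obstacle since \Cref{lem: action-det-phi} is precisely designed to handle this transport.
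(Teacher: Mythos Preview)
Your argument follows the same strategy as the paper's: reduce to the translational-collinear normal form via the $G$-action and then combine \Cref{thm:coll-center-LMI}, \Cref{prop: group action}, and \Cref{lem: action-det-phi}. The paper's proof is terser but structurally identical, including the step of writing the transformed cameras as $[A_i\mid t_i]$ and inverting $A_i$.

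One small point deserves more care. You propose to ``fix the auxiliary triples $f$ defining $\speccollICT$ to be the $H$-images of those used to define $\widetilde{I}_{\mathcal C}$''. But \Cref{thm:coll-center-LMI} is only established for the \emph{specific} choice of $E_\sigma^*$ and $f$ hard-coded into the \texttt{Macaulay2} computation of~\Cref{prop:collinear-GB}: the radicality statement \Cref{lem: collinear-radical} depends on the squarefree initial ideal of that particular Gr\"obner basis, so you are not free to change $f$ on the $\speccollICT$ side. The transport must go the other way: choose the auxiliary line and triple defining $\widetilde{I}_{\mathcal C}$ to be the $H^{-1}$-preimage of the standard data, so that $h\cdot f$ lands on the standard $f$ and \Cref{lem: action-det-phi} yields $\speccollICT = L_h^{-1}(\widetilde{I}_{\mathcal C})$ for the $\speccollICT$ to which \Cref{thm:coll-center-LMI} actually applies. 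With that orientation, your chain of identities goes through exactly as written. (Both your proof and the paper's then establish the corollary for that transported choice of auxiliary data; independence of $\widetilde{I}_{\mathcal C}$ from the choice of $f$ is not explicitly argued in the paper either.)
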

\begin{proof}
Up to $G$-equivalence, we prove that collinear cameras have the form 
\begin{align}\label{eq: Gequivform}
C(u_i) = \begin{bmatrix}
1 & 0 & 0 & u_{i} \\
0 & 1 & 0 & 0 \\
0 & 0 & 1 & 0 
\end{bmatrix}.
\end{align}
Combining Theorem \ref{thm:coll-center-LMI}, Proposition \ref{prop: group action} and Lemma \ref{lem: action-det-phi} the result follows. 

To begin with, we use a matrix $H \in \PGL_4$ to transform any collinear camera arrangement into a form where the centers are of the form $(u_i:0:0:-1)$. Then, fix a camera $C_i=[A_i|t_i]$ in the arrangement
, where $A_i\in \CC^{3\times 3}$ and $t_i\in \CC^3$. Since the kernel is of the form $(u_i:0:0:-1)$, $t_i$ is a scaling of the first column of $A_i$. So if $\det(A_i)=0$, then $C_i$ would be at most of rank 2. It follows that $A_i\in \mathrm{GL}_3$ and $A_i^{-1}C_i$ is of the form \eqref{eq: Gequivform}. We are now done by letting $h=(H,A_1^{-1},\ldots,A_m^{-1})$. 
 \end{proof}


\medskip
\bibliographystyle{amsplain}
\bibliography{GBbib.bib}
\end{document}